\newtheorem{thm}{Theorem}[section]
\newtheorem{cor}[thm]{Corollary}
\newtheorem{lem}[thm]{Lemma}
\newtheorem{prop}[thm]{Proposition}
\newtheorem*{lem*}{Lemma}
\theoremstyle{definition}
\theoremstyle{remark}
\newtheorem{rmk}[thm]{Remark}
\newtheorem{example}[thm]{Example}
\numberwithin{equation}{section}
\newcommand{\NN}{\mathbb{N}}
\newcommand{\RR}{\mathbb{R}}
\newcommand{\TT}{\mathbb{T}}
\newcommand{\CC}{\mathbb{C}}
\def\Cc{\mathcal{C}}
\def\Ccrit{\mathcal{C}_{\operatorname{crit}}}
\def\Cmcrit{\mathcal{C}_{\operatorname{mincrit}}}
\newcommand{\KMS}{\text{KMS}}
\newcommand{\TC}{\mathcal{T}C}
\newcommand{\Aut}{\operatorname{Aut}}
\newcommand{\QE}{\operatorname{QE}}
\newcommand{\clsp}{\overline{\text{span}}}
\newcommand{\lspan}{\operatorname{span}}
\newcommand\mc{\mathrm{mc}}
\date{January 2, 2018}
\begin{document}

\author[James Fletcher]{James Fletcher}
\author[Astrid an Huef]{Astrid an Huef}
\author[Iain Raeburn]{Iain Raeburn}
\email{james.fletcher, astrid.anhuef, iain.raeburn@vuw.ac.nz}
\address{School of Mathematics and Statistics, Victoria University of Wellington, P.O. Box 600, Wellington 6140, New Zealand.}

\title[KMS states on higher-rank graph algebras]{A program for finding all KMS states\\on the Toeplitz algebra of a higher-rank graph}

\thanks{This research was supported by the Marsden Fund of the Royal Society of New Zealand.}

\begin{abstract}
The Toeplitz algebra of a finite graph of rank $k$ carries a natural action of the torus $\TT^k$, and composing with an embedding of $\RR$ in $\TT^k$ gives a dynamics on the Toeplitz algebra. For inverse temperatures larger than a critical value, the KMS states for this dynamics are well-understood, and this analysis is the first step in our program. At the critical inverse temperature, much less is known, and the second step in our program is an analysis of the KMS states at the critical value. This is the main technical     contribution of the present paper.  The third step shows that the problem of finding the states at inverse temperatures less than the critical value is equivalent to our original problem for a smaller graph. Then we can tackle this new problem using the same three steps, and repeat if necessary. So in principle, modulo some mild connectivity conditions on the graph, our results give a complete description of the simplex of KMS states at all inverse temperatures. We test our program on a wide range of examples, including a very general family of graphs with three strongly connected components.
\end{abstract}
\keywords{$C^*$-algebras, higher-rank graphs, KMS states}
\subjclass[2010]{46L30, 46L55}
\maketitle

\section{Introduction}

The graphs of higher-rank $k$ (the $k$-graphs) were introduced by Kumjian and Pask \cite{KP} as combinatorial models for the higher-rank Cuntz--Krieger algebras of Robertson and Steger \cite{RobS}. To each $k$-graph $\Lambda$ we associate two $C^*$-algebras: a graph $C^*$-algebra $C^*(\Lambda)$ \cite{KP} and a Toeplitz algebra $\TC^*(\Lambda)$ \cite{RS}. Both algebras carry natural gauge actions of the torus $\TT^k$. Composing these gauge actions with an embedding $t\mapsto e^{itr}$ of $\RR$ in $\TT^k$ gives actions $\alpha^r$ of $\RR$ on $C^*(\Lambda)$ and $\TC^*(\Lambda)$ (the dynamics). Then one naturally wonders about the KMS states of these dynamics, and, as usual, the results turn out to be interesting \cite{aHLRS2, aHLRS3, aHKR, aHKR2}.

We assume throughout that $\Lambda$ is a finite $k$-graph with no sources or sinks. For inverse temperatures $\beta$ larger than a critical value $\beta_c$, the system $(\TC^*(\Lambda),\RR,\alpha^r)$ has a concretely described simplex of KMS$_\beta$ states with extreme points parametrised by the vertices in the graph \cite[Theorem~6.1]{aHLRS2}. The critical value $\beta_c$ is determined by the spectral radii $\{\rho(A_i):1\leq i\leq k\}$ of the vertex matrices $\{A_i:1\leq i\leq k\}$ of $\Lambda$ as
\[
\beta_c=\max\{r_i^{-1}\ln\rho(A_i):1\leq i\leq k\}.
\]
For the ``preferred dynamics'' in which $r_i=\ln\rho(A_i)$ for all $i$, the critical inverse temperature is $\beta_c=1$. 

Roughly speaking, when $C^*(\Lambda)$ is simple and the dynamics is not periodic,  there is a unique KMS$_{1}$ state on $\TC^*(\Lambda)$  (\cite[Theorem~7.2]{aHLRS2}, \cite[Theorem~5.1(d)]{aHKR});  when the dynamics is preferred  it factors through a KMS$_1$ state of $C^*(\Lambda)$. 

This interesting topic has been picked up by a number of authors from different points of view. In particular, Yang \cite{Yang3} and Laca--Larsen--Neshveyev--Sims--Webster \cite{LLNSW} have computed the types of the von Neumann algebras associated to these KMS states, finding that the type depends only on the skeleton of the $k$-graph. Farsi, Gillaspy, Kang and Packer \cite{FGKP} have investigated spatial realisations of these KMS states, and found connections with constructions of wavelets. McNamara \cite{M} has extended the results of \cite{aHLRS2} to more general dynamics. The shifts on the infinite path spaces of higher-rank graphs provided examples of $*$-commuting local homeomorphisms which informed our analysis with Afsar \cite{AaHR2}. And, very recently, the uniqueness of the KMS$_1$ state for the preferred dynamics on the $C^*$-algebra of a simple graph has been confirmed by Christensen \cite{C} using work of Neshveyev  \cite{N} on KMS states of groupoid $C^*$-algebras.

Simplicity of a graph algebra $C^*(\Lambda)$ is largely determined by two properties: $\Lambda$ should be aperiodic and  irreducible  \cite{Robertson-Sims}. In \cite{aHLRS3}, we studied the KMS states of $C^*(\Lambda)$ for periodic irreducible graphs, and we found that the behaviour of the KMS states at the critical inverse temperature $1$ is dictated in a very concrete way by the periodicity \cite[Theorem~7.1]{aHLRS3}.  We recently investigated graphs with several irreducible components which are themselves aperiodic \cite{aHLRS4, aHKR2}. We were pleasantly surprised that we were able to get rather complete descriptions of the KMS states for $1$-graphs \cite[Theorem~5.3]{aHLRS4}, and that we were then able to extend many of the key arguments of \cite{aHLRS4} to higher-rank graphs. Our general results were strong enough to describe all the KMS$_1$ states for the preferred dynamics on graphs with one or two components \cite[\S7]{aHKR2}.

Unfortunately, even for the preferred dynamics, we ran into new difficulties  when the graph had three components. The program of \cite{aHKR2} involves reducing problems to smaller graphs by describing what happens when we remove hereditary components. The Toeplitz algebras of these smaller graphs are quotients of the Toeplitz algebra $\TC^*(\Lambda)$. However, the dynamics on the quotient induced by the preferred dynamics on $\TC^*(\Lambda)$ need not be the preferred one. For graphs with two components, the quotient is irreducible, and the analysis of \cite{aHKR} was available. For graphs with three components, the quotient graph could have two components. So we were forced to rejig our general results to accommodate a non-preferred dynamics, and that is what we report on here. We always scale our dynamics to ensure that $\beta_c=1$, and we are interested primarily in the KMS$_1$ states. And we can then bootstrap our techniques to deal with $\beta<1$.

Our approach has been to develop a collection of results which allow us to reduce the number of components. Some of these results were already available in some form in our previous articles, and for them our new contribution has been to finesse the arguments to allow for a non-preferred dynamics. (This is the case, for example, for our new Proposition~\ref{estnonpref} and Theorem~\ref{dominant2cpt}.) But some of our ingredients are quite different, and we have been very pleasantly surprised at how well they work. For example, Theorem~\ref{orderrho} and the results in \S\ref{partial} have no counterparts in our previous papers. 

These results are intended to be used in tandem, and in \S\ref{procedure} we describe a general procedure for computing all the KMS$_1$ states of a very large class of higher-rank graphs. We do have to place some additional hypotheses on our graphs, but these are mainly to ensure that the components themselves are tractable, and that we do not create sources when we remove components. (We saw in \cite[\S8]{aHKR2} that pretty strange things can happen when we remove a component.) We then devise a program that finds all KMS states on $\TC^*(\Lambda)$ at all inverse temperatures.  We are delighted to report that we have not hit any new roadblocks to our program. We have in particular tested our program for a non-preferred dynamics on the Toeplitz algebras of graphs with three components, with very satisfactory results (see \S\ref{sec3cpts} and \S\ref{examples}).

\subsection*{Outline}

We begin in \S\ref{background} by summarising the necessary background material on higher-rank graphs, their associated $C^*$-algebras, and $\KMS$ states. We then set about adapting the main general results from \cite{aHKR2}. In \S\ref{critcpts} we show how to identify and remove redundant components from our graphs. The key point is that, provided the graph satisfies some mild connectivity hypotheses, we may replace our original graph with a smaller graph in which every critical component is hereditary, and not lose any $\KMS_1$ states in the process.

We begin \S\ref{extendPF} by showing how we may extend the Perron--Frobenius eigenvector for the vertex matrices of a hereditary component of the graph to an eigenvector of the full vertex matrix. This result is linear-algebraic in nature, and its proof involves recasting the proof of  \cite[Proposition~6.1]{aHKR2} to minimise the use of (and implicitly the existence of) inverses of matrices. But we were very surprised to discover Theorem~\ref{orderrho} and Corollary~\ref{orderrho2}, which show (again subject to a mild connectivity hypothesis) how the presence of a hereditary component which is critical for one colour influences the ordering of the spectral radii for the other colours.  However, the proof of Theorem~\ref{orderrho} was sufficiently indirect to make us nervous, and in the end we worked hard to find a direct proof for graphs with only 3 vertices, which can be found in Appendix~\ref{app3vertex}. 

In \S\ref{sec:dom} we extend the main result \cite[Theorem~6.5]{aHKR2} from our previous paper to the situation where the dynamics are non-preferred. When $\Lambda$ has a critical hereditary component $D$ that dominates all other components --- in the sense that all the spectral radii of $\Lambda_D$ dominate the spectral radii of every other component  --- Theorem~\ref{dominant2cpt} completely describes the $\KMS_1$ states of $\TC^*(\Lambda)$. This result enables us to deal with graphs that have one or two components (with potentially non-preferred dynamics), but cannot be used if the graph has two or more critical components. 

In \S\ref{partial} we prove a completely new theorem which allows us to handle graphs with several components that are critical and hereditary. In \S\ref{procedure} we combine all of our results to provide a procedure for finding all of the $\KMS_1$ states on the Toeplitz algebra of a reducible higher-rank graph. In \S\ref{program} we describe our program for finding the KMS$_\beta$ states at all inverse temperatures $\beta$. In \S\ref{apps} we test our program on  graphs with two components (generalising \cite{aHKR2}, where the dynamics was the preferred one) and three components. Then we check that for $1$-graphs our results are compatible with what we already know for directed graphs \cite{aHLRS4}, and  we explicitly compute the KMS states for the Toeplitz algebras of two specific dumbbell $2$-graphs with three single-vertex components.

\section{Background}\label{background}

\subsection{Higher-rank graphs}
A higher-rank graph of rank $k$ (or $k$-graph) consists of a countable category $\Lambda$ and a degree functor $d:\Lambda\rightarrow \NN^k$ satisfying the following factorisation property: if $\lambda\in \Lambda$ and $d(\lambda)=m+n$ for some $m,n\in \NN^k$, then there are unique $\mu,\nu\in \Lambda$ such that $d(\mu)=m$, $d(\nu)=n$ and $\lambda=\mu\nu$.  

We use the following standard notation when working with $k$-graphs. For $n\in \NN^k$, we write $\Lambda^n:=d^{-1}(n)$, and call elements of $\Lambda^n$ paths of degree $n$; in particular, elements of $\Lambda^0$ are called vertices. The factorisation property implies that we can identify the vertices  with the objects in the category. We write $s,r:\Lambda\rightarrow \Lambda^0$ for the domain and codomain maps. For $E\subset \Lambda$ and $V\subset\Lambda^0$ we set $VE:=\{\lambda\in E: r(\lambda)\in V\}$ and $vE:=\{v\}E$, and similarly on the right. We write $\{e_i:1\leq i \leq k\}$ for the usual generators of $\NN^k$, and $m\vee n$ for the pointwise maximum of $m,n\in \NN^k$. 

In this paper all $k$-graphs are finite in the sense that $\Lambda^n$ is finite for each $n\in \NN^k$. We also assume that our graphs have no sinks and no sources, in the sense that $v\Lambda^n$ and $\Lambda^n v$ are both non-empty for all $v\in \Lambda^0$ and $n\in\NN^k$.    

For $1\leq i\leq k$ we write $A_i$ for the $\Lambda^0\times \Lambda^0$ matrix with entries 
\[A_i(v,w):=|v\Lambda^{e_i}w|.\]
 The factorisation property implies that $A_iA_j=A_jA_i$ for each $1\leq i,j\leq k$. Then for each $n\in \NN^k$,  the matrix $A^n:=\prod_{i=1}^k A_i^{n_i}$ is well defined and has entries given by $A^n(v,w)=|v\Lambda^n w|$. For $C,D\subset \Lambda^0$, we write $A_{C,D,i}$ for the $C\times D$ matrix with entries $A_{C,D,i}(v,w)=A_i(v,w)$ for $v\in C$, $w\in D$, and we write $A_{C,i}:=A_{C,C,i}$. 

The skeleton is the coloured directed graph $(\Lambda^0, \Lambda^1:=\bigcup_{i=1}^k \Lambda^{e_i},r,s)$ in which the edges of degree $e_i$ have been assigned one of $k$ different colours. The relationship between $k$-graphs and skeletons is discussed in \cite[Chapter~10]{R}, \cite[Section~2]{RSY1}, and \cite{HRSW}. When $k=2$,  we think of the elements of $\Lambda^{e_1}$ as blue/solid edges and elements of $\Lambda^{e_2}$ as red/dashed edges. In any $k$-graph, the factorisation property gives bijections $\theta_{i,j}$ of the sets $\{ef:e\in \Lambda^{e_i}, f\in \Lambda^{e_j}\}$ onto $\{gh:g\in \Lambda^{e_j}, h\in \Lambda^{e_i}\}$. However, there are coloured graphs for which there exist such bijections that are not the skeletons of any $k$-graph (see  \cite[Example 5.15(ii)]{KPS}). Fortunately, when $k=2$ the situation is simple: a $2$-coloured graph is the skeleton of a $2$-graph if and only if $A_1 A_2=A_2 A_1$ \cite[Section~6]{KP}. (Though then it is usually the skeleton of many $2$-graphs.)

\subsection{The $C^*$-algebras of higher-rank graphs}
We consider a finite $k$-graph $\Lambda$ with no sinks and no sources. For $\lambda,\mu \in \Lambda$, we define 
\[
\Lambda^{\min}(\lambda,\mu):=\{(\alpha,\beta)\in \Lambda\times \Lambda: \lambda\alpha=\mu\beta\in \Lambda^{d(\lambda)\vee d(\mu)}\}.
\] A collection of partial isometries $\{T_\lambda:\lambda\in\Lambda\}$ in a $C^*$-algebra $B$ is a Toeplitz--Cuntz--Krieger $\Lambda$-family if 
\begin{enumerate}
\item[(T1)] $\{Q_v:=T_v:v\in \Lambda^0\}$ is a collection of mutually orthogonal projections;
\item[(T2)] $T_\lambda T_\mu=T_{\lambda\mu}$ for each $\lambda,\mu\in \Lambda$ with $s(\lambda)=r(\mu)$;
\item[(T3)] $T_\lambda^* T_\mu=\sum_{(\alpha,\beta)\in \Lambda^{\min}(\lambda,\mu)}T_\alpha T_\beta^*$ for all $\lambda,\mu \in \Lambda$. 
\end{enumerate}  
Relation (T3) implies that $T_\lambda^*T_\lambda=Q_{s(\lambda)}$ for  $\lambda\in \Lambda$, and that for each $n\in \NN^k$, the projections $\{T_\lambda T_\lambda^*:\lambda\in \Lambda^n\}$ are mutually orthogonal. Combining this with relation (T2) shows that $Q_v\geq \sum_{\lambda\in v\Lambda^n}T_\lambda T_\lambda^*$ for each $n\in \NN^k$. The relations also imply that $C^*(\{T_\lambda:\lambda\in\Lambda\})=\clsp \{T_\lambda T_\mu^*:\lambda,\mu \in \Lambda\}$. 

The Toeplitz $C^*$-algebra $\TC^*(\Lambda)$ is generated by a universal Toeplitz--Cuntz--Krieger $\Lambda$-family $\{t_\lambda:\lambda\in\Lambda\}$ \cite[\S7]{RS}. The Cuntz--Krieger (or graph) algebra  $C^*(\Lambda)$ is the quotient of $\TC^*(\Lambda)$ in which $q_v=\sum_{\lambda\in v\Lambda^n}t_\lambda t_\lambda^*$ for all $v\in \Lambda^0$ and $n\in \NN^k$.

For $z=(z_1,\dots,z_k)\in \TT^k$ and $n\in \NN^k$, we write $z^n:=\prod_{i=1}^kz_i^{n_i}$.  There is a strongly continuous gauge action $\gamma:\TT^k\rightarrow \Aut(\TC^*(\Lambda))$ satisfying $\gamma_z (t_\lambda)=\prod_{i=1}^k z_i^{d(\lambda)_i}t_\lambda$ for  $z\in \TT^k$ and $\lambda\in \Lambda$. Since $\gamma_z$ fixes each $q_v-\sum_{\lambda\in v\Lambda^n}t_\lambda t_\lambda^*$, this action induces an action on $C^*(\Lambda)$, which we also denote by $\gamma$.

\subsection{Hereditary sets and strongly connected components}
A subset $H\subset \Lambda^0$ is hereditary if $v\in H \text{ and } v\Lambda w\neq \emptyset \Longrightarrow w\in H$, and forwards hereditary if $w\in H \text{ and } v\Lambda w\neq \emptyset \Longrightarrow v\in H$. If $H$ is hereditary then $\Lambda\backslash H:=\{\lambda\in \Lambda:s(\lambda)\not\in H\}$ is a $k$-graph (the key point is that $\mu\nu\in \Lambda\backslash H$ implies $\mu,\nu\in \Lambda\backslash H$). If $I_H$ is the ideal of $\TC^*(\Lambda)$ generated by $\{q_v:v\in H\}$, then $\TC^*(\Lambda\backslash H)$, $\TC^*(\Lambda)/I_H$, and $C^*(\{t_\lambda:s(\lambda)\not\in H\})\subset \TC^*(\Lambda)$ are all canonically isomorphic (see \cite[Proposition~2.2]{aHKR2}). Thus the Toeplitz algebra of the subgraph $\Lambda\backslash H$ can be realised as both a quotient and a subalgebra of $\TC^*(\Lambda)$. 

There is an equivalence relation $\sim$ on $\Lambda^0$ such that 
\[
v\sim w \Longleftrightarrow v\Lambda w\not= \emptyset\text{ and }w\Lambda v \neq \emptyset,
\]
 and  the strongly connected components of $\Lambda$ are the equivalence classes for this relation. If $v\Lambda v=\{v\}$ for a vertex $v$ then $\{v\}$ is a strongly connected component, and we call such classes trivial components. We write $\mathcal{C}$ for the set of nontrivial strongly connected components. 
For $C\in\mathcal{C}$ we set $\Lambda_C:=C\Lambda C$. It is routine to check that if $\mu\nu\in \Lambda_C$, then $\mu,\nu\in \Lambda_C$, and so $\Lambda_C$ is a $k$-graph. 

An $n\times n$ matrix $A$ is irreducible if for every $1\leq i,j\leq n$ there exists $m\in \NN$ such that $A^m(i,j)\neq 0$. If $A$ is the vertex matrix of a directed graph $E$, then $A$ is irreducible if and only if  $E$ is strongly connected. Following \cite{aHKR2}, we say that the subgraph $\Lambda_C$ is coordinatewise irreducible if all the matrices $A_{C,i}$ are irreducible. Then \cite[Lemma~2.1]{aHLRS2} says that the matrices $\{A_{C,i}:1\leq i\leq k\}$ have a common unimodular Perron--Frobenius eigenvector. 

If the graph has no   sources or sinks, and each $\Lambda_C$ is coordinatewise irreducible, \cite[Proposition~3.1]{aHKR2} says that we can order $\Lambda^0$ so that each vertex matrix $A_i$ is block upper triangular,  the diagonal blocks are $\{A_{C,i}:C\in \mathcal{C}\}$, and the other blocks are strictly upper triangular matrices. Hence for every $1\leq i\leq k$, the spectral radius  of $A_i$ is $\rho(A_i)=\max\{\rho(A_{C,i}):C\in \mathcal{C}\}$. 

\subsection{$\KMS$ states}

Suppose that $\alpha$ is an action of $\RR$ on a $C^*$-algebra $A$. An element $a\in A$ is analytic for the action $\alpha$ if the map $t\mapsto \alpha_t(a)$ extends to an analytic function on $\CC$. A state $\phi$ of $A$ is a $\KMS$ state with inverse temperature $\beta\in (0,\infty)$ (or a $\KMS_{\beta}$ state) if $\phi(ab)=\phi(b\alpha_{i\beta}(a))$ for all analytic elements $a,b\in A$. By \cite[Proposition~8.12.3]{Ped} it suffices to verify that the state $\phi$ satisfies the KMS condition on a set of analytic elements which span a dense invariant subset of $A$. 

\subsection{The dynamics on $\TC^*(\Lambda)$}
Let $\Lambda$ be a $k$-graph.  Choosing $r\in (0,\infty)^k$ gives a dynamics $\alpha^r: \RR\rightarrow \Aut(\TC^*(\Lambda))$ by 
\[\alpha^r_t=\gamma_{e^{itr}}:=\gamma_{(e^{itr_1},\ldots, e^{itr_k})}.\]
 For $\lambda,\mu\in \Lambda$, the map $t\mapsto\alpha_t^r(t_\lambda t_\mu^*)=e^{itr\cdot (d(\lambda)-d(\mu))}t_\lambda t_\mu^*$ is the restriction of the analytic function $z\mapsto e^{izr\cdot (d(\lambda)-d(\mu))}t_\lambda t_\mu^*$, and hence $t_\lambda t_\mu^*$ is an analytic element. Since $\lspan\{t_\lambda t_\mu^*:\lambda,\mu \in \Lambda\}$ is dense in $\TC^*(\Lambda)$,  it suffices to check the $\KMS_\beta$ condition on pairs of elements of the form $t_\lambda t_\mu^*$. The action $\alpha^r$ induces an action, also denoted $\alpha^r$, on the quotient $C^*(\Lambda)$. We say that a $\KMS_\beta$ state $\phi$ of $\TC^*(\Lambda)$ factors through $C^*(\Lambda)$ if there exists a $\KMS_\beta$ state $\psi$ of $C^*(\Lambda)$ such that $\phi=\psi\circ q$, where $q$ is the quotient map. 

In Theorem~\ref{dominant2cpt} and beyond, we  assume that the coordinates of the vector $r$ are rationally independent. This implies  that $t\mapsto e^{itr}$ is an embedding of $\RR$ in $\TT^k$. (To see this, suppose that  $e^{itr}=1$. Then there are integers $n_i$ such that $tr_i=2\pi n_i$. But then $r_in_i^{-1}=r_jn_j^{-1}$ and we have $n_jr_i-n_ir_j=0$, which contradicts that the coordinates of $r$ are rationally independent.) It then follows  that the dynamics $\alpha^r$ is not periodic.

 \subsection{Scaling the dynamics}
 We usually scale the dynamics (that is, multiply $r$ by a scalar, which changes the inverse temperature but does not otherwise affect the dynamics \cite[\S2.1]{aHKR}) to ensure that $\beta_c:=\max\{r_i^{-1}\ln\rho(A_i):1\leq i\leq k\}$ is $1$. 
 
 \subsection{The preferred dynamics}
Suppose that $\Lambda$ is a coordinatewise-irreducible finite $k$-graph. Then by \cite[Corollary~4.4 and Theorem~7.2]{aHLRS2}, there is a KMS$_1$ state of $(C^*(\Lambda),\alpha^r)$ if and only if $r_i=\ln\rho(A_i)$ for $1\leq i\leq k$. 
This means $\alpha^r$ is the preferred dynamics, which is characterised by 
\[
\alpha^r(t_\lambda)=\rho(A)^{d(\lambda)}t_\lambda:=\prod_{i=1}^k\rho(A_i)^{d(\lambda)_i}t_\lambda.
\] 
For the preferred dynamics, if the coordinates of $r$ are rationally independent, then $(\TC^*(\Lambda),\alpha^r)$ has a unique KMS$_1$ state, and it factors through a state of $(C^*(\Lambda),\alpha^r)$ \cite[Theorem~7.2]{aHLRS2}.
So if we were interested in KMS$_\beta$ states on $(C^*(\Lambda),\alpha^r)$, then we would only be interested\footnote{Curiously, in her study of $k$-graphs with a single vertex, Yang also arrived at the preferred dynamics, but for different reasons (see \cite[Proposition~5.4]{Yang1}).} in the preferred dynamics and the inverse temperature $\beta=1$.

\section{Removing redundant components}\label{critcpts}

We consider a finite $k$-graph with no sources or sinks.
We suppose that $r\in (0,\infty)^{k}$ satisfies 
\begin{equation}\label{assondyn}
\max\{r_i^{-1}\ln\rho(A_i):1\leq i\leq k\}=1;
\end{equation}
equivalently,
\begin{equation*}
r_i\geq \ln\rho(A_i)\text{ for all $i$, and }K:=\{i:r_i^{-1}\ln\rho(A_i)=1\}\not=\emptyset.
\end{equation*}
 Here it is important that $K$ may be a proper subset of $\{1,\dots,k\}$, in which case the dynamics  $\alpha^r$ on $\TC^*(\Lambda)$ is not the preferred dynamics studied in \cite{aHKR2}. 

In this section we salvage what we can from the arguments used to prove \cite[Theorem~5.1]{aHKR2}. We focus on the  critical components: a nontrivial strongly connected component $C\in\Cc$ such that $r_j=\ln\rho(A_{C,j})$ for some $j\in K$ and $A_{C,j}$ is irreducible.  We then say that $C$ is $j$-critical. The next result says that KMS$_1$ states do not  see many vertices which feed into critical components.

\begin{prop}\label{estnonpref}
Suppose that $\Lambda$ is a finite $k$-graph with no sources or sinks, and that $r\in (0,\infty)^k$ satisfies \eqref{assondyn}.
 Suppose that $C\in \Cc$ is a $j$-critical component of $\Lambda$, and set \[\Sigma_jC:=\{w\in \Lambda^0:C\Lambda^{\NN e_j}w\not=\emptyset\}.\] Then for every KMS$_1$ state $\psi$ on $(\TC^*(\Lambda),\alpha^r)$, we have $\psi(q_w)=0$ for all $w\in H_j:=(\Sigma_jC)\backslash C$.
\end{prop}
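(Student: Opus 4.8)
The plan is to exploit the defining properties of a $j$-critical component—namely that $A_{C,j}$ is irreducible with $\rho(A_{C,j}) = e^{r_j}$—together with the subinvariance estimates that KMS$_1$ states must satisfy. First I would record the standard fact (coming from the analysis underlying \cite[Theorem~6.1]{aHLRS2}, and available to us since $t_\mu t_\mu^*$ is analytic) that a KMS$_1$ state $\psi$ of $(\TC^*(\Lambda),\alpha^r)$ satisfies, for every $v \in \Lambda^0$ and every $n \in \NN^k$,
\[
\psi(q_v) = \sum_{\mu \in v\Lambda^n}\psi(\alpha^r_{i}(t_\mu)^* t_\mu)\cdot(\text{weight}) \;\geq\; \sum_{\mu \in v\Lambda^n} e^{-r\cdot n}\,\psi(q_{s(\mu)}),
\]
i.e. writing $m = (\psi(q_v))_{v\in\Lambda^0}$ as a row vector, $e^{-r\cdot n} m A^n \leq m$ for all $n$, or equivalently $m A_i \leq e^{r_i} m$ coordinatewise for each $i$. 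This is the ``subinvariance'' that every KMS$_1$ state obeys; it is the one piece of external input, and I would state it as a lemma or quote it.

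The heart of the argument is then the colour-$j$ restriction. Fix the nonnegative row vector $m$ and restrict attention to the index $j$: we have $m A_j \leq e^{r_j} m$. Now sum this inequality over the vertices of $C$, i.e. pair with the indicator of $C$, or better, multiply on the right by the Perron–Frobenius column eigenvector $\kappa$ of $A_{C,j}^T$ for its maximal eigenvalue $\rho(A_{C,j}) = e^{r_j}$ (positive on $C$ by irreducibility, and extended by $0$ off $C$). Because $C$ is hereditary-like only downstream, I should be careful: the sum $\sum_{w} m_w (A_j\kappa)_w$ splits as the contribution from $w \in C$, where $A_{C,j}\kappa = e^{r_j}\kappa$ exactly, plus the contribution from $w \notin C$ with $w \Lambda^{e_j} C \neq \emptyset$, which is nonnegative. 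The subinvariance $m A_j \leq e^{r_j}m$ forces this extra nonnegative term to vanish, hence $m_w = 0$, i.e. $\psi(q_w) = 0$, for every $w \notin C$ with a colour-$j$ edge into $C$.

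To upgrade ``a single colour-$j$ edge into $C$'' to ``a colour-$j$ path into $C$ of any length'' — which is exactly membership in $\Sigma_j C$ — I would iterate: having shown $\psi(q_w) = 0$ for all $w \in (\Lambda^{e_j} C)^0 \setminus C$ (vertices one $e_j$-step from $C$), apply the same pairing argument with the eigenvector supported now on $C \cup \{$those vertices$\}$, or more simply induct on the length $p$ of the path in $C\Lambda^{pe_j}w \neq \emptyset$: if every vertex reaching $C$ by a colour-$j$ path of length $<p$ already has $\psi(q_\cdot) = 0$, then for $w$ reaching $C$ in exactly $p$ steps, factor $C\Lambda^{pe_j}w = C\Lambda^{(p-1)e_j}\Lambda^{e_j}w$ and use that the intermediate vertex $u$ (which reaches $C$ in $p-1$ steps, hence has $\psi(q_u)=0$) together with the subinvariance $m A_j \le e^{r_j} m$ pins down $m_w$. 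The cleanest packaging is probably: let $W$ be the set of vertices reaching $C$ by some colour-$j$ path; $W$ is invariant under ``pull back along $e_j$-edges'', $m A_{W,j} \le e^{r_j} m_W$ where $m_W$ is $m$ restricted to $W$, and $A_{W,j}$ restricted suitably has Perron value $e^{r_j}$ realised on $C$; a single positive-eigenvector pairing then kills all of $m_W$ off $C$ at once.

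The main obstacle is bookkeeping rather than conceptual: making the matrix $A_{W,j}$ (or the relevant principal submatrix) genuinely have spectral radius $e^{r_j}$ with a strictly positive eigenvector on $W$, so that the pairing argument is valid — one must check that adding the ``feeder'' vertices outside $C$ does not raise the spectral radius above $e^{r_j}$ (it cannot, since $\rho(A_j) = e^{r_j}$ by \eqref{assondyn} and $j \in K$, and $A_{W,j}$ is a principal submatrix of $A_j$, up to the block-triangular reordering recalled in \S\ref{background}), and that the Perron eigenvector, while possibly $0$ on some feeder vertices, is positive on enough of them to force the conclusion — which is where one falls back on the length-induction above. I expect roughly: (1) quote/derive subinvariance; (2) reduce to the colour-$j$ matrix inequality $m A_j \le e^{r_j} m$; (3) the eigenvector-pairing lemma forcing $m_w = 0$ for $w$ one step from $C$; (4) induction on path length to cover all of $\Sigma_j C \setminus C = H_j$.
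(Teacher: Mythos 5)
Your overall strategy---KMS subinvariance, a Perron--Frobenius pairing over the critical component, and an induction on the length of the colour-$j$ path into $C$---is viable and genuinely different from the paper's argument, but as written the linear algebra is consistently transposed and the key pairing step fails. From $q_v\ge\sum_{\mu\in v\Lambda^{ne_j}}t_\mu t_\mu^*$ and the KMS condition one gets $\psi(q_v)\ge e^{-nr_j}\sum_w A_j^n(v,w)\psi(q_w)$, i.e.\ the \emph{column}-vector subinvariance $A_jm\le e^{r_j}m$ for $m=(\psi(q_v))$ (this is \cite[Proposition~4.1(a)]{aHLRS2}, which the paper quotes); your row-vector version $mA_j\le e^{r_j}m$ asserts $\sum_vm_vA_j(v,w)\le e^{r_j}m_w$, which is a different and unavailable inequality. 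Correspondingly, the vector to pair with is the \emph{left} Perron--Frobenius eigenvector $\ell$ of $A_{C,j}$ (a row vector with $\ell A_{C,j}=e^{r_j}\ell$, strictly positive on $C$ by irreducibility), extended by zero off $C$: then $\ell A_j=e^{r_j}\ell+\delta$ with $\delta\ge 0$ supported exactly on $\{w\notin C: C\Lambda^{e_j}w\ne\emptyset\}$, and $\ell(A_jm)\le e^{r_j}\ell m$ forces $\delta m=0$, hence $m_w=0$ for every one-step feeder $w$. Your version pairs a column eigenvector $\kappa$ of $A_{C,j}^T$ against $mA_j$; besides resting on the wrong subinvariance, it asserts $A_{C,j}\kappa=e^{r_j}\kappa$ (false for the eigenvector of the transpose unless $A_{C,j}$ is symmetric), and it would constrain the vertices $w$ with $w\Lambda^{e_j}C\ne\emptyset$, which are \emph{downstream} of $C$ rather than the feeders making up $H_j$. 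Once everything is transposed consistently your step (3) works---and, pleasantly, it needs no case distinction on whether $m|_C$ vanishes---and the induction on path length in step (4) finishes the proof; the ``single pairing over all of $\Sigma_jC$'' packaging is indeed delicate, as you suspect, so the induction is the right fallback.

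For comparison, the paper's proof uses the same subinvariance $A_jm^\psi\le\rho(A_j)m^\psi$ but no eigenvector pairing: it splits into the cases $m^\psi|_C=0$ (where the direct estimate $A_j^n(v,w)m^\psi_w\le(A_j^nm^\psi)_v\le\rho(A_j)^nm^\psi_v=0$ suffices) and $m^\psi|_C\ne 0$ (where Seneta's subinvariance theorem upgrades $A_{C,j}m^\psi|_C\le\rho(A_{C,j})m^\psi|_C$ to an equality, and the block decomposition of $A_j^n$ then yields $\rho(A_{C,j})^nm^\psi_v\ge\rho(A_{C,j})^nm^\psi_v+A_j^n(v,w)m^\psi_w$ directly), handling all path lengths at once through the single power $A_j^n$. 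Your corrected route trades the case split and Seneta's theorem for the left-eigenvector pairing plus an induction; both are legitimate, and yours is arguably more self-contained, but the transposition must be fixed before the argument is a proof.
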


\begin{proof}
Since $j\in K$ and  $r_j=\ln\rho(A_{C,j})$, we have $\rho(A_{C,j})=\rho(A_j)$. Thus  \cite[Proposition~4.1(a)]{aHLRS2} implies that the vector  $m^\psi:=\big(\psi(q_v)\big)$ satisfies 
\begin{equation}\label{subinvforj}
A_jm^\psi\leq e^{r_j}m^\psi=\rho(A_j)m^\psi.
\end{equation}
With respect to the decomposition $\Lambda^0=(\Lambda^0\backslash\Sigma_jC)\cup C\cup H_j$, the vertex matrix $A_j$ has block form 
\begin{equation*}
A_j=
\begin{pmatrix}A_{\Lambda^0\backslash\Sigma_jC,j}&\star&\star\\0&A_{C,j}&A_{C,H_j,j}\\0&0&A_{H_j,j}
\end{pmatrix}.  
\end{equation*}
We fix $w\in H_j$. Then there exits $n\geq 1$ and $v\in C$ such that $v\Lambda^{n e_j}w\neq \emptyset$. 

We first suppose that $m^\psi|_C=0$.  Then we have 
\begin{align*}0&\leq A_j^n(v,w)m^\psi_w= (A^n_j)_{C,H_j}(v,w)m_w^\psi\\
&\leq \sum_{u\in H_j} (A_j^n)_{C, H_j}(v,u)m^\psi_u
= \big((A^n_j)_{C,H_j}m^\psi|_{H_j}\big)_v\\
&\leq \big(A^n_jm^\psi\big)_v\leq \rho(A_j)^nm^\psi_v\quad\text{by \eqref{subinvforj}.}
\end{align*}
Since $A_j^n(v,w)>0$, these estimates force $m^\psi_w=0$.

So we suppose that $m^\psi|_C\not=0$. Now we recall that $\rho(A_{C,j})=\rho(A_j)$, and read off from the central block of \eqref{subinvforj} that
\[
A_{C,j}m^\psi|_C\leq (A_jm^\psi)|_C\leq\rho(A_j)m^\psi|_C=\rho(A_{C,j})m^\psi|_C.
\]
Since $A_{C,j}$ is irreducible and $m^\psi|_C\not=0$, the subinvariance theorem \cite[Theorem~1.6]{Sen} implies that
\[
A_{C,j}m^\psi|_C=\rho(A_{C,j})m^\psi|_C. 
\]
With $n$ and $v$  as above we have $A_j^n(v,w)>0$. Then \eqref{subinvforj} gives
\begin{align*}
\rho(A_{C,j})^nm^\psi_v&=\rho(A_j)^nm^\psi_v\geq \big(A^n_jm^\psi\big)_v\\
&=\sum_{u\in C}A_{C,j}^n(v,u)m^\psi_u+\sum_{u\in H_j}A_j^n(v,u)m^\psi_u\\
&\geq\sum_{u\in C}A_{C,j}^n(v,u)m^\psi_u+A_j^n(v,w)m^\psi_w\\
&=\rho(A_{C,j})^nm^\psi_v+A_j^n(v,w)m^\psi_w.
\end{align*}
Thus $m^\psi_w=0$, as required.
\end{proof}

\begin{cor}\label{Ccrit}
Suppose that we have $\Lambda$, $r$, $C$ and $j$ as in Proposition~\ref{estnonpref}, and that the set $H_j$ in that proposition is hereditary in $\Lambda$. Then every KMS$_1$ state of $(\TC^*(\Lambda),\alpha^r)$ factors through a state of $(\TC^*(\Lambda\backslash H_j),\alpha^r)$.
\end{cor}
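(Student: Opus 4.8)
The plan is to combine Proposition~\ref{estnonpref} with the structural facts recalled in the background about hereditary sets. The key observation is that, by the discussion of hereditary sets, if $H_j$ is hereditary in $\Lambda$ then the ideal $I_{H_j}$ of $\TC^*(\Lambda)$ generated by $\{q_w:w\in H_j\}$ has quotient $\TC^*(\Lambda)/I_{H_j}$ canonically isomorphic to $\TC^*(\Lambda\backslash H_j)$, and this isomorphism intertwines the gauge actions (since the quotient map sends $t_\lambda\mapsto t_\lambda$ and $\gamma_z$ is compatible with the quotient). Hence it also intertwines the dynamics $\alpha^r$ on both sides. So it suffices to show that every KMS$_1$ state $\psi$ of $(\TC^*(\Lambda),\alpha^r)$ vanishes on $I_{H_j}$; then $\psi$ descends to a state on the quotient, which is automatically a KMS$_1$ state for the induced dynamics, and composing with the canonical isomorphism gives the desired KMS$_1$ state of $(\TC^*(\Lambda\backslash H_j),\alpha^r)$.

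So the crux is: $\psi$ vanishes on $I_{H_j}$. First I would invoke Proposition~\ref{estnonpref} to get $\psi(q_w)=0$ for every $w\in H_j$. Then, since $q_w$ is a projection and $\psi$ is a state (hence positive), the Cauchy--Schwarz inequality for the associated positive sesquilinear form gives $\psi(aq_w b)=0$ for all $a,b\in\TC^*(\Lambda)$: indeed $|\psi(aq_wb)|=|\psi((q_wa^*)^*(q_wb))|\leq\psi(aq_wa^*)^{1/2}\psi(b^*q_wb)^{1/2}$, and $\psi(aq_wa^*)\leq\|a\|^2\psi(q_w)=0$ since $q_w\leq Q_{s(w)}$... more simply $\psi(aq_wa^*)=\psi(aq_wq_wa^*)\leq\|aq_w\|\cdot\|q_wa^*\|$ is not quite it; the clean statement is that $\{x:\psi(x^*x)=0\}$ is a closed left ideal, so $q_w$ lies in it and hence so does $bq_w$ for any $b$, giving $\psi(a^*bq_w)=0$. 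In any case, the linear span of $\{aq_wb:a,b\in\TC^*(\Lambda),w\in H_j\}$ is dense in $I_{H_j}$, and $\psi$ is continuous, so $\psi|_{I_{H_j}}=0$.

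I would then spell out the descent: define $\tilde\psi$ on $\TC^*(\Lambda)/I_{H_j}$ by $\tilde\psi(x+I_{H_j})=\psi(x)$, which is well-defined and a state because $\psi$ annihilates $I_{H_j}$; it satisfies the KMS$_1$ condition for the quotient dynamics because $\psi$ did and the quotient map is equivariant; transporting along the canonical isomorphism $\TC^*(\Lambda)/I_{H_j}\cong\TC^*(\Lambda\backslash H_j)$ yields a KMS$_1$ state $\phi$ of $(\TC^*(\Lambda\backslash H_j),\alpha^r)$ with $\psi=\phi\circ q$, which is exactly the assertion that $\psi$ factors through a state of $(\TC^*(\Lambda\backslash H_j),\alpha^r)$.

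I do not expect a genuine obstacle here: the only mild point to get right is that the induced dynamics on the quotient really does correspond to $\alpha^r$ on $\TC^*(\Lambda\backslash H_j)$ under the canonical identification --- but this is immediate from the fact that the identification sends the universal generators to the universal generators and $\alpha^r$ is defined on generators by the degree map, which is unchanged on $\Lambda\backslash H_j$. The one hypothesis that does real work is the heredity of $H_j$: without it $\Lambda\backslash H_j$ need not be a $k$-graph and $I_{H_j}$ need not have the clean description above, so the reduction would fail. Everything else is the standard "a state vanishing on an ideal factors through the quotient" argument.
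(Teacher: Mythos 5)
Your overall architecture matches the paper's: apply Proposition~\ref{estnonpref} to get $\psi(q_w)=0$ for $w\in H_j$, deduce that $\psi$ kills the ideal $I_{H_j}$, and then use the canonical equivariant isomorphism $\TC^*(\Lambda)/I_{H_j}\cong\TC^*(\Lambda\backslash H_j)$ (which is where you correctly locate one use of heredity) to descend the state. The first and last steps are fine. The gap is in the middle step. For a general state $\psi$ and a projection $p$ with $\psi(p)=0$, positivity and Cauchy--Schwarz do give $\psi(ap)=\psi(pa)=0$ for all $a$ (your left-ideal argument), but they do \emph{not} give $\psi(apb)=0$: take $\psi$ to be the vector state $x\mapsto\langle xe_1,e_1\rangle$ on $M_2(\CC)$, $p=e_{22}$, $a=e_{12}$, $b=e_{21}$; then $\psi(p)=0$ but $apb=e_{11}$ and $\psi(apb)=1$. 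The same example shows that the inequality $\psi(aq_wa^*)\leq\|a\|^2\psi(q_w)$ you float is false. So your concluding sentence ``the linear span of $\{aq_wb\}$ is dense in $I_{H_j}$, and $\psi$ is continuous, so $\psi|_{I_{H_j}}=0$'' is a non sequitur: you never establish that $\psi$ vanishes on the spanning elements. You half-noticed this (``is not quite it'') but then asserted the conclusion anyway.

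What rescues the step is that $\psi$ is a KMS state, not merely a state. Heredity of $H_j$ is used a second time here: it gives $I_{H_j}=\clsp\{t_\mu t_\nu^*:s(\mu)=s(\nu)\in H_j\}$, and the KMS$_1$ condition then forces $\psi(t_\mu t_\nu^*)=\delta_{\mu,\nu}e^{-r\cdot d(\mu)}\psi(q_{s(\mu)})=0$ on these spanning elements, whence $\psi|_{I_{H_j}}=0$ by continuity. This is precisely the content of \cite[Lemma~2.2]{aHLRS1} (alternatively \cite[Lemma~6.2]{AaHR}), which is what the paper invokes at this point. With that substitution your argument closes and coincides with the paper's proof.
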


\begin{proof}
Suppose that $\psi$ is a KMS$_1$ state of $(\TC^*(\Lambda),\alpha^r)$. Proposition~\ref{estnonpref} implies that $\psi$ vanishes on the set $P:=\{q_w:w\in H_j\}$. As at the end of the proof of \cite[Theorem~5.1]{aHKR2}, it follows from \cite[Lemma~2.2]{aHLRS1} that $\psi$ vanishes on the ideal $I_{H_j}$ generated by $P$. Since $H_j$ is hereditary, \cite[Proposition~2.2]{aHKR2} implies that $\TC^*(\Lambda)/I_{H_j}\cong\TC^*(\Lambda\backslash H_j)$, and the result follows. 
\end{proof}

Corollary~\ref{Ccrit} applies in particular if $C\Lambda^{\NN e_j}w\not=\emptyset$ for all vertices $w$, and then says that every KMS$_1$ state of $(\TC^*(\Lambda),\alpha^r)$ factors though a state of $(\TC^*(\Lambda_C),\alpha^r)$. However, it is possible that $H_j=\emptyset$. For example, suppose that $\Lambda$ is a dumbbell graph as in Example~\ref{ex2dumbbell} with $p_1=0$ (and then necessarily with $m_1=n_1$). Then Corollary~\ref{Ccrit} gives no information.

When the set $H_j$ of Proposition~\ref{estnonpref} is not hereditary,  
$\Lambda\backslash H_j$ may not be a $k$-graph. Our next goal is to prove that, provided  $\Lambda$ is sufficiently connected,  there exists a 
hereditary subset $H$ such that  every KMS$_1$ state of $\TC^*(\Lambda)$ 
factors through a KMS$_1$ state of $\TC^*(\Lambda\backslash H)$. We start with a lemma.

\begin{lem}\label{allpossame}
Suppose that $\Lambda$ is a finite $k$-graph with no sources or sinks. Suppose that $\bigcup\{C:C\in \Cc\}=\Lambda^0$, that all the graphs $\{\Lambda_C:C\in \Cc\}$ are coordinatewise irreducible and that for 
components $C,D\in \Cc$ we have
\begin{equation}\label{allconnected}
C\Lambda^{\NN e_j}D\not=\emptyset \quad\text{for some $j$\ 
}\Longrightarrow \ C\Lambda^{\NN e_i}D\not=\emptyset\quad\text{for $1\leq 
i\leq k$.}
\end{equation}
Suppose that $v\leq w$, in the sense that $v\Lambda w\not=\emptyset$. Then $v\Lambda^{\NN e_i}w\not=\emptyset$ for $1\leq i\leq k$.
\end{lem}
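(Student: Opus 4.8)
The plan is to take any path $\lambda\in v\Lambda w$, factor it into edges, and then ``recolour'' it one edge at a time: within a strongly connected component we move using coordinatewise irreducibility, and to cross from one component to another we invoke hypothesis~\eqref{allconnected}.

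First I would use the factorisation property to write $\lambda=f_1f_2\cdots f_m$ with each $f_\ell\in\Lambda^{e_{c_\ell}}$ for some colour $c_\ell$, and record the intermediate vertices: set $x_0:=v=r(f_1)$, $x_\ell:=s(f_\ell)=r(f_{\ell+1})$ for $1\le\ell<m$, and $x_m:=s(f_m)=w$, so that $f_\ell$ is an edge from $x_\ell$ to $x_{\ell-1}$. Since $\bigcup\{C:C\in\Cc\}=\Lambda^0$, each $x_\ell$ lies in some (necessarily nontrivial) component $C_\ell\in\Cc$.

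The heart of the argument is the claim that for every $\ell$ and every colour $i$ there is a path of degree in $\NN e_i$ from $x_\ell$ to $x_{\ell-1}$. If $C_\ell=C_{\ell-1}=:C$, then either $x_\ell=x_{\ell-1}$, in which case the trivial path $x_\ell$ works, or $x_\ell\neq x_{\ell-1}$, in which case coordinatewise irreducibility of $\Lambda_C$ makes $A_{C,i}$ irreducible, so some power $A_{C,i}^p$ has nonzero $(x_{\ell-1},x_\ell)$ entry and hence there is a colour-$i$ path from $x_\ell$ to $x_{\ell-1}$ inside $C$. If $C_\ell\neq C_{\ell-1}$, then the edge $f_\ell$ shows $C_{\ell-1}\Lambda^{\NN e_{c_\ell}}C_\ell\neq\emptyset$, so \eqref{allconnected} supplies $a\in C_{\ell-1}$, $b\in C_\ell$ and $n$ with $a\Lambda^{ne_i}b\neq\emptyset$; splicing such a path between a colour-$i$ path from $x_\ell$ to $b$ inside $C_\ell$ and a colour-$i$ path from $a$ to $x_{\ell-1}$ inside $C_{\ell-1}$ (each produced exactly as in the same-component case) yields the desired colour-$i$ path from $x_\ell$ to $x_{\ell-1}$.

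Finally, I would compose these $m$ colour-$i$ paths in order to obtain a single path of degree in $\NN e_i$ from $w=x_m$ to $v=x_0$, which is precisely an element of $v\Lambda^{\NN e_i}w$. The only points requiring care are the source/range bookkeeping when concatenating morphisms and the degenerate cases where a component has a single vertex (handled by the trivial-path remark above, since such a component need not carry a colour-$i$ loop); I do not expect a genuine obstacle here. Note that all three hypotheses of the lemma are used: connectedness of each component through colour $i$ comes from coordinatewise irreducibility, passage between distinct components comes from \eqref{allconnected}, and $\bigcup\{C:C\in\Cc\}=\Lambda^0$ ensures the scheme applies at every vertex of $\lambda$.
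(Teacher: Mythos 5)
Your proposal is correct and follows essentially the same route as the paper's proof: factor the path into edges, recolour the within-component pieces using coordinatewise irreducibility of the $A_{C,i}$, and replace each component-crossing edge by a colour-$i$ bridge supplied by \eqref{allconnected} with its endpoints adjusted inside the two components. The only cosmetic difference is that the paper groups consecutive same-component edges into single segments $\mu_n$ rather than treating the path edge by edge, which changes nothing of substance.
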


\begin{proof}
Suppose that $\lambda\in v\Lambda w$ and  $1\leq i\leq k$. Since $\Lambda^0=\bigcup\{C:C\in \Cc\}$, we can factor $\lambda=\mu_1\lambda_1\dots\mu_m\lambda_m\mu_{m+1}$, where each $\mu_i$ 
has range and source in the same component and each $\lambda_i$ is an 
edge with range and source in different components. For $1\leq n\leq m+1$, let $C_n$ be the component such that $\mu_n\in \Lambda_{C_n}$. Since $\Lambda_{C_n}$ is coordinatewise irreducible, there are paths $\mu_n'\in \Lambda_{C_n}^{\NN e_i}$ such that $r(\mu_n')=r(\mu_n)$ and $s(\mu_n')=s(\mu_n)$. Let $D_n$ be the components such that $\lambda_n\in C_n\Lambda^{e_j}D_n$ for some~$j$. Then \eqref{allconnected} implies that there exists $\lambda_n'\in C_n\Lambda^{\NN e_i}D_n$; since both $\Lambda_{C_n}$ and $\Lambda_{D_n}$ are coordinatewise irreducible, we can suppose also that $\lambda_n'\in r(\lambda)\Lambda^{\NN e_i}s(\lambda_n)$. Then $\mu_1'\lambda_1'\dots\mu_m'\lambda_m'\mu_{m+1}'$ is a path in $v\Lambda^{\NN e_i}w$.
\end{proof}

We consider the partial order on components such that $C\leq D\Longleftrightarrow C\Lambda D\not=\emptyset$. This restricts to a partial order on the set $\Ccrit$ of critical components. Since $\Ccrit\subset \Cc$ is finite, there are elements of $\Ccrit$ which are minimal in this partial order. We denote the set of these minimal elements by $\Cmcrit$.

\begin{prop}\label{reducetocrither}
Suppose that $\Lambda$ is a finite $k$-graph with no sources or sinks, and that $r\in (0,\infty)^k$ satisfies \eqref{assondyn}. Suppose that $\bigcup\{C:C\in \Cc\}=\Lambda^0$, that all the graphs $\{\Lambda_C:C\in \Cc\}$ are coordinatewise irreducible, and that \eqref{allconnected} holds.  Let $H'$ be the hereditary closure of $G:=\bigcup\{C:C\in \Cmcrit\}$, and set $H:=H'\backslash G$. Then $H$ is hereditary, $\Cmcrit$ is the set of all critical components in the graph $\Lambda\backslash H$, and each $C\in \Cmcrit$ is hereditary in $\Lambda\backslash H$. Every KMS$_1$ state of $(\TC^*(\Lambda),\alpha^r)$ factors through a KMS$_1$ state of $(\TC^*(\Lambda\backslash H),\alpha^r)$.
\end{prop}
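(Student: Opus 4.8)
The plan is to produce the hereditary set $H$ explicitly from the minimal critical components and then peel it off using Corollary~\ref{Ccrit} (or its proof). First I would establish the purely combinatorial claims. That $H$ is hereditary is essentially formal: $H' = \overline{G}$ is the hereditary closure of $G = \bigcup\{C : C \in \Cmcrit\}$, so $H'$ is hereditary; and since each $C \in \Cmcrit$ is a strongly connected component, once a path leaves $C$ it cannot return, so removing $G$ from $H'$ does not destroy heredity — more precisely, if $v \in H = H' \setminus G$ and $v\Lambda w \neq \emptyset$ then $w \in H'$, and $w \in G$ would force $v \in G$ by heredity combined with the fact that the components of $\Cmcrit$ are minimal (nothing in $H' \setminus G$ can sit above a vertex of $G$, because the $C \in \Cmcrit$ are $\leq$-minimal among critical components and are hereditary inside their own layer). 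This is the step I would write out with a little care, since it is where the minimality of $\Cmcrit$ is genuinely used.

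Next I would identify the critical components of $\Lambda\setminus H$. The vertex set of $\Lambda\setminus H$ is $\Lambda^0 \setminus H = (\Lambda^0 \setminus H') \cup G$, and its nontrivial strongly connected components are exactly those $C \in \Cc$ with $C \subset \Lambda^0 \setminus H$; the spectral radii $\rho(A_{C,i})$ are unchanged since $\Lambda_C$ is the same graph. A component $C \subset \Lambda^0\setminus H$ is critical (for the restricted dynamics, whose scaling is inherited from $r$) iff it was critical in $\Lambda$. I claim every $C \in \Cmcrit$ survives, i.e. $C \cap H = \emptyset$: indeed $C \subset G \subset H'$ and $C \cap G = C$, so $C \cap H = C \cap (H'\setminus G) = \emptyset$. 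Conversely, any critical component $C$ with $C \subset \Lambda^0 \setminus H$ must lie in $\Cmcrit$: if not, there is a strictly smaller critical component $D < C$; but then $D \in \Cmcrit$ or $D$ dominates some element of $\Cmcrit$... — more cleanly, every critical component lies above some element of $\Cmcrit$, hence above $G$, hence inside $H'$; if such a $C$ is \emph{not} in $\Cmcrit$ then $C \subset H' \setminus G = H$ (since $C\cap G=\emptyset$ as $C\notin\Cmcrit$ and the $C'\in\Cmcrit$ are distinct components), contradicting $C \subset \Lambda^0\setminus H$. So $\Cmcrit$ is exactly the set of critical components of $\Lambda\setminus H$. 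Finally, each $C \in \Cmcrit$ is hereditary in $\Lambda \setminus H$: if $v \in C$ and $v(\Lambda\setminus H)w \neq \emptyset$, then $w \in H'$ (as $v \in G$ and $H'$ is the hereditary closure of $G$), and $w \notin H$ forces $w \in G$; minimality of $C$ among critical components then gives $w \in C$ (a vertex of $G$ reachable from $C$ lies in a critical component $\leq C$, hence equal to $C$).

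Now the KMS part. Fix a KMS$_1$ state $\psi$ of $(\TC^*(\Lambda),\alpha^r)$. For each $C \in \Cmcrit$, $C$ is $j$-critical for some $j \in K$, so Proposition~\ref{estnonpref} gives $\psi(q_w) = 0$ for all $w \in H_j(C) := (\Sigma_j C)\setminus C$. I would then argue $H = \bigcup_{C \in \Cmcrit} H_j(C)$, or at least $H \subset \bigcup_C H_j(C)$, using Lemma~\ref{allpossame}: a vertex $w \in H$ lies in $H' = \overline{G}$, so $v \Lambda w \neq \emptyset$ for some $v \in C \in \Cmcrit$; by \eqref{allconnected} together with coordinatewise irreducibility of the $\Lambda_{C'}$ (this is exactly Lemma~\ref{allpossame}, whose hypotheses hold here since $\bigcup\Cc = \Lambda^0$), we get $v\Lambda^{\NN e_j}w \neq \emptyset$, i.e. $w \in \Sigma_j C$; and $w \notin G$ since $w \in H$, so in particular $w \notin C$, giving $w \in H_j(C)$. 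Hence $\psi$ vanishes on $\{q_w : w \in H\}$. As in the proof of Corollary~\ref{Ccrit}, \cite[Lemma~2.2]{aHLRS1} upgrades this to vanishing on the ideal $I_H$ generated by $\{q_w : w \in H\}$, and since $H$ is hereditary, \cite[Proposition~2.2]{aHKR2} identifies $\TC^*(\Lambda)/I_H \cong \TC^*(\Lambda\setminus H)$; thus $\psi$ factors through a state of $\TC^*(\Lambda\setminus H)$ which is again KMS$_1$ because the quotient map is $\alpha^r$-equivariant and the induced dynamics on $\TC^*(\Lambda\setminus H)$ is the restriction of $\alpha^r$.

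The main obstacle, I expect, is the bookkeeping around $\Cmcrit$: being careful that "removing $G$ from its hereditary closure" leaves a hereditary set, that no non-minimal critical component accidentally survives in $\Lambda\setminus H$, and that each surviving $C$ is hereditary there. Each of these hinges on the same fact — a vertex of $G$ reachable from a critical component $C$ must lie in a critical component $\leq C$, hence (by minimality, if $C \in \Cmcrit$) in $C$ itself — so I would isolate that observation as a sub-claim at the start and invoke it repeatedly. The analytic/$C^*$ part is routine given Proposition~\ref{estnonpref} and Corollary~\ref{Ccrit}; the only genuinely new input beyond Corollary~\ref{Ccrit} is the use of Lemma~\ref{allpossame} to guarantee that the non-hereditary sets $H_j(C)$ assemble into the single hereditary set $H$.
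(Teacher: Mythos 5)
Your proposal is correct and follows essentially the same route as the paper: the same minimality argument establishes that $H$ is hereditary and that each $C\in\Cmcrit$ is hereditary in $\Lambda\backslash H$, and the KMS part combines Lemma~\ref{allpossame} with Proposition~\ref{estnonpref} to kill $\psi(q_w)$ for $w\in H$ before passing to the quotient $\TC^*(\Lambda)/I_H\cong\TC^*(\Lambda\backslash H)$, exactly as in the paper (which cites \cite[Lemma~6.2]{AaHR} where you cite \cite[Lemma~2.2]{aHLRS1}, an interchangeable choice the paper itself notes). Your write-up is in places more explicit than the paper's, e.g.\ in verifying that no non-minimal critical component survives in $\Lambda\backslash H$, but there is no substantive difference in method.
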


\begin{proof}
To see that $H$ is hereditary, suppose that $v\in H$ and $v\leq w$. Since $v$ belongs to the hereditary closure of $G$, there exists $C\in \Cmcrit$ such that $C\leq v$. But then $C\leq w$ also, and $w\in H'$.  Suppose, looking for a contradiction, that $w\in G$. Then there exists $D\in \Cmcrit$ such that $w\in D$. Then $C\Lambda D\neq \emptyset$, and minimality forces $C=D$.
Then $w\in C$  forces $v\in C$, which is not possible because $v\in H=H'\backslash G$. So $w$ is not in $G$, and $w\in H$. Thus $H$ is hereditary.

Since we have removed all the critical components that are not minimal, and have not added any new ones, the critical components of $\Lambda\backslash H$ are those in $\Cmcrit$. To see that $C\in \Cmcrit$ is hereditary in $\Lambda\backslash H$, take $v\in (\Lambda\backslash H)^0=\Lambda^0\backslash H$ such that $C\Lambda v\not=\emptyset$. Then $v$ belongs to the hereditary closure $H'$, and since $H'=G\cup H$, we have $v\in G$. So there exists $D\in \Cmcrit$ such that $v\in D$. Then $C\leq D$, and minimality of $D$ forces $D=C$. Thus $v\in C$, as required. 

Now we suppose that $\phi$ is a KMS$_1$ state of $(\TC^*(\Lambda),\alpha^r)$. Let $w\in H$. Then there exists $C\in\Cmcrit$ such that $C\Lambda w\not=\emptyset$. Then $C$ is $j$-critical for some $j$. By Lemma~\ref{allpossame}, $C\Lambda^{\NN e_j} w\neq\emptyset$. Then  $w$ belongs to the set $\Sigma_jC$ of Proposition~\ref{estnonpref}, and that proposition implies that $\phi(q_w)=0$. Thus $\phi(q_w)=0$ for all $w\in H$. We deduce from \cite[Lemma~6.2]{AaHR} (for example\footnote{We could also use \cite[Lemma~2.2]{aHLRS1} for this, but \cite[Lemma~6.2]{AaHR} is easier to use and much more general.}) that $\phi$ vanishes on the ideal generated by $\{q_w:w\in H\}$. This ideal is $I_H$, and \cite[Proposition~2.2]{aHKR2} shows that the canonical map $q_H:\TC^*(\Lambda)\to \TC^*(\Lambda\backslash H)$ induces an isomorphism of $\TC^*(\Lambda)/I_H$ onto $\TC^*(\Lambda\backslash H)$. Thus $\phi$ factors through a state of $\TC^*(\Lambda\backslash H)$ which is necessarily a KMS$_1$ state of $(\TC^*(\Lambda\backslash H),\alpha^r)$.
\end{proof}

\begin{rmk}
The hypotheses of Proposition~\ref{reducetocrither} imply that the graph $\Lambda\backslash H$ also has no sources or sinks. In \cite[\S8]{aHKR2}, we showed how removing a hereditary component can create sources and hence extra KMS states; Examples~8.4 and 8.5 in \cite{aHKR2} show that quite a variety of things can happen if there are complicated bridges between components. Here, where we are focusing on graphs which have no long bridges between components, Proposition~\ref{reducetocrither} says that it suffices to study the KMS$_1$ states for graphs in which every critical component is hereditary.
\end{rmk}

\section{Extending the Perron--Frobenius eigenvector}\label{extendPF}

The following is a strengthening of \cite[Proposition~6.1]{aHKR2}, which is a linear-algebraic result about the vertex matrices of a finite $k$-graph. We recover \cite[Proposition~6.1]{aHKR2} when the set $L_D$ at \eqref{Dhered-new} is $\{1,\dots ,k\}$. In the applications, the eigenvector $x$ will be the common unimodular Perron--Frobenius eigenvector of the commuting irreducible matrices $\{A_{D,i}\}$. It is important for the application in Theorem~\ref{orderrho} that the set $L_D$ in \eqref{Dhered-new} can be a proper subset of $\{1,\dots,k\}$.

\begin{prop}\label{evectors-new}
Suppose that $\Lambda$ is a finite $k$-graph without sinks or sources. Suppose that $D\in \Cc$ is hereditary, and set $H:=\{v\in \Lambda^0:v\Lambda D=\emptyset\}$. Suppose that
\begin{equation}\label{Dhered-new}
L_D:=\big\{i\in \{1,\dots,k\}:\rho(A_{C,i})<\rho(A_{D,i})\quad\text{for $C\in \Cc\backslash \{D\}$ with $C\Lambda D\not=\emptyset$}\big\}
\end{equation}
is nonempty, and that $x$ is a nonnegative eigenvector of $A_{D,i}$ with eigenvalue $\rho(A_{D,i})$ for $1\leq i\leq k$.  Write $F:=\Lambda^0\backslash(D\cup H)$. Then with respect to the decomposition $\Lambda^0=F\sqcup D\sqcup H$, the  vertex matrices have block form
\begin{equation*}
A_i=\begin{pmatrix} E_i & B_i&\star\\ 0 & A_{D,i}&0\\0&0&A_{H,i}\end{pmatrix}\quad\text{for $1\leq i\leq k$.} 
\end{equation*}
\begin{enumerate}
\item\label{equaly} For $1\leq i,j\leq k$ we have
\begin{equation}\label{easyeq}
(\rho(A_{D,i})1_F-E_i)B_j x=(\rho(A_{D,j})1_F-E_j)B_ix.
\end{equation}
\item\label{defy2} For $i,j\in L_D$, we have 
\begin{equation}\label{defy-new}
(\rho(A_{D,i})1_F-E_i)^{-1}B_i x=(\rho(A_{D,j})1_F-E_j)^{-1}B_jx;
\end{equation}
write $y$ for the common vector \eqref{defy-new}. Then $y$ is nonnegative, and for every $i\in \{1,\dots,k\}$, $(y,x,0)$ is an eigenvector of $A_i$ with eigenvalue $\rho(A_{D,i})$.
\end{enumerate}
\end{prop}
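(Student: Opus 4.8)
The block form of the $A_i$ follows immediately from the hereditary hypotheses: $D$ hereditary gives $A_i(d,v)=0$ for $d\in D$, $v\notin D$; and $H$ is defined so that $v\Lambda D=\emptyset$ for $v\in H$, hence there are no paths from $H$ into $D$, so the $(H,D)$ and $(H,F)$ blocks vanish as well. Thus $A_i$ has the stated upper-triangular-with-isolated-$D$ shape, and the proof reduces to pure linear algebra on the relation $A_i(y,x,0)^T=\rho(A_{D,i})(y,x,0)^T$, which unpacks to the single equation $E_iy+B_ix=\rho(A_{D,i})y$, i.e. $(\rho(A_{D,i})1_F-E_i)y=B_ix$.

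\textbf{Part (a).} The plan is to exploit commutativity $A_iA_j=A_jA_i$, read off at the $(F,D)$ block. Computing both products in block form, the $(F,D)$ entry of $A_iA_j$ is $E_iB_j+B_jA_{D,j}$... more carefully, $E_iB_j+B_iA_{D,j}$, and of $A_jA_i$ is $E_jB_i+B_jA_{D,i}$; so $E_iB_j+B_iA_{D,j}=E_jB_i+B_jA_{D,i}$. Apply both sides to the vector $x$ and use $A_{D,j}x=\rho(A_{D,j})x$, $A_{D,i}x=\rho(A_{D,i})x$: this gives $E_iB_jx+\rho(A_{D,j})B_ix=E_jB_ix+\rho(A_{D,i})B_jx$, which rearranges precisely to \eqref{easyeq}. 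This step is short and essentially forced.

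\textbf{Part (b).} First I need the matrices $\rho(A_{D,i})1_F-E_i$ for $i\in L_D$ to be invertible, indeed to have nonnegative inverse. Here is where the definition of $L_D$ enters: $E_i$ is (a submatrix of) the vertex matrix restricted to $F=\Lambda^0\setminus(D\cup H)$, and its spectral radius is $\max\{\rho(A_{C,i}):C\in\Cc,\ C\subseteq F\}$ by the block-triangular structure recalled in the Background section. Every such $C$ feeds into $D$ (since $C\not\subseteq H$ and, using the connectivity set-up, a component in $F$ must reach $D$), so the condition $i\in L_D$ forces $\rho(E_i)<\rho(A_{D,i})$. Then $\rho(A_{D,i})1_F-E_i$ is an invertible $M$-matrix, so $(\rho(A_{D,i})1_F-E_i)^{-1}=\rho(A_{D,i})^{-1}\sum_{n\ge 0}(E_i/\rho(A_{D,i}))^n\ge 0$. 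Define $y_i:=(\rho(A_{D,i})1_F-E_i)^{-1}B_ix$ for $i\in L_D$; each $y_i$ is nonnegative since $B_i\ge 0$ and $x\ge 0$. To prove $y_i=y_j$ for $i,j\in L_D$, multiply the identity \eqref{easyeq} on the left by $(\rho(A_{D,i})1_F-E_i)^{-1}$ and on the right notation; concretely, \eqref{easyeq} reads $(\rho(A_{D,i})1_F-E_i)B_jx=(\rho(A_{D,j})1_F-E_j)B_ix$, so applying $(\rho(A_{D,i})1_F-E_i)^{-1}$ on the left and then $(\rho(A_{D,j})1_F-E_j)^{-1}$ after suitable rearrangement yields $B_jx=(\rho(A_{D,i})1_F-E_i)^{-1}(\rho(A_{D,j})1_F-E_j)B_ix$; the two operators $(\rho(A_{D,i})1_F-E_i)$ and $(\rho(A_{D,j})1_F-E_j)$ commute (both are polynomials in the commuting matrices $E_i$, $E_j$ — which commute because the $A_i$ do and this persists on the $F\times F$ block), so their inverses commute, and dividing through gives $y_j=(\rho(A_{D,j})1_F-E_j)^{-1}B_jx=(\rho(A_{D,i})1_F-E_i)^{-1}B_ix=y_i$. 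Call this common vector $y$; it is nonnegative. Finally, fix any $i\in\{1,\dots,k\}$ (not necessarily in $L_D$) and check that $(y,x,0)$ is an eigenvector of $A_i$ with eigenvalue $\rho(A_{D,i})$: the $D$-component of $A_i(y,x,0)^T$ is $A_{D,i}x=\rho(A_{D,i})x$, the $H$-component is $0$, and the $F$-component is $E_iy+B_ix$, so I must show $E_iy+B_ix=\rho(A_{D,i})y$, i.e. $(\rho(A_{D,i})1_F-E_i)y=B_ix$. Pick any $j\in L_D$ (nonempty by hypothesis); then $y=(\rho(A_{D,j})1_F-E_j)^{-1}B_jx$, so I must verify $(\rho(A_{D,i})1_F-E_i)(\rho(A_{D,j})1_F-E_j)^{-1}B_jx=B_ix$, equivalently $(\rho(A_{D,i})1_F-E_i)B_jx=(\rho(A_{D,j})1_F-E_j)B_ix$ — and that is exactly \eqref{easyeq} from part (a). This closes the argument for all $i$.

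\textbf{Main obstacle.} The only genuinely delicate point is justifying invertibility and nonnegativity of $(\rho(A_{D,i})1_F-E_i)^{-1}$ for $i\in L_D$; this is where the precise definition of $L_D$ and the block-triangular form of the vertex matrices (so that $\rho(E_i)=\max\{\rho(A_{C,i}):C\subseteq F\}$) must be combined to conclude $\rho(E_i)<\rho(A_{D,i})$, after which the Neumann series argument is routine. Everything else is bookkeeping with block matrix multiplication and the commutativity of the $A_i$.
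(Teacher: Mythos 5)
Your proposal is correct and follows essentially the same route as the paper: part (a) by reading off the $(F,D)$ block of $A_iA_j=A_jA_i$ and applying it to $x$, and part (b) by observing that $\rho(E_i)=\max\{\rho(A_{C,i}):C\in\Cc\setminus\{D\},\ C\Lambda D\neq\emptyset\}<\rho(A_{D,i})$ for $i\in L_D$, so that the commuting matrices $\rho(A_{D,i})1_F-E_i$ are invertible with nonnegative (Neumann-series) inverses, after which \eqref{defy-new} and the eigenvector identity both reduce to \eqref{easyeq}. No gaps.
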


\begin{proof}
Since $D$ is hereditary, we have $D\Lambda(\Lambda^0\backslash D)=\emptyset$, and the block decomposition has the required form.

Suppose that $i,j\in \{1,\dots ,k\}$. Since $A_iA_j=A_jA_i$, the block form of the product gives 
$E_iE_j=E_jE_i$ and 
\begin{equation}\label{eq:case2_matrix}
E_iB_j+B_iA_{D,j}=E_jB_i+B_jA_{D,i}. 
\end{equation}
Since $x$ is an eigenvector for both $A_{D,i}$ and $A_{D,j}$, \eqref{eq:case2_matrix} gives
\begin{align*}
(\rho(A_{D,i})1_F-E_i)B_j x &=\rho(A_{D,i})B_jx-E_iB_jx\\
&=\rho(A_{D,i})B_jx-(E_jB_ix+B_jA_{D,i}x-B_iA_{D,j}x)\notag\\
&=\rho(A_{D,i})B_jx-(E_jB_ix+B_j\rho(A_{D,i})x-B_iA_{D,j}x)\notag\\
&=B_iA_{D,j}x-E_jB_ix\notag\\
&=(\rho(A_{D,j})1_F-E_j)B_ix.\notag
\end{align*}
Thus we have \eqref{easyeq}, and we have proved \eqref{equaly}.

Next we take $i,j\in L_D$. Then 
\[
\rho(A_{D,i})>\max\big\{\rho(A_{C,i}):C\in \Cc\backslash\{D\}\text{ and }C\Lambda D\not=\emptyset\big\}=\rho(E_i), \]
and similarly for $j$, so the matrices $\rho(A_{D,i})1_F-E_i$ and $\rho(A_{D,j})1_F-E_j$ are invertible. They also commute, and hence so do their inverses. So we can multiply \eqref{easyeq} by 
\[
(\rho(A_{D,i})1_F-E_i)^{-1}(\rho(A_{D,j})1_F-E_j)^{-1}
\]
to get \eqref{defy-new}. Thus we can define $y$ as claimed. Since $x\geq 0$ and $B_i$ has nonnegative entries, the expansion
\begin{align*}
(\rho(A_{D,i})1_F-E_i)^{-1}&=\rho(A_{D,i})^{-1}(1_F-\rho(A_{D,i})^{-1}E_i)^{-1}\\
&=\rho(A_{D,i})^{-1}\sum_{n=0}^\infty\rho(A_{D,i})^{-n}E^n_i
\end{align*}
shows that $y$ is nonnegative. 
To prove that $z:=(y,x,0)$ is an eigenvector of every vertex matrix~$A_j$ with eigenvalue $\rho(A_{D,j})$, we fix $j$. Then
\[
A_jz=\begin{pmatrix} E_i & B_i&\star\\ 0 & A_{D,i}&0\\0&0&A_{H,i}\end{pmatrix}\begin{pmatrix}
y\\x\\0\end{pmatrix}=\begin{pmatrix}
E_jy+B_jx\\A_{D,j}x\\0\end{pmatrix}=\begin{pmatrix}
E_jy+B_jx\\ \rho(A_{D,j})x\\0\end{pmatrix}.
\]
We now take $i\in L_D$, and work on the top block in $A_jz$. Since $E_iE_j=E_jE_i$ it follows that $E_j$ and $(\rho(A_{D,i})1_F-E_i)^{-1}$ commute. Thus 
\begin{align*}
E_jy+B_jx&=(\rho(A_{D,i})1_F-E_i)^{-1}E_jB_ix+B_jx\\
&=(\rho(A_{D,i})1_F-E_i)^{-1}\big(\rho(A_{D,j})1_F-(\rho(A_{D,j})1_F-E_j)\big)B_ix+B_jx.
\end{align*}
At this point we use \eqref{easyeq}, finding
\begin{align*}
E_jy+B_jx&=(\rho(A_{D,i})1_F-E_i)^{-1}\big(\rho(A_{D,j})B_ix-(\rho(A_{D,i})1_F-E_i)B_jx\big)+B_jx\\
&=\rho(A_{D,j})(\rho(A_{D,i})1_F-E_i)^{-1}B_ix-B_jx+B_jx\\
&=\rho(A_{D,j})y.
\end{align*}
Thus $z$ is an eigenvector of every $A_j$ with eigenvalue $\rho(A_{D,j})$, and this completes the proof of \eqref{defy2}.
\end{proof}

The next application of Proposition~\ref{evectors-new} has no analogue in \cite{aHKR2}. We find it curious that the proof of Theorem~\ref{orderrho} involves a non-trivial application of the subinvariance theorem from Perron--Frobenius theory, and wonder whether a more direct linear-algebraic proof is possible. We give such a proof for graphs with three components in Appendix~\ref{app3vertex}. (If so, we could then deduce the stronger-looking Proposition~\ref{evectors-new} from \cite[Proposition~6.1]{aHKR2}.) 

In the statement of Theorem~\ref{orderrho}, we have removed the hereditary set $H$ from the set-up of Proposition~\ref{evectors-new}; if there is such a set, we can apply Theorem~\ref{orderrho} to $\Lambda\backslash  H$, but then we only get information about $\rho(A_{C,i})$ for components $C\in \Cc$ with $C\subset \Lambda^0\backslash H$.

\begin{thm}\label{orderrho}
Suppose that $\Lambda$ is a finite $k$-graph without sinks or sources, and that the subgraphs $\{\Lambda_C:C\in \Cc\}$ are all coordinatewise irreducible. Suppose that $D\in \Cc$ is hereditary, and that $C\Lambda^{e_i}D\not=\emptyset$ for all $C\in \Cc$ and $1\leq i\leq k$. If there exists $j\in\{1,\dots,k\}$ such that $\rho(A_{C,j})<\rho(A_{D,j})$ for all $C\in \Cc\backslash \{D\}$, then
\begin{equation*}
\rho(A_{C,i})<\rho(A_{D,i})\quad\text{for all $C\in \Cc\backslash \{D\}$ and $1\leq i\leq k$.}
\end{equation*}
\end{thm}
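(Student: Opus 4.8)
The hypotheses of Theorem~\ref{orderrho} are exactly the hypotheses needed to invoke Proposition~\ref{evectors-new}: the set $H=\{v:v\Lambda D=\emptyset\}$ is empty (because $C\Lambda^{e_i}D\neq\emptyset$ for all $C$, so every vertex reaches $D$), the existence of $j$ with $\rho(A_{C,j})<\rho(A_{D,j})$ for all $C\neq D$ means $j\in L_D$, so $L_D\neq\emptyset$, and we take $x$ to be the common unimodular Perron--Frobenius eigenvector of the $\{A_{D,i}\}$, which exists by coordinatewise irreducibility via \cite[Lemma~2.1]{aHLRS2}. Proposition~\ref{evectors-new}\eqref{defy2} then produces a nonnegative vector $z=(y,x)$ on $\Lambda^0=F\sqcup D$ (with $F=\Lambda^0\backslash D$) which is a simultaneous eigenvector: $A_iz=\rho(A_{D,i})z$ for every $i$. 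Moreover $x$ is strictly positive on $D$ (being a Perron--Frobenius eigenvector of an irreducible matrix), so $z$ is strictly positive on all of $D$.

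\textbf{Main step.} Fix $i\in\{1,\dots,k\}$ and a component $C\in\Cc\backslash\{D\}$; I must show $\rho(A_{C,i})<\rho(A_{D,i})$. Restricting the eigenvector equation $A_iz=\rho(A_{D,i})z$ to the coordinates in $C$ and discarding the nonnegative contributions from components other than $C$ that $C$ feeds into (using the block upper-triangular form of $A_i$ after ordering $\Lambda^0$ so that components respect the partial order, as recalled in the background section), we get
\[
A_{C,i}\,(z|_C)\leq \rho(A_{D,i})\,(z|_C).
\]
Now $z|_C\geq 0$; I claim it is nonzero. Indeed $C\Lambda^{e_i}D\neq\emptyset$, so $C\leq D$ in the component order; pick $v\in C$, $w\in D$ with $v\Lambda w\neq\emptyset$; then the $v$-coordinate of $A_i^Nz$ for suitable $N$ picks up a strictly positive multiple of $z_w>0$, forcing some coordinate of $z|_C$ to be positive. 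Since $A_{C,i}$ is irreducible and $z|_C$ is a nonzero nonnegative subinvariant vector, the subinvariance theorem \cite[Theorem~1.6]{Sen} gives $\rho(A_{C,i})\leq\rho(A_{D,i})$. To upgrade this to a strict inequality, observe that if equality held, the same theorem would force $A_{C,i}(z|_C)=\rho(A_{D,i})(z|_C)$ and $z|_C$ strictly positive; but then the $v$-row of the genuine equation $A_iz=\rho(A_{D,i})z$ reads
\[
\rho(A_{D,i})z_v=(A_{C,i}(z|_C))_v+\big(\text{strictly positive terms from }v\Lambda^{e_i}(\Lambda^0\backslash C)\big)=\rho(A_{D,i})z_v+(\text{positive}),
\]
a contradiction, since $v$ has an edge of colour $e_i$ into $D\neq C$ and the corresponding coordinate of $z$ is strictly positive. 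Hence $\rho(A_{C,i})<\rho(A_{D,i})$.

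\textbf{Expected obstacle.} The linear-algebraic heart — producing the global eigenvector $z$ with the right support and positivity — is entirely handled by Proposition~\ref{evectors-new}, so the remaining work is bookkeeping with the block structure of the $A_i$ and two careful applications of the subinvariance theorem. The one point that needs genuine care is the strictness argument: one must be sure that some vertex $v\in C$ actually has an edge of colour $e_i$ leaving $C$ into a component where $z$ is strictly positive. The hypothesis $C\Lambda^{e_i}D\neq\emptyset$ delivers exactly this — there is an $e_i$-coloured edge from some $v\in C$ to $D$, and $z$ is strictly positive on $D$ — so the contradiction goes through. I also want to double-check that $z$ need not be positive on other intermediate components; but this is irrelevant, since only positivity on $D$ (guaranteed) is used. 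It is this indirect route through Perron--Frobenius theory, rather than a transparent manipulation of the eigenvalue equations, that the authors flag as making them ``nervous'', which is why they provide the alternative three-vertex argument in the appendix.
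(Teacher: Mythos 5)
Your proposal is correct and follows essentially the same route as the paper: invoke Proposition~\ref{evectors-new} to extend the Perron--Frobenius eigenvector of the $\{A_{D,i}\}$ to a global nonnegative eigenvector $z$, restrict the eigenvalue equation to the block of $C$, and apply the subinvariance theorem \cite[Theorem~1.6]{Sen}. The only (immaterial) difference is that the paper gets strictness in one step by observing that $z|_C$ is subinvariant but not invariant for $A_{C,i}$ (since $A_{C,D,i}x\neq 0$), whereas you reach the same conclusion by contradiction via the ``equality forces an eigenvector'' clause of the same theorem.
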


\begin{proof}
Since $\Lambda_D$ is coordinatewise irreducible, its vertex matrices $\{A_{D,i}:1\leq i\leq k\}$ are a commuting family of irreducible matrices, and hence by \cite[Lemma~2.1]{aHLRS2} have a common unimodular Perron--Frobenius eigenvector $x$. By assumption,  the set $L_D$ in Proposition~\ref{evectors-new} is nonempty, and hence that proposition gives a vector $y\in [0,\infty)^{\Lambda^0\backslash D}$ such that $z:=(y,x)$ is an eigenvector of each $A_{i}$ with eigenvalue $\rho(A_{D,i})$.

Now we take $C\in \Cc\backslash\{D\}$, and consider the block $y|_C$. Fix $i\in\{1,\dots,k\}$. The $C\times C$ block of $E_i$ is $A_{C,i}$, and the $C\times D$ block of $B_i$ is $A_{C,D,i}$. Thus
\begin{equation}\label{z|Csubinv}
A_{C,i}y|_C\leq A_{C,i}y|_C+A_{C,D,i}x\leq (A_iz)|_C=\rho(A_{D,i})z|_C=\rho(A_{D,i})y|_C.
\end{equation}
Since $C\Lambda^{e_i}D\not=\emptyset$, and since $x$ has strictly positive entries, the vector $A_{C,D,i}x$ is nonzero. So $y|_C\not=0$ and the first inequality in \eqref{z|Csubinv} is strict. So \eqref{z|Csubinv} implies that $y|_C$ is a subinvariant vector for the irreducible matrix $A_{C,i}$, and that it is not an eigenvector. So the subinvariance theorem \cite[Theorem~1.6]{Sen} implies that $\rho(A_{C,i})<\rho(A_{D,i})$.
\end{proof}

The next corollary strengthens Theorem~\ref{orderrho} by allowing longer singly-coloured bridges between components.

\begin{cor}
\label{orderrho2}
Suppose that $\Lambda$ is a finite $k$-graph without sinks or sources, and that the subgraphs $\{\Lambda_C:C\in \Cc\}$ are all coordinatewise irreducible. Suppose  that $D\in \Cc$ is hereditary and satisfies $C\Lambda^{\NN e_i}D\not=\emptyset$ for all $C\in \Cc$ and $1\leq i\leq k$. If there exists $j\in\{1,\dots,k\}$ such that $\rho(A_{C,j})<\rho(A_{D,j})$ for all $C\in \Cc\backslash \{D\}$, then
\begin{equation*}
\rho(A_{C,i})<\rho(A_{D,i})\quad\text{for all $C\in \Cc\backslash \{D\}$ and $1\leq i\leq k$.}
\end{equation*}
\end{cor}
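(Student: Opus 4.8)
The plan is to run the argument of Theorem~\ref{orderrho}, but with each vertex matrix $A_i$ replaced by a suitable power $A_i^n$, so that a long singly-coloured bridge from $D$ to $C$ becomes visible in a single matrix entry.

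First, as in the proof of Theorem~\ref{orderrho}, coordinatewise irreducibility of $\Lambda_D$ gives via \cite[Lemma~2.1]{aHLRS2} a common unimodular Perron--Frobenius eigenvector $x$ of the matrices $\{A_{D,i}:1\leq i\leq k\}$, and the hypothesis on $j$ places $j$ in the set $L_D$ of \eqref{Dhered-new}, which is therefore nonempty. So Proposition~\ref{evectors-new} applies and produces a nonnegative vector $y$ such that, with respect to $\Lambda^0=F\sqcup D\sqcup H$, the vector $z:=(y,x,0)$ is an eigenvector of every $A_i$ with eigenvalue $\rho(A_{D,i})$. Since $C\Lambda^{\NN e_i}D\neq\emptyset$ forces $C\Lambda D\neq\emptyset$, each $C\in\Cc\setminus\{D\}$ lies in $F$.

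Now fix $C\in\Cc\setminus\{D\}$ and $i\in\{1,\dots,k\}$, and choose $n\geq1$ with $C\Lambda^{ne_i}D\neq\emptyset$. Ordering $\Lambda^0$ so that $A_i$ is block upper triangular with the matrices $\{A_{C',i}\}$ on the diagonal, the $(C,C)$-block of $A_i^n$ is $A_{C,i}^n$, while its $(C,D)$-block is the $C\times D$ matrix $M$ with $M(v,w)=|v\Lambda^{ne_i}w|$, which is nonzero by the choice of $n$. Applying $A_i^n$ to $z$ and restricting to the $C$-block, nonnegativity of all entries gives
\begin{equation*}
\rho(A_{D,i})^n\,y|_C=(A_i^n z)|_C\ \geq\ A_{C,i}^n\,y|_C+Mx\ \geq\ A_{C,i}^n\,y|_C.
\end{equation*}
Since $x$ has strictly positive entries, $Mx\neq0$; in particular $y|_C\neq0$, and the first inequality is strict in at least one coordinate. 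Let $u$ be a left Perron--Frobenius eigenvector of the irreducible matrix $A_{C,i}$, so $u$ has strictly positive entries and $u\,A_{C,i}^n=\rho(A_{C,i})^n\,u$. Multiplying the displayed inequality on the left by $u$ and using $u\cdot(y|_C)>0$ and $u\cdot(Mx)>0$ yields $\rho(A_{C,i})^n\,u\cdot(y|_C)<\rho(A_{D,i})^n\,u\cdot(y|_C)$, hence $\rho(A_{C,i})^n<\rho(A_{D,i})^n$ (note $\rho(A_{D,i})>0$ since $\Lambda$ has no sources or sinks), and therefore $\rho(A_{C,i})<\rho(A_{D,i})$.

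The only genuine subtlety is that $A_{C,i}^n$ need not be irreducible even though $A_{C,i}$ is, so one cannot simply invoke the subinvariance theorem for $A_{C,i}^n$ as in Theorem~\ref{orderrho}; pairing with the strictly positive left eigenvector of $A_{C,i}$ circumvents this. (Alternatively one can apply \cite[Theorem~1.6]{Sen} to $A_{C,i}$ with the weaker subinvariance $A_{C,i}y|_C\leq\rho(A_{D,i})y|_C$ read off from $A_iz=\rho(A_{D,i})z$, and then exclude the equality case using the strict inequality $A_{C,i}^n y|_C<\rho(A_{D,i})^n y|_C$ established above.)
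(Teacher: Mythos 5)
Your argument is correct, but it follows a genuinely different route from the one in the paper. The paper's proof keeps Theorem~\ref{orderrho} as a black box: it chooses a single $N_i$ for each colour with $C\Lambda^{N_ie_i}D\neq\emptyset$ for \emph{all} $C$, forms the subgraph $\Lambda(N)=\{\lambda:d(\lambda)\in N\NN^k\}$ with rescaled degree functor, checks that $\Lambda(N)$ satisfies the hypotheses of Theorem~\ref{orderrho} (which requires some care, since the powers $A_{C,i}^{N_i}$ must remain irreducible --- the paper adjusts $N_i$ when some $A_{C,i}$ is a permutation matrix), and then reads off the conclusion from $\rho(A_{C,i}^{N_i})=\rho(A_{C,i})^{N_i}$. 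You instead rerun the internals of the proof of Theorem~\ref{orderrho}: you take the eigenvector $z=(y,x,0)$ from Proposition~\ref{evectors-new}, apply $A_i^n$ for an exponent $n=n(C,i)$ chosen separately for each pair, and replace the appeal to the subinvariance theorem by pairing with a strictly positive left Perron--Frobenius eigenvector of $A_{C,i}$. This buys two things: you never need a uniform $N_i$ across components, and you never need $A_{C,i}^n$ to be irreducible (you correctly flag that this can fail and your left-eigenvector pairing sidesteps it cleanly, since $uA_{C,i}^n=\rho(A_{C,i})^nu$ holds regardless). The paper's approach is shorter on the page but pushes the work into verifying that $\Lambda(N)$ is a legitimate instance of Theorem~\ref{orderrho}; yours is self-contained and component-by-component. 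All the individual steps check out: $C\subset F$ because $C\Lambda D\neq\emptyset$ and $C$ is strongly connected; the $(C,C)$-block of $A_i^n$ is indeed $A_{C,i}^n$ (paths between vertices of a strongly connected component cannot leave it); $Mx\neq0$ forces $y|_C\neq0$; and $u\cdot(y|_C)>0$, $u\cdot(Mx)>0$ give the strict inequality of $n$th powers, hence of the spectral radii.
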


\begin{proof}
For each $C\in \Cc$ and $i$ there exists $N_{C,i}\in \NN$ such that $C\Lambda^{N_{C,i} e_i}D\not=\emptyset$. Since $\Lambda_C$ and $\Lambda_D$ are coordinatewise irreducible, we then have $C\Lambda^{p e_i}D\not=\emptyset$ for all $p\geq N_{C,i}$. Thus there exist $N_i\in \NN$ such that $C\Lambda^{N_i e_i}D\not=\emptyset$ for all $C$. (If some $A_{C,i}$ is a permutation matrix, then also choose $N_i$ coprime to $|C|$, so that $A_{C,i}^{N_i}$ is also irreducible.) Now we set $N:=(N_1,\dots,N_k)$ and  
\[
N\NN^k:=\big\{Nn:=(N_1n_1,\dots,N_kn_k):n\in \NN^k\big\}. 
\]
Then $\Lambda(N):=\{\lambda\in \Lambda:d(\lambda)\in N\NN^k\}$ is a $k$-graph, 
with the same range and source maps as $\Lambda$  and degree functor $d(N):\Lambda(N)\to \NN^k$ defined by $Nd(N)(\lambda)=d(\lambda)$. The choice of the $N_i$ ensures that $\Lambda(N)$ satisfies the hypotheses of Theorem~\ref{orderrho}. Since the vertex matrices $A(N)_i$ of $\Lambda(N)$ satisfy $A(N)_{C,i}=A_{C,i}^{N_i}$, applying Theorem~\ref{orderrho} to $\Lambda(N)$ gives the result.
\end{proof}

\begin{example}\label{ex2dumbbell}
We consider a $2$-graph $\Lambda$ with skeleton of the form shown in Figure~\ref{2dumbbell} (known as a dumbbell graph).
\begin{figure}[h]
\begin{tikzpicture}[scale=1.5]
 \node[inner sep=0.5pt, circle] (v) at (0,0) {$v$};
    \node[inner sep=0.5pt, circle] (w) at (2,0) {$w$};
    \draw[-latex, blue] (v) edge [out=320, in=220, loop, min distance=30, looseness=2.5] (v);
\draw[-latex, red, dashed] (v) edge [out=40, in=140, loop, min distance=30, looseness=2.5] (v);
\draw[-latex, blue] (w) edge [out=220, in=320, loop, min distance=30, looseness=2.5] (w);
\draw[-latex, red, dashed] (w) edge [out=140, in=40, loop, min distance=30, looseness=2.5] (w);
\draw[-latex, blue] (w) edge [out=195, in=345]   (v);
\draw[-latex, red, dashed] (w) edge [out=165, in=15]   (v);
\node at (2.55, 0.4) {$n_2$};
\node at (2.55,-0.4) {$n_1$};
\node at (1.0,-.35) {$p_1$};
\node at (1.0,.35) {$p_2$}; 
\node at (-0.55,-0.4) {$m_1$};
\node at (-.55, 0.4) {$m_2$};
\end{tikzpicture}
\caption{A dumbbell $2$-graph with $2$ components.}\label{2dumbbell}
\end{figure}
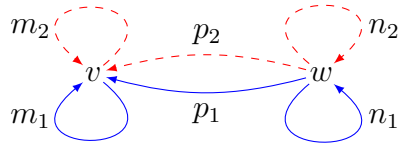
With the vertex set ordered alphabetically, the vertex matrices of $\Lambda$ are
\[
A_1=\begin{pmatrix}m_1&p_1\\0&n_1\end{pmatrix}\quad\text{and }\quad 
A_2=\begin{pmatrix}m_2&p_2\\0&n_2\end{pmatrix}.
\]
The factorisation property implies that $A_1A_2=A_2A_1$, and
\begin{align}
\notag(A_1A_2)(v,w)=(A_2A_1)(v,w)&\Longleftrightarrow m_1p_2+p_1n_2=m_2p_1+p_2n_1\\
\label{condforskel-new}&\Longleftrightarrow (n_2-m_2)p_1=(n_1-m_1)p_2.
\end{align}
Thus the graph in Figure~\ref{2dumbbell} is the skeleton of a $2$-graph if and only if \eqref{condforskel-new} holds.

If $p_1$ and $p_2$ are both nonzero, then \eqref{condforskel-new} implies that $n_1< m_1\Longleftrightarrow n_2< m_2$, as predicted by Theorem~\ref{orderrho}. However, if exactly one $p_i$ is zero, say $p_1=0$, then \eqref{condforskel-new} implies that $m_1=n_1$ and imposes no restriction on $m_2$ and $n_2$. So we cannot remove the hypotheses $C\Lambda^{e_i}D\not =\emptyset$ from Theorem~\ref{orderrho} or $C\Lambda^{\NN e_i}D\not=\emptyset$ from Corollary~\ref{orderrho2} (though we could possibly weaken them if we wanted to allow trivial strongly connected components, as in \cite[\S8]{aHKR2}, for example).
\end{example}

\section{Dominant components}\label{sec:dom}

The other main general result in \cite{aHKR2} is Theorem~6.5, which describes the KMS$_1$ states of $\TC^*(\Lambda)$ when there is a hereditary component where all the spectral radii are attained. Here we seek a version of \cite[Theorem~6.5]{aHKR2} for a non-preferred dynamics.  

From now on we assume that \[\rho(A_{C, i})>1\text{\ for  $1\leq i\leq k$ and $C\in \Cc$}.\]  Since we usually assume that the graphs $\Lambda_C$ are coordinatewise irreducible, the assumption $\rho(A_{C,i})>1$ merely removes the possibility that $\Lambda_{C,i}$ consists of a single cycle, which even for $1$-graphs is known to be an exceptional case.

\begin{prop}\label{extendKMS}
Suppose that $\Lambda$ is a finite $k$-graph with no sources or sinks, and choose a vector $r\in (0,\infty)^{k}$  satisfying \eqref{assondyn}. We suppose that $D\in \Cc$ is hereditary, that $\Lambda_D$  is coordinatewise irreducible, and that there exists $j\in\{1,\dots, k\}$ such that
\begin{equation}\label{convA}\rho(A_{C,j})<\rho(A_{D,j})\quad\text{for $C\in \Cc\backslash\{D\}$ such that $C\Lambda D\not=\emptyset$.}
\end{equation}
We let $x$ be the common Perron--Frobenius eigenvector of the $\{A_{D,i}:1\leq i\leq k\}$, and take $z=(y,x,0)$ as in Proposition~\ref{evectors-new}. Write $b:=\|z\|_1$. Then there is a KMS$_1$ state $\psi$ of $(\TC^*(\Lambda),\alpha^r)$ such that
\begin{equation}\label{defpsi}
\psi(t_\mu t_\nu^*)=\delta_{\mu,\nu}e^{-r\cdot d(\mu)}b^{-1}z_{s(\mu)}.
\end{equation}
This state factors through a state of $C^*(\Lambda)$ if and only if $r_i=\ln\rho(A_{D,i})$ for all $i$.
\end{prop}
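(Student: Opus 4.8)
The plan is to construct $\psi$ directly from the formula \eqref{defpsi} and to verify the KMS$_1$ condition on the spanning elements $t_\mu t_\nu^*$, exactly as in the proof of \cite[Theorem~6.5]{aHKR2}, but keeping track of the fact that the eigenvalues of the $A_i$ at the eigenvector $z$ are $\rho(A_{D,i})$ rather than $e^{r_i}$, and that these two quantities need not agree. First I would record the key vector identity supplied by Proposition~\ref{evectors-new}: $z=(y,x,0)$ satisfies $A_iz=\rho(A_{D,i})z$ for every $i$, so $A^nz=\rho(A_D)^nz$ for $n\in\NN^k$, where $\rho(A_D)^n:=\prod_i\rho(A_{D,i})^{n_i}$. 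Set $m:=b^{-1}z$, a nonnegative unit-$\ell^1$ vector with $A^nm=\rho(A_D)^nm$. Since $\rho(A_{D,i})\le\rho(A_i)\le e^{r_i}$ by \eqref{assondyn} and the ordering of spectral radii recalled in the background, we have $\rho(A_D)^n\le e^{r\cdot n}$, i.e. $A^nm\le e^{r\cdot n}m$; this is precisely the subinvariance needed for the construction of a KMS state. I would then invoke the standard machinery (e.g. \cite[Proposition~4.1(b)]{aHLRS2} or \cite[Theorem~4.3]{aHKR}): any vector $m\ge0$ with $\sum_v m_v=1$ and $A^nm\le e^{r\cdot n}m$ for all $n$ yields a KMS$_1$ state $\psi$ of $(\TC^*(\Lambda),\alpha^r)$ with $\psi(t_\mu t_\nu^*)=\delta_{\mu,\nu}e^{-r\cdot d(\mu)}m_{s(\mu)}$, which is exactly \eqref{defpsi}. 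Alternatively, if one wants to be self-contained, one defines $\psi$ by \eqref{defpsi}, checks positivity by exhibiting the corresponding GNS-type representation on $\ell^2(\{\mu:s(\mu)\in\operatorname{supp}m\})$ with the Toeplitz operators, and verifies (T1)--(T3) together with the KMS relation $\psi(t_\mu t_\nu^*\cdot t_\sigma t_\tau^*)=\psi(t_\sigma t_\tau^*\cdot\alpha^r_{i}(t_\mu t_\nu^*))$ by the bookkeeping with $\Lambda^{\min}$ that is by now routine in this circle of papers.

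For the final ``if and only if'' claim about factoring through $C^*(\Lambda)$, recall that a state $\phi$ of $\TC^*(\Lambda)$ factors through $C^*(\Lambda)$ exactly when $\phi\big(q_v-\sum_{\lambda\in v\Lambda^{e_i}}t_\lambda t_\lambda^*\big)=0$ for all $v$ and all $i$, equivalently when the associated vector $m=(\psi(q_v))$ satisfies $A_im=e^{r_i}m$ for every $i$ (subinvariance becomes equality). Here $m=b^{-1}z$ has $A_im=\rho(A_{D,i})m$, and $m\neq0$, so $A_im=e^{r_i}m$ holds for all $i$ if and only if $\rho(A_{D,i})=e^{r_i}$, i.e. $r_i=\ln\rho(A_{D,i})$, for all $i$. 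For the forward direction this is immediate from comparing eigenvalues of the same nonzero vector. For the converse, when $r_i=\ln\rho(A_{D,i})$ for all $i$ one gets $A^nm=\rho(A_D)^nm=e^{r\cdot n}m$, so $\psi$ kills each $q_v-\sum_{\lambda\in v\Lambda^n}t_\lambda t_\lambda^*$ and hence descends to a (necessarily KMS$_1$) state of $C^*(\Lambda)$; this is the standard passage used in \cite[Theorem~6.1]{aHLRS2} and \cite[Theorem~6.5]{aHKR2}.

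The one point needing a little care --- and the main potential obstacle --- is well-definedness and positivity of $\psi$, i.e. that the formula \eqref{defpsi} really does extend to a state and not merely a functional on the dense $*$-subalgebra $\operatorname{span}\{t_\mu t_\nu^*\}$. This is handled by the subinvariance $A^nm\le e^{r\cdot n}m$: it guarantees that the sums appearing when one multiplies two monomials and applies $\psi$ remain dominated, so the candidate functional extends continuously and is positive. Since Proposition~\ref{evectors-new} delivers $z\ge0$ and the eigenvalue identity, and since \eqref{assondyn} together with $\rho(A_{D,i})\le\rho(A_i)$ gives the needed inequality $\rho(A_D)^n\le e^{r\cdot n}$, all hypotheses of the cited KMS-existence results are met and no genuinely new estimate is required --- the work is entirely in invoking the right black box and in the eigenvalue comparison for the factoring statement. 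Note also that hypothesis \eqref{convA} is exactly what makes $L_D\neq\emptyset$ (indeed $j\in L_D$), so Proposition~\ref{evectors-new} applies and produces the $y$ used here.
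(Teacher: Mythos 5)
Your eigenvalue comparison for the ``factors through $C^*(\Lambda)$'' claim is correct and is essentially the computation in the paper. The genuine gap is in the existence step: the ``standard machinery'' you invoke does not exist in the form you state it. \cite[Proposition~4.1]{aHLRS2} gives only the \emph{necessary} direction (a KMS state produces a subinvariant vector) together with, in part~(b), the criterion for factoring through $C^*(\Lambda)$; it does not manufacture a state from a subinvariant vector. The construction result in that paper, Theorem~6.1, builds $\phi_\epsilon$ from $\epsilon=\prod_i(1-e^{-\beta r_i}A_i)m$ only when $e^{\beta r_i}>\rho(A_i)$ for every $i$, i.e.\ strictly above the critical inverse temperature, and that is exactly what fails here when some $r_j=\ln\rho(A_j)=\ln\rho(A_{D,j})$ (the case of interest, $D$ critical). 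Your fallback --- positivity via a spatial realisation on the path space, with convergence ``dominated'' by subinvariance --- also breaks down at $\beta=1$: the relevant normalising series is
\[
\sum_{\mu\in\Lambda}e^{-r\cdot d(\mu)}m_{s(\mu)}=\sum_{n\in\NN^k}e^{-r\cdot n}\|A^nm\|_1=\sum_{n\in\NN^k}\prod_{i=1}^k\bigl(e^{-r_i}\rho(A_{D,i})\bigr)^{n_i},
\]
which diverges as soon as one factor $e^{-r_j}\rho(A_{D,j})$ equals $1$; and the gap vector $\prod_i(1-e^{-r_i}\rho(A_{D,i}))b^{-1}z$ is then zero, so the $\phi_\epsilon$-construction produces nothing. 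Showing that this particular subinvariant vector really does come from a KMS$_1$ state is precisely the content of the proposition, so it cannot be quoted as a black box; subinvariance alone does not ``dominate the sums''.

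The paper closes this gap with an approximation argument that your outline is missing: choose $\beta_p\downarrow 1$, note that $e^{\beta_pr_i}>\rho(A_i)\geq\rho(A_{D,i})$, so that
\[
\epsilon^p:=\prod_{i=1}^k\bigl(1-e^{-\beta_pr_i}A_i\bigr)b^{-1}z=\prod_{i=1}^k\bigl(1-e^{-\beta_pr_i}\rho(A_{D,i})\bigr)b^{-1}z\geq 0
\]
because $z$ is a common eigenvector; then \cite[Theorem~6.1]{aHLRS2} yields KMS$_{\beta_p}$ states $\phi_{\epsilon^p}$, and a weak* cluster point of these is a KMS$_1$ state satisfying \eqref{defpsi}. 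If you replace your black box by this limiting argument, the remainder of your write-up goes through.
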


\begin{proof}[Outline of proof]
Since the argument is very similar to that of \cite[Proposition~6.3]{aHKR2}, we merely outline the argument.
Since $\{A_{D,i}:1\leq i\leq k\}$ is a commuting family of irreducible nonnegative matrices, Lemma~2.1 of \cite{aHLRS2} implies that they have a common unimodular Perron--Frobenius eigenvector $x\in (0,\infty)^D$.  Let $\Lambda^0=F\sqcup D\sqcup H$ be the decomposition of Proposition~\ref{evectors-new}. The proposition then implies that there is a vector $y\in[0,\infty)^{F}$ such that $z:=(y,x,0)$ is a common eigenvector of each $A_i$ with eigenvalue $\rho(A_{D,i})$. 

Following the proof of \cite[Proposition~6.3]{aHKR2}, we choose a decreasing sequence $\{\beta_p\}\subset (1,\infty)$ such that $\beta_p\to 1$ as $p\to \infty$. Then 
\[
\beta_pr_i\geq \beta_p\ln\rho(A_i)>\ln \rho(A_i)\geq \ln\rho(A_{D,i})\quad\text{ for all $i$,}
\]
and hence
\[
\epsilon^p:=\prod_{i=1}^k\big(1-e^{-\beta_pr_i}A_i\big)b^{-1}z=\prod_{i=1}^k\big(1-e^{-\beta_pr_i}\rho(A_{D,i})\big)b^{-1}z\geq 0.
\]
Then \cite[Theorem~6.1]{aHLRS2} gives KMS$_{\beta_p}$ states $\phi_{\epsilon^p}$ of $(\TC^*(\Lambda),\alpha^r)$, from which a weak* compactness argument gives a KMS$_1$ state $\psi$ of $(\TC^*(\Lambda),\alpha^r)$ such that
\begin{equation*}
\psi(t_\mu t^*_\nu)=\lim_{p\to\infty}\delta_{\mu,\nu}e^{-\beta_p r\cdot d(\mu)}b^{-1}z_{s(\mu)}=\delta_{\mu,\nu}e^{-r\cdot d(\mu)}b^{-1}z_{s(\mu)}.
\end{equation*}

By \cite[Lemma~6.2]{AaHR}, $\psi$ factors through $q:\TC^*(\Lambda)\to C^*(\Lambda)$  if and only if \[
\psi\Big(q_v-\sum_{\lambda\in v\Lambda^n}t_{\lambda} t_{\lambda}\Big)=0\quad\text{for all $v\in \Lambda^0$ and $n\in \NN^k$.}
\]
Thus for the last comment, we take $v\in \Lambda^0$, $n\in \mathbb{N}^k$ and compute:
 \begin{align*}
\psi\bigg(\sum_{\lambda\in v\Lambda^n}t_\lambda t_\lambda^*\bigg)
&=\sum_{\lambda\in v\Lambda^n}e^{-r\cdot n}b^{-1}z_{s(\lambda)}
=\sum_{w\in \Lambda^0} \bigg(\prod_{i=1}^k A_i^{n_i}\bigg)(v,w)e^{-r\cdot n}b^{-1}z_w\\
&=e^{-r\cdot n}b^{-1}\bigg(\bigg(\prod_{i=1}^k A_i^{n_i}\bigg)z\bigg)_v=e^{-r\cdot n}b^{-1}\prod_{i=1}^k \rho(A_{D,i})^{n_i}z_v\notag\\
&=b^{-1}\prod_{i=1}^k (e^{-r_i}\rho(A_{D,i}))^{n_i}z_v
=\prod_{i=1}^k (e^{-r_i}\rho(A_{D,i}))^{n_i}\psi(q_v).\notag
\end{align*}
It follows that $\psi$ factors through $q$ if and only if $\prod_{i=1}^k (e^{-r_i}\rho(A_{D,i}))^{n_i}=1$ for every $n\in \NN^k$. Since each $e^{-r_i}\rho(A_{D,i})\leq 1$, this last condition holds if and only if  $r_i=\ln\rho(A_{D,i})$ for all $i$.
\end{proof}

We can now state our new version of \cite[Theorem~6.5]{aHKR2}. Notice that the hypothesis \eqref{convAstrong} is substantially stronger than the hypothesis \eqref{convA} in Proposition~\ref{extendKMS}; it implies that there are no other critical components. This hypothesis was crucial in the proof of \cite[Theorem~6.5]{aHKR2}, which we follow.

\begin{thm}\label{dominant2cpt}
Suppose that $\Lambda$ is a finite $k$-graph with no sources or sinks, and choose a vector $r\in (0,\infty)^{k}$  satisfying \eqref{assondyn}, and with rationally independent coordinates. 
We suppose that $D\in \Cc$ is hereditary, that $\Lambda_D$ is coordinatewise irreducible, that there exists $j$ such that $r_j=\ln\rho(A_{D,j})$, and such that
\begin{equation}\label{convAstrong}
\rho(A_{C,i})<\rho(A_{D,i})\quad\text{for $1\leq i\leq k$ and  $C\in \Cc\backslash\{D\}$.}
\end{equation}
Write $q_D$ for the quotient map of $\TC^*(\Lambda)$ onto $\TC^*(\Lambda\backslash D)$. Then every KMS$_1$ state of $(\TC^*(\Lambda),\alpha^r)$ is a convex combination of the state $\psi$ of Proposition~\ref{extendKMS} and a state $\phi\circ q_D$ lifted from a KMS$_1$ state $\phi$ of $(\TC^*(\Lambda\backslash D),\alpha^r)$. \end{thm}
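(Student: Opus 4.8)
The plan is to follow the proof of \cite[Theorem~6.5]{aHKR2}: given an arbitrary $\KMS_1$ state we peel off the ``$D$-part'' and show that what remains factors through $\TC^*(\Lambda\backslash D)$. Fix a $\KMS_1$ state $\phi$ of $(\TC^*(\Lambda),\alpha^r)$ and write $m^\phi=(\phi(q_v))_{v\in\Lambda^0}$. First note that $j\in K$: by \eqref{convAstrong}, $\rho(A_j)=\max_{C\in\Cc}\rho(A_{C,j})=\rho(A_{D,j})$, so $r_j=\ln\rho(A_{D,j})$ gives $r_j^{-1}\ln\rho(A_j)=1$ and $e^{r_j}=\rho(A_{D,j})$. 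By \cite[Proposition~4.1(a)]{aHLRS2} we have $A_im^\phi\le e^{r_i}m^\phi$ for every $i$; reading off the $D$-block for $i=j$ and applying the subinvariance theorem \cite[Theorem~1.6]{Sen} to the irreducible matrix $A_{D,j}$ shows that either $m^\phi|_D=0$ or $m^\phi|_D=\lambda x$ for a unique $\lambda>0$. In the first case $\phi$ vanishes on $\{q_v:v\in D\}$, hence (by \cite[Lemma~6.2]{AaHR}) on the ideal $I_D$, so $\phi=\phi'\circ q_D$ for a state $\phi'$ of $\TC^*(\Lambda\backslash D)$ that is automatically a $\KMS_1$ state for $\alpha^r$; this is the required combination with weight $0$ on $\psi$. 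So from now on assume $m^\phi|_D=\lambda x$ with $\lambda>0$, and set $t:=\lambda b$ and $c:=\phi(\sum_{v\in D}q_v)=\lambda\|x\|_1$.

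The first substantial step is to show that $\phi$ and $t\psi$ agree on $I_D$. The projection $a_D:=\sum_{v\in D}q_v$ is full in $I_D$, with full corner $a_DI_Da_D=a_D\TC^*(\Lambda)a_D\cong\TC^*(\Lambda_D)$, so a positive functional on $I_D$ is determined by its restriction to this corner. Since $\Lambda_D$ is coordinatewise irreducible and $\rho(A_{D,i})=\rho(A_i)\le e^{r_i}$ forces the critical inverse temperature of $(\TC^*(\Lambda_D),\alpha^r)$ to be $1$, and since the coordinates of $r$ are rationally independent, $(\TC^*(\Lambda_D),\alpha^r)$ has a unique $\KMS_1$ state $\psi_D$ (\cite[Theorem~7.1]{aHLRS2}, \cite{aHKR}). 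Comparing the defining formulas, both $\phi$ and $t\psi$ restrict to the positive functional $c\,\psi_D$ on $\TC^*(\Lambda_D)$, so $\phi=t\psi$ on $I_D$ and $\phi-t\psi$ descends to a Hermitian functional $\rho$ on $\TC^*(\Lambda)/I_D\cong\TC^*(\Lambda\backslash D)$. Because $\phi$ and $\psi$ are $\KMS_1$ and $q_D$ intertwines the dynamics, $\rho$ satisfies the $\KMS_1$ condition for $\alpha^r$; writing $\iota\colon\TC^*(\Lambda\backslash D)\to\TC^*(\Lambda)$ for the canonical section, $\rho=(\phi-t\psi)\circ\iota$ and $\rho(q_v)=m^\phi_v-tb^{-1}z_v$ for $v\notin D$.

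The last, and hardest, step is to prove $\rho\ge 0$; this is exactly where \eqref{convAstrong} is essential, since it gives $\rho(A_{C,i})<\rho(A_{D,i})=\rho(A_i)\le e^{r_i}$ for every nontrivial component $C\subseteq\Lambda^0\backslash D$ and every $i$, so $\Lambda\backslash D$ has no critical component and $1>\beta_c(\Lambda\backslash D)$. By \cite[Theorem~6.1]{aHLRS2} the $\KMS_1$ functionals on $\TC^*(\Lambda\backslash D)$ are precisely the diagonal functionals $\phi_\eta$ whose excess vector $\eta:=\prod_i(1-e^{-r_i}A_{\Lambda\backslash D,i})m\ge 0$ (with $m$ the diagonal of $\phi_\eta$, each factor being invertible), so it suffices to show that $m':=(m^\phi-tb^{-1}z)|_{\Lambda^0\backslash D}$ satisfies $m'\ge 0$ and $\eta':=\prod_i(1-e^{-r_i}A_{\Lambda\backslash D,i})m'\ge 0$. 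For $m'\ge 0$: reading off the $F$-block of $A_jm^\phi\le e^{r_j}m^\phi$, discarding the (nonnegative) contribution of $m^\phi|_H$, and inverting the invertible matrix $\rho(A_{D,j})1_F-E_j$ (here $j\in L_D=\{1,\dots,k\}$ and $\rho(E_j)<\rho(A_{D,j})$) gives $m^\phi|_F\ge\lambda y$; with $m^\phi|_D=\lambda x$, $m^\phi|_H\ge 0$ and $z=(y,x,0)$ this yields $m'\ge 0$. The inequality $\eta'\ge 0$ is the main obstacle: I expect it to come from a matrix computation relating $\prod_i(1-e^{-r_i}A_i)$ on $\Lambda$ to $\prod_i(1-e^{-r_i}A_{\Lambda\backslash D,i})$ on $\Lambda\backslash D$, using that $z$ is a common eigenvector of the $A_i$ with eigenvalues $\rho(A_{D,i})\le e^{r_i}$ (so $\prod_i(1-e^{-r_i}\rho(A_{D,i}))\ge 0$) together with the constraints on $m^\phi$ that follow from $\phi$ being a genuine $\KMS_1$ state rather than merely satisfying subinvariance (equivalently, that $\phi$ is a weak$^*$ limit of $\KMS_{\beta_p}$ states with $\beta_p\downarrow 1$, as in the construction of $\psi$). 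Granting $\rho\ge 0$, set $\phi':=(1-t)^{-1}\rho$; then $\|\rho\|=\rho(1)=\phi(1)-t\psi(1)=1-t$, so $t\in[0,1]$, $\phi'$ is a $\KMS_1$ state of $(\TC^*(\Lambda\backslash D),\alpha^r)$, and $\phi=t\psi+(1-t)(\phi'\circ q_D)$ is the desired convex combination.
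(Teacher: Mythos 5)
Your overall strategy is the one the paper follows (after \cite[Theorem~6.5]{aHKR2}): use subinvariance and \cite[Theorem~1.6]{Sen} to see that $m^\phi|_D$ is a multiple of the Perron--Frobenius eigenvector, peel off the corresponding multiple of $\psi$, and realise the remainder as a KMS$_1$ state of $(\TC^*(\Lambda\backslash D),\alpha^r)$ via \cite[Theorem~6.1]{aHLRS2}. Your derivation of $m'\geq 0$ by applying $(\rho(A_{D,j})1_F-E_j)^{-1}$ to the $F$-block of the subinvariance relation is a clean substitute for the paper's ``quick exit'' estimate, and summing it does give $t\leq 1$. But there are two genuine problems. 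The lesser one: your justification that $\phi=t\psi$ on $I_D$ is wrong as stated. A positive functional is \emph{not} determined by its restriction to a full corner (take $A=M_2(\CC)$, $p$ a rank-one projection, and the functionals $a\mapsto\operatorname{Tr}(Ta)$ for $T=\operatorname{diag}(1,1)$ and $T=\operatorname{diag}(1,2)$). What rescues the step is the KMS condition itself: for a spanning element $t_\lambda t_\mu^*$ of $I_D$ one has $s(\lambda)=s(\mu)\in D$, and the KMS$_1$ relation gives $\phi(t_\lambda t_\mu^*)=e^{-r\cdot d(\lambda)}\phi(q_{s(\lambda)}t_\mu^*t_\lambda q_{s(\lambda)})$, a value on the corner. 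You must either make this Laca--Raeburn-type reduction explicit, or sidestep it as the paper does by comparing only vertex values and invoking \cite[Proposition~3.1(b)]{aHLRS2} at the very end (this is where rational independence of the $r_i$ is actually used).

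The serious gap is the positivity $\eta'\geq 0$, which you correctly identify as the crux but do not prove: ``I expect it to come from a matrix computation'' is precisely the content the proof must supply, and it is not a formality. One has to show that the $E:=\Lambda^0\backslash D$ block of $\prod_{i=1}^k(1-e^{-r_i}A_i)m^\phi\geq 0$ (which is all that \cite[Proposition~4.1(a)]{aHLRS2} gives you --- no weak-$*$ approximation by KMS$_{\beta_p}$ states is needed or available here) is \emph{exactly equal} to $\prod_{i=1}^k(1_E-e^{-r_i}A_{E,i})\bigl(m^\phi-t\,m^\psi\bigr)\big|_E$. This identity depends delicately on ordering the product so that the factor $1-e^{-r_j}A_j$ with $e^{r_j}=\rho(A_{D,j})$ hits $m^\phi|_D$ first and annihilates the $D$-contribution exactly, after which the remaining cross-terms are absorbed using the eigenvector identity $(\rho(A_{D,j})1_F-E_j)y=B_jx$; this is why the paper's proof begins by permuting the colours so that a critical colour comes last. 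The computation is carried out in full, in the more general several-hereditary-components setting, in the proof of Theorem~\ref{critvshered}. As written, your argument establishes the routine parts and locates the hard step, but leaves that step unproved.
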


\begin{proof}
We begin by observing that we can swap the colours around to ensure that there exists $k'$ such that 
\[
r_i=\ln\rho(A_i)\Longleftrightarrow k'\leq i\leq k.
\]
In particular, we then have $r_k=\ln\rho(A_k)=\ln\rho(A_{D,k})$. 

Now suppose that $\theta$ is a KMS$_1$ state of $(\TC^*(\Lambda),\alpha^r)$. Then \cite[Proposition~4.1]{aHLRS2} implies that the vector $m^\theta:=(\,\theta(q_v)\,)\in [0,1]^{\Lambda^0}$ satisfies $A_im^\theta\leq e^{r_i}m^\theta$ for all $i$. Looking at the block structure of $A_i$ for the decomposition $\Lambda^0=(\Lambda^0\backslash D)\sqcup D$ shows that the vector $m^\theta|_D$ satisfies 
\[
A_{D,i}\big(m^\theta|_D\big)\leq e^{r_i}m^\theta|_D\quad\text{for $1\leq i\leq k$,}
\]
and hence
\[
A_{D,i}\big(m^\theta|_D\big)\leq \rho(A_i)m^\theta|_D=\rho(A_{D,i})m^\theta|_D\quad\text{for $k'\leq i\leq k$.}
\]
Since each $A_{D,i}$ is irreducible, the subinvariance theorem \cite[Theorem~1.6]{Sen} implies that 
\[
A_{D,i}\big(m^\theta|_D\big)=\rho(A_{D,i})m^\theta|_D\quad\text{for $k'\leq i\leq k$.}
\]
Since the matrices $\{A_{D,i}:1\leq i\leq k\}$ have just one common unimodular Perron--Frobenius eigenvector $x$, we deduce that there exists $a\in [0,\infty)$ such that $m^\theta|_D=ab^{-1}x$, for $b:=\|z\|_1$ as in Proposition~\ref{extendKMS}.

We could have $a=0$, in which case it follows from \cite[Lemma~6.2]{AaHR} that $\theta$ factors through $q_D$. We aim to prove that $a\leq 1$, as in the third and fourth paragraphs of the proof of \cite[Theorem~6.5]{aHKR2}, by showing that $\theta(q_v)\geq a\psi(q_v)$ for all $v$. As in \cite{aHKR2}, the interesting case is when $v$ belongs to a component $C$ with $C\Lambda D\not=\emptyset$, and for this we follow the calculation in the fourth paragraph, working in the coordinate graph $\Lambda_k$. Now we could have $a=1$, and then we get $\theta=\psi$, as in \cite{aHKR2}. (This uses the direction of \cite[Proposition~3.1(b)]{aHLRS2} which requires rational independence of the~$r_i$.)

So we are left with the case $0<a<1$, in which case we have to construct a KMS$_1$ state $\phi_\epsilon$ of $\TC^*(\Lambda\backslash D)$ by applying \cite[Theorem~6.1]{aHLRS2} to the graph $\Lambda\backslash D$. By \eqref{convAstrong} we have $r_i\geq \ln\rho(A_i)\geq \ln\rho(A_{D,i})>\ln\rho(A_{C,i})$ for $1\leq i\leq k$ and $C\in\Cc\backslash\{D\}$, and hence $r_i>\ln\rho(A_{\Lambda^0\backslash D,i})$ for all $i$.
Therefore KMS$_1$ states of $\TC^*(\Lambda\backslash D)$ have the form $\phi_\epsilon$ as described in \cite[Theorem~6.1]{aHLRS2}.\footnote{Since $\Lambda\backslash D$ could have sources, this application  depends on the observation at the start of \cite[\S8]{aHKR2} that \cite[Theorem~6.1]{aHLRS2} applies also to graphs with sources.}

Define $\kappa:=(1-a)^{-1}(m^\theta-am^\psi)|_{\Lambda^0\backslash D}$, write each $A_i$ in block form with $E_i:=A_{\Lambda^0\backslash D,i}$, and take 
\[
\epsilon:=\prod_{i=1}^k\big(1-e^{-r_i}E_i\big)\kappa.
\]
The argument in the seventh paragraph of the proof in \cite{aHKR2} shows that $\epsilon\geq 0$. (This is the reason we swapped the colours around at the start: the $k$th matrix $A_k$ plays a special role in that calculation, and it is crucial that $\rho(A_k)=\rho(A_{D,k})$.) We have
\begin{align*}
\|\kappa\|_1&=(1-a)^{-1}\Big(\sum_{v\in\Lambda^0\backslash 
D}\theta(q_v)- a\sum_{v\in\Lambda^0\backslash D}\psi(q_v)\Big)\\
&=(1-a)^{-1}\Big(1-\sum_{v\in D}\theta(q_v)-a\big(1-\sum_{v\in 
D}\psi(q_v)\big)\Big)\\
&=(1-a)^{-1}\Big(1-\sum_{v\in D}ab^{-1}x_v -a+a\sum_{v\in D}b^{-1}x_v \Big)=1.
\end{align*}
Thus $\epsilon$ belongs to the simplex $\Sigma_1$ of 
\cite[Theorem~6.1]{aHLRS2}. We then finish off by following the argument in the last two paragraphs of the proof in~\cite{aHKR2} to see that $\phi=(1-a)\phi_\epsilon\circ q_D+a\psi$.  (This is where we use again that the matrices $A_i$ satisfy \eqref{convAstrong}, because we need all the matrices $1-e^{-r_i}E_i$ to be invertible.)
\end{proof}

\begin{cor}\label{cor5.2-1}
Suppose that we have a $k$-graph $\Lambda$ with the properties in Theorem~\ref{dominant2cpt}, and suppose  in addition that $\Lambda\backslash D$ does not have sources. Then there is a KMS$_1$ state of $(C^*(\Lambda),\alpha^r)$ if and only if $r_i=\ln\rho(A_{D,i})$ for $1\leq i\leq k$.
\end{cor}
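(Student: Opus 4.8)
The plan is to combine Theorem~\ref{dominant2cpt} with the last sentence of Proposition~\ref{extendKMS}, using the extra hypothesis that $\Lambda\backslash D$ has no sources to bring Proposition~\ref{extendKMS} (or rather the relevant part of \cite{aHKR2}) to bear on the subgraph. First I would handle the easy direction: if $r_i=\ln\rho(A_{D,i})$ for all $i$, then Proposition~\ref{extendKMS} produces a KMS$_1$ state $\psi$ of $(\TC^*(\Lambda),\alpha^r)$ which factors through a state of $C^*(\Lambda)$, so there is a KMS$_1$ state of $(C^*(\Lambda),\alpha^r)$. (Note that under \eqref{convAstrong} we automatically have $r_i\geq\ln\rho(A_i)=\ln\rho(A_{D,i})$, so the condition $r_i=\ln\rho(A_{D,i})$ is really the condition that $r$ be the ``preferred'' vector for this setup.)

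For the converse, suppose $\chi$ is a KMS$_1$ state of $(C^*(\Lambda),\alpha^r)$ and let $\theta:=\chi\circ q$ be the lifted KMS$_1$ state of $(\TC^*(\Lambda),\alpha^r)$, where $q:\TC^*(\Lambda)\to C^*(\Lambda)$ is the quotient map. By Theorem~\ref{dominant2cpt}, $\theta=a\psi+(1-a)\phi\circ q_D$ for some $a\in[0,1]$ and some KMS$_1$ state $\phi$ of $(\TC^*(\Lambda\backslash D),\alpha^r)$. The idea is to show first that $a>0$, and then read off from the $\psi$-component that $r_i=\ln\rho(A_{D,i})$. To see that $a>0$, I would argue that a KMS$_1$ state of $C^*(\Lambda)$ cannot be supported entirely off $D$: if $a=0$ then $\theta=\phi\circ q_D$, so $\theta(q_v)=0$ for all $v\in D$; but then $\chi(q_v)=0$ for $v\in D$, and since $\chi$ is a state of $C^*(\Lambda)$ we have $\chi(q_v)=\chi\bigl(\sum_{\lambda\in v\Lambda^{n}}t_\lambda t_\lambda^*\bigr)$ for every $n$, and feeding in paths from vertices of $D$ (using that $D$ is strongly connected and $\rho(A_{D,i})>1$, hence $v\Lambda^{n}D\neq\emptyset$ grows) forces a contradiction with $\chi(q_v)\le 1$ in the standard way — exactly the kind of computation done inside the proof of Proposition~\ref{extendKMS} for the ``factors through'' criterion. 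Alternatively, and more cleanly, since $\Lambda\backslash D$ has no sources one can invoke Proposition~\ref{estnonpref}/the structure of KMS states on $C^*$-algebras to see that $\theta$ restricted to the $D$-part must be nonzero, i.e.\ $a>0$.

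Once $a>0$: because $\theta=\chi\circ q$ factors through $C^*(\Lambda)$, so does the ``difference'' — more precisely, $\theta\bigl(q_v-\sum_{\lambda\in v\Lambda^{n}}t_\lambda t_\lambda^*\bigr)=0$ for all $v,n$. Evaluate this at a vertex $v\in D$. Since $D$ is hereditary, all paths out of $v$ stay in $D$, so this relation involves only $\psi|_D$ and $(\phi\circ q_D)|_D$; but $q_D$ kills $q_v$ for $v\in D$, so the $\phi\circ q_D$ term contributes $0$, and we are left with $a\,\psi\bigl(q_v-\sum_{\lambda\in v\Lambda^n}t_\lambda t_\lambda^*\bigr)=0$, hence (as $a>0$) $\psi\bigl(q_v-\sum_{\lambda\in v\Lambda^n}t_\lambda t_\lambda^*\bigr)=0$ for all $v\in D$ and $n\in\NN^k$. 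Repeating the computation at the end of the proof of Proposition~\ref{extendKMS} with $z$ replaced by its restriction $x$ to $D$ gives $\prod_{i=1}^k(e^{-r_i}\rho(A_{D,i}))^{n_i}=1$ for all $n$, and since each factor $e^{-r_i}\rho(A_{D,i})\le 1$ (from \eqref{convAstrong} and \eqref{assondyn}) this forces $r_i=\ln\rho(A_{D,i})$ for all $i$, as required.

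The main obstacle is the step $a>0$: one has to rule out the possibility that the lifted KMS state concentrates entirely on the complement of $D$. This is where the extra hypothesis ``$\Lambda\backslash D$ has no sources'' is needed — without it, removing $D$ could create sources and extra KMS states living off $D$ that do lift to $C^*(\Lambda\backslash D)$ but whose lifts to $C^*(\Lambda)$ we must exclude. I expect the cleanest route is the direct subinvariance/counting argument using $\rho(A_{D,i})>1$ and the Cuntz--Krieger relation $q_v=\sum_{\lambda\in v\Lambda^n}t_\lambda t_\lambda^*$ in $C^*(\Lambda)$ to show that $\chi(q_v)$ cannot vanish on a strongly connected component that receives no ``mass'' from elsewhere, i.e.\ that $m^\theta|_D\neq 0$; everything else is a transcription of computations already carried out in Proposition~\ref{extendKMS} and Theorem~\ref{dominant2cpt}.
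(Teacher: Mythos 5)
Your overall architecture is sound, and large parts of it match the paper: the easy direction via the last assertion of Proposition~\ref{extendKMS}; the decomposition $\theta=a\psi+(1-a)\phi\circ q_D$ from Theorem~\ref{dominant2cpt}; and the endgame, in which you evaluate the gap projections $q_v-\sum_{\lambda\in v\Lambda^n}t_\lambda t_\lambda^*$ at $v\in D$, observe that heredity of $D$ makes the $\phi\circ q_D$ summand annihilate them, and conclude from $a>0$ that $\prod_i\big(e^{-r_i}\rho(A_{D,i})\big)^{n_i}=1$ for all $n$, hence $r_i=\ln\rho(A_{D,i})$. That endgame is correct (and is even slightly more economical than the paper's, which shows $a=1$ rather than merely $a>0$).

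The genuine gap is your justification that $a>0$, and both routes you offer fail. For the first: if $m^\theta|_D=0$, then for $v\in D$ every $\lambda\in v\Lambda^n$ has $s(\lambda)\in D$ by heredity, so the Cuntz--Krieger relation at $v$ reads $0=e^{-r\cdot n}\sum_{\lambda\in v\Lambda^n}\theta(q_{s(\lambda)})=0$ and is satisfied trivially; there is no growth contradiction at vertices of $D$, because the growth $\prod_i\rho(A_{D,i})^{n_i}$ of $|v\Lambda^n|$ is exactly absorbed by the factor $e^{-r\cdot n}$ when $r_i\ge\ln\rho(A_{D,i})$. For the second: Proposition~\ref{estnonpref} forces certain vertex projections to have measure \emph{zero}; it cannot be used to force positive measure on $D$. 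The contradiction must come from the complement of $D$, and this is exactly where the hypothesis that $\Lambda\backslash D$ has no sources enters. The paper's route: since $r_i>\ln\rho(A_{\Lambda^0\backslash D,i})$ by \eqref{convAstrong}, every KMS$_1$ state of $(\TC^*(\Lambda\backslash D),\alpha^r)$ is of the form $\phi_\epsilon$ from \cite[Theorem~6.1]{aHLRS2} with gap vector $\epsilon=\prod_i(1-e^{-r_i}A_{\Lambda^0\backslash D,i})m^{\phi_\epsilon}\neq 0$; because $\Lambda\backslash D$ has no sources, \cite[Proposition~4.1(b)]{aHLRS2} then shows $\phi_\epsilon$ does not factor through $C^*(\Lambda\backslash D)$, so $\phi\circ q_D$ does not factor through $C^*(\Lambda)$. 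Since a convex combination of states factors through $C^*(\Lambda)$ only if every summand with nonzero weight does, this forces $a=1$, $\theta=\psi$, and the last assertion of Proposition~\ref{extendKMS} finishes. (A direct repair of your approach is also possible: $\theta=\chi\circ q$ forces the exact eigenvalue equations $A_im^\theta=e^{r_i}m^\theta$; if $m^\theta|_D=0$ the top block gives $A_{\Lambda^0\backslash D,i}\,m^\theta|_{\Lambda^0\backslash D}=e^{r_i}m^\theta|_{\Lambda^0\backslash D}$ with $e^{r_i}>\rho(A_{\Lambda^0\backslash D,i})$, whence $m^\theta=0$, contradicting $\sum_v\theta(q_v)=1$.)
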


\begin{proof} A convex combination of states of $\TC^*(\Lambda)$ factors through $C^*(\Lambda)$ if and only if all the summands do. 

As in the proof of Theorem~\ref{dominant2cpt}, we have  $r_i>\ln\rho(A_{\Lambda^0\backslash D,i})$ for all $i$.
Therefore KMS$_1$ states of $\TC^*(\Lambda\backslash D)$ have the form $\phi_\epsilon$ as described in \cite[Theorem~6.1]{aHLRS2}. Since we can recover $\epsilon$ from $\phi_\epsilon$ as 
\begin{equation}\label{gapvector}
\epsilon=\prod_{i=1}^k(1-e^{-r_i}A_{\Lambda^0\backslash D,i})m^{\phi_\epsilon},
 \end{equation} 
we deduce that the right-hand side of \eqref{gapvector} is nonzero. Since $\Lambda\setminus D$ has no sources, it follows from \cite[Proposition~4.1(b)]{aHLRS2} that $\phi_{\epsilon}$ does not factor through a state of $C^*(\Lambda\backslash D)$. So the only state of $\TC^*(\Lambda)$ which could factor through a state of $C^*(\Lambda)$ is the state $\psi$ in Proposition~\ref{extendKMS}. Thus the corollary follows from the last assertion in Proposition~\ref{extendKMS}.
\end{proof}

\begin{cor}\label{cor5.2-2}
In the situation of Theorem~\ref{dominant2cpt}, the existence of a KMS$_1$ state on $(C^*(\Lambda),\alpha^r)$ implies that $\alpha^r$ is the preferred dynamics.
\end{cor}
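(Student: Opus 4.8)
The plan is to bypass the decomposition of KMS$_1$ states supplied by Theorem~\ref{dominant2cpt} and argue directly. The point is that any KMS$_1$ state of $C^*(\Lambda)$, lifted to $\TC^*(\Lambda)$, yields a nonzero nonnegative vector $m$ with $A_im=e^{r_i}m$ for every $i$; comparing this with the standing hypothesis \eqref{assondyn} then forces $r_i=\ln\rho(A_i)$ for all $i$, which is precisely the assertion that $\alpha^r$ is preferred. Notably this uses only that $\Lambda$ has no sources and that $r$ satisfies \eqref{assondyn}, not the finer structure of Theorem~\ref{dominant2cpt}.

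In detail, I would take a KMS$_1$ state $\bar\theta$ of $(C^*(\Lambda),\alpha^r)$ and pull it back along the quotient map $q\colon\TC^*(\Lambda)\to C^*(\Lambda)$ to obtain a KMS$_1$ state $\theta:=\bar\theta\circ q$ of $(\TC^*(\Lambda),\alpha^r)$ that vanishes on $\ker q$ (recall that $\alpha^r$ on $C^*(\Lambda)$ is induced from $\alpha^r$ on $\TC^*(\Lambda)$). Put $m:=\big(\theta(q_v)\big)_{v\in\Lambda^0}$; since $\Lambda^0$ is finite we have $\sum_v q_v=1$ in $\TC^*(\Lambda)$, so $\|m\|_1=\theta(1)=1$ and in particular $m\neq 0$. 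Because $\Lambda$ has no sources, $C^*(\Lambda)$ imposes the relation $q_v=\sum_{\lambda\in v\Lambda^{e_i}}t_\lambda t_\lambda^*$ for every $v$ and $i$, so each gap projection $q_v-\sum_{\lambda\in v\Lambda^{e_i}}t_\lambda t_\lambda^*$ lies in $\ker q$ and is annihilated by $\theta$ (compare \cite[Lemma~6.2]{AaHR}). The KMS$_1$ condition gives $\theta(t_\lambda t_\lambda^*)=e^{-r\cdot d(\lambda)}\theta(q_{s(\lambda)})=e^{-r\cdot d(\lambda)}m_{s(\lambda)}$, so summing over $\lambda\in v\Lambda^{e_i}$ yields $m_v=e^{-r_i}(A_im)_v$; that is,
\[
A_im=e^{r_i}m\qquad\text{for }1\le i\le k.
\]

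To finish, I note that since $m$ is a nonzero nonnegative vector and $A_i$ is a nonnegative matrix, $e^{r_i}$ is a positive eigenvalue of $A_i$ with a nonnegative eigenvector, whence $e^{r_i}\le\rho(A_i)$; on the other hand \eqref{assondyn} gives $r_i\ge\ln\rho(A_i)$, i.e.\ $e^{r_i}\ge\rho(A_i)$. Hence $r_i=\ln\rho(A_i)$ for all $i$, so $\alpha^r$ is the preferred dynamics. I do not expect a genuine obstacle here: the only things needing care are the (essentially trivial) fact that a state factoring through $C^*(\Lambda)$ kills the gap projections --- which uses that $\Lambda$ has no sources, so that these relations are actually imposed --- and the elementary Perron--Frobenius remark that a nonnegative matrix has no eigenvalue exceeding its spectral radius. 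One could instead deduce the result from Corollary~\ref{cor5.2-1} when $\Lambda\backslash D$ has no sources, but the direct argument needs no such hypothesis and sidesteps Theorem~\ref{dominant2cpt} altogether.
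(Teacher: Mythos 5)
Your proof is correct, but it takes a genuinely different route from the paper's. The paper deduces the corollary from Corollary~\ref{cor5.2-1}: a KMS$_1$ state of $C^*(\Lambda)$ forces $r_i=\ln\rho(A_{D,i})$, and then the chain $r_i\geq\ln\rho(A_i)\geq\ln\rho(A_{D,i})$ collapses to equalities. That argument inherits the full machinery of Theorem~\ref{dominant2cpt} and, strictly speaking, also the extra hypothesis of Corollary~\ref{cor5.2-1} that $\Lambda\backslash D$ has no sources (the paper's proof cites that corollary without comment on this point). Your argument instead goes straight to the invariance relation: a KMS$_1$ state factoring through $C^*(\Lambda)$ kills the gap projections, so $m^\theta$ is a genuine eigenvector of each $A_i$ with eigenvalue $e^{r_i}$ (this is essentially \cite[Proposition~4.1(b)]{aHLRS2}, which the paper uses elsewhere), whence $e^{r_i}\leq\rho(A_i)$; the standing hypothesis \eqref{assondyn} supplies the reverse inequality. (Your appeal to nonnegativity of $m$ and $A_i$ is not even needed here --- any eigenvalue has modulus at most the spectral radius.) What your approach buys is generality and transparency: it uses only that $\Lambda$ has no sources and that $r$ satisfies \eqref{assondyn}, none of the hereditary-component structure, so it exposes the corollary as a general fact about KMS$_1$ states of $C^*(\Lambda)$ rather than a consequence of the dominant-component decomposition; in particular it subsumes both this corollary and the comparison with \cite[Corollary~4.4]{aHLRS2} made in the remark that follows it. What the paper's route buys is economy --- it is a two-line deduction from results already in hand.
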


\begin{proof} If there is a KMS$_1$ state on $(C^*(\Lambda),\alpha^r)$, then Corollary~\ref{cor5.2-1}  implies that $r_i=\ln\rho(A_{D,i})$ for $1\leq i\leq k$.  Since each
\[
r_i\geq \ln\rho(A_i)\geq \ln\rho(A_{D,i}),
\]
we must have $r_i=\ln\rho(A_i)$ for all $i$. Thus $\alpha^r$ is the preferred dynamics.
\end{proof}

Corollary~\ref{cor5.2-2} is substantially stronger than \cite[Corollary~4.4]{aHLRS2}: there the graph is coordinatewise irreducible, and hence has only one critical component, namely $C=\Lambda^0$.

\section{Graphs with several hereditary critical components}\label{partial}

We now consider graphs in which all the critical components are hereditary, but there are more than one of them. Since there can be no paths between distinct hereditary components, none of them dominates in the sense of \S\ref{sec:dom}. The following theorem describes the KMS$_1$ states in this situation.  Recall that from \S\ref{sec:dom}
and beyond we assume that  \[\rho(A_{C, i})>1\text{\ for  $1\leq i\leq k$ and $C\in \Cc$}.\]

\begin{thm}\label{critvshered}
Suppose that $\Lambda$ is a finite $k$-graph without sinks or sources, and that all the critical components $D$ are hereditary with $\Lambda_D$ coordinatewise irreducible. We consider a dynamics $\alpha^r$ given by $r\in (0,\infty)^k$ that satisfies our standing assumption \eqref{assondyn} and has rationally independent coordinates. We write $\Ccrit$ for the set of critical components, and for each $D\in \Ccrit$ we denote by $\psi_D$ the KMS$_1$ state of $(\TC^*(\Lambda),\alpha^r)$ given by applying Proposition~\ref{extendKMS} to the component $D$. We also set
\begin{equation*}
G:=\bigcup\{D:D\in \Ccrit\}.
\end{equation*}
Then every KMS$_1$ state of $(\TC^*(\Lambda),\alpha^r)$ is a convex combination of the states \[\{\psi_D:D\in \Ccrit\}\] and a state $\phi\circ q_{G}$ lifted from a KMS$_1$ state $\phi$ of $(\TC^*(\Lambda\backslash G),\alpha^r)$. 
\end{thm}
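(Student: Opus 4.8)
The plan is to mimic the structure of the proof of Theorem~\ref{dominant2cpt}, but to run the argument simultaneously at all the critical components $D\in\Ccrit$ rather than at a single dominant one. Since the critical components are pairwise hereditary, there are no paths between distinct $D,D'\in\Ccrit$, and in particular $G=\bigsqcup\{D:D\in\Ccrit\}$ is hereditary; so $\Lambda\backslash G$ is a well-defined $k$-graph, and by \cite[Proposition~2.2]{aHKR2} $\TC^*(\Lambda)/I_G\cong\TC^*(\Lambda\backslash G)$. Fix a KMS$_1$ state $\theta$ of $(\TC^*(\Lambda),\alpha^r)$, and let $m^\theta=(\theta(q_v))\in[0,1]^{\Lambda^0}$, which by \cite[Proposition~4.1]{aHLRS2} satisfies $A_im^\theta\le e^{r_i}m^\theta$ for all $i$. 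For each $D\in\Ccrit$ there is (by the definition of criticality) an index $j=j(D)\in K$ with $r_j=\ln\rho(A_{D,j})$, and since $D$ is hereditary the block form of $A_j$ for the decomposition $\Lambda^0=(\Lambda^0\backslash D)\sqcup D$ shows that $m^\theta|_D$ is subinvariant for $A_{D,j}$ with the Perron eigenvalue $\rho(A_{D,j})=\rho(A_j)$. The subinvariance theorem \cite[Theorem~1.6]{Sen} then forces $A_{D,j}(m^\theta|_D)=\rho(A_{D,j})m^\theta|_D$, and since $\{A_{D,i}\}$ has a unique common unimodular Perron--Frobenius eigenvector $x^D$, we get a scalar $a_D\in[0,\infty)$ with $m^\theta|_D=a_Db_D^{-1}x^D$, where $b_D:=\|z^D\|_1$ and $z^D=(y^D,x^D,0)$ is the vector of Proposition~\ref{evectors-new}/\ref{extendKMS} attached to $D$.

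Next I would establish the two numerical facts $a_D\le 1$ for each $D$ and $\sum_{D\in\Ccrit}a_D\le 1$. For a fixed $D$, the bound $a_D\le1$ comes from showing $\theta(q_v)\ge a_D\psi_D(q_v)$ for all $v$: this is exactly the third/fourth-paragraph argument of \cite[Theorem~6.5]{aHKR2} (and of Theorem~\ref{dominant2cpt} here), carried out in the coordinate graph $\Lambda_{j(D)}$, using subinvariance of $m^\theta$ together with the formula \eqref{defpsi} for $\psi_D$; summing $\theta(q_v)\ge a_D\psi_D(q_v)$ over $v$ and using $\sum_v\psi_D(q_v)=1$ gives $a_D\le1$. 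For the joint bound, observe that $\psi_D$ is supported (on vertices) on $\Sigma D:=\{v:v\Lambda D\ne\emptyset\}$, and since distinct critical components are hereditary these support sets $\{\Sigma D:D\in\Ccrit\}$ are pairwise disjoint: if $v\Lambda D\ne\emptyset$ and $v\Lambda D'\ne\emptyset$ with $D\ne D'$ both hereditary, then a path from $v$ into $D$ followed by heredity would put a vertex of $D$ into $D'$, a contradiction. Hence $1=\sum_v\theta(q_v)\ge\sum_{D\in\Ccrit}\sum_{v\in\Sigma D}\theta(q_v)\ge\sum_{D}a_D\sum_{v\in\Sigma D}\psi_D(q_v)=\sum_D a_D$. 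Set $a:=\sum_{D\in\Ccrit}a_D\in[0,1]$.

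If $a=0$ then $m^\theta$ vanishes on $G$, so by \cite[Lemma~6.2]{AaHR} $\theta$ vanishes on $I_G$ and factors as $\phi\circ q_G$ with $\phi$ a KMS$_1$ state of $(\TC^*(\Lambda\backslash G),\alpha^r)$; this is the degenerate case of the claimed convex combination. If $a=1$ then each of the inequalities above is an equality, forcing $\theta(q_v)=\sum_D a_D\psi_D(q_v)$ for all $v$, and then the rational-independence direction of \cite[Proposition~3.1(b)]{aHLRS2} upgrades this equality of vectors to the equality of states $\theta=\sum_D a_D\psi_D$. The substantive case is $0<a<1$. Here I would form $\kappa:=(1-a)^{-1}\big(m^\theta-\sum_{D}a_D\,m^{\psi_D}\big)|_{\Lambda^0\backslash G}$ and $\epsilon:=\prod_{i=1}^k(1-e^{-r_i}E_i)\kappa$ with $E_i:=A_{\Lambda^0\backslash G,i}$; note that since all critical components have been removed, $r_i>\ln\rho(A_{\Lambda^0\backslash G,i})$ for every $i$, so all the matrices $1-e^{-r_i}E_i$ are invertible and \cite[Theorem~6.1]{aHLRS2} applies to $\Lambda\backslash G$ (allowing for sources as in \cite[\S8]{aHKR2}). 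One checks $\|\kappa\|_1=1$ by the same computation as in Theorem~\ref{dominant2cpt}, using $\sum_{v\in D'}m^{\psi_D}_v=0$ for $D'\ne D$ and $\sum_{v\in D}m^{\psi_D}_v=a_Db_D^{-1}\|x^D\|_1$ together with $\sum_v m^{\psi_D}_v=1$; so $\epsilon$ lies in the simplex $\Sigma_1$ of \cite[Theorem~6.1]{aHLRS2} and yields a KMS$_1$ state $\phi_\epsilon$ of $\TC^*(\Lambda\backslash G)$. Finally, following the last two paragraphs of the proof of \cite[Theorem~6.5]{aHKR2} — applied on each of the (disjoint) regions feeding into the various $D$ and on the complement — one verifies the identity $\theta=(1-a)\,\phi_\epsilon\circ q_G+\sum_{D\in\Ccrit}a_D\,\psi_D$ on the spanning elements $t_\mu t_\nu^*$, completing the proof.

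The main obstacle I anticipate is the positivity $\epsilon\ge0$ and the final identification of $\theta$ as that precise convex combination. In \cite[Theorem~6.5]{aHKR2} (and in Theorem~\ref{dominant2cpt}) the single dominant component let one single out one colour $k$ with $\rho(A_k)=\rho(A_{D,k})$ to drive the positivity calculation; with several critical components $D$, the relevant colour $j(D)$ varies with $D$, so the ``swap the colours around'' trick only works locally near each $D$. I expect this to be fine precisely because the sets $\Sigma D$ are pairwise disjoint (and disjoint from the core of $\Lambda\backslash G$ in the relevant sense), so the positivity and the matching of $\theta$ against the convex combination can be checked region by region, one critical component at a time, and then glued; but making this decomposition of the computation clean — especially handling vertices $v$ with $v\Lambda D\ne\emptyset$ for some critical $D$, where the fourth-paragraph calculation of \cite{aHKR2} must be reproduced in $\Lambda_{j(D)}$ — is where the real work lies.
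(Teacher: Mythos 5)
There is a genuine gap at the heart of your argument for the joint bound $\sum_{D\in\Ccrit}a_D\le 1$. You claim that the support sets $\Sigma D=\{v:v\Lambda D\ne\emptyset\}$ are pairwise disjoint for distinct critical components, arguing that ``a path from $v$ into $D$ followed by heredity would put a vertex of $D$ into $D'$.'' But heredity of $D$ constrains paths \emph{leaving} $D$, not paths \emph{entering} it: a vertex $v$ outside $G$ can feed into several hereditary critical components simultaneously, and nothing propagates from $v$ into $D'$ along a path that ends in $D$. The paper's own second concrete example in \S\ref{examples} is a direct counterexample: there $B=\{v\}$ and $D=\{w\}$ are both hereditary and critical, the vertex $u$ has edges into both, and indeed $m^{\psi_B}(q_u)=\tfrac16$ and $m^{\psi_D}(q_u)=\tfrac17$ are both nonzero, so $\Sigma B\cap\Sigma D\ne\emptyset$. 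Consequently your derivation $1=\sum_v\theta(q_v)\ge\sum_D\sum_{v\in\Sigma D}\theta(q_v)$ double-counts nothing only because it silently assumes disjointness; without it, knowing $a_D\le1$ for each $D$ separately does not give $\sum_D a_D\le1$. The same false disjointness is what you lean on to ``expect'' the positivity of $\epsilon$ and the region-by-region matching to go through, so that part of the sketch is also unsupported.

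The missing ingredient is the paper's ``quick exit'' argument: for a vertex $v\notin G$ one considers the paths $v\!\QE_{j_D}(D)$ of colour $j_D$ that leave $F_D$ and land in $D$ at the last step, and one proves that the projections $\{t_\lambda t_\lambda^*:\lambda\in v\!\QE_{j_D}(D),\ D\in\Ccrit\}$ are mutually orthogonal \emph{across different} $D$ -- here heredity is used correctly, to show that $\Lambda^{\min}(\lambda,\mu)=\emptyset$ because a common extension would have its source in two distinct hereditary components. Summing $\theta(t_\lambda t_\lambda^*)$ over this orthogonal family and resumming the geometric series $\sum_n\rho(A_{D,j_D})^{-(n+1)}A_{F_D,j_D}^nA_{F_D,D,j_D}x^D=y^D$ yields $\theta(q_v)\ge\sum_D a_D\psi_D(q_v)$ at every vertex, which is the inequality you actually need. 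For the positivity of $\epsilon$ in the case $\sum_D a_D<1$, the paper does not decompose by regions at all: it shows by a block-matrix computation that $\eta=\prod_i(1_E-e^{-r_i}A_{E,i})\kappa$ coincides with the restriction to $E=\Lambda^0\setminus G$ of the globally subinvariant vector $\prod_i(1_{\Lambda^0}-e^{-r_i}A_i)m^\theta\ge0$, with the contributions of the various $D$ entering additively through the off-diagonal blocks $A_{E,D,j_D}$; this works even though the sets $F_D$ overlap. The remainder of your outline (the scalars $a_D$ from subinvariance, the cases $\sum a_D\in\{0,1\}$, the use of rational independence via \cite[Proposition~3.1(b)]{aHLRS2}, and the norm computation $\|\kappa\|_1=1-\sum_D a_D$) matches the paper.
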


In the proof, we adapt arguments from  the proofs of Theorem~4.3(b) in \cite{aHLRS4} and Theorem~6.5 in \cite{aHKR2}.

\begin{proof}
Suppose that $\theta$ is a KMS$_1$ state of $(\TC^*(\Lambda),\alpha^r)$. By Proposition~4.1 of \cite{aHLRS2}, we have the subinvariance relation
\begin{equation*}
A_i m^\theta\leq e^{r_i}m^\theta \quad \text{for $i\in \{1,\ldots,k\}$.}
\end{equation*}
Let $D\in \Ccrit$. By assumption $D$ is hereditary, and  thus
\begin{equation*}
A_{D,i} m^\theta|_D\leq e^{r_i}m^\theta|_D \quad \text{for  $i\in \{1,\ldots,k\}$.}
\end{equation*}
Since $D$ is critical, there exists $j_D\in \{1,\ldots,k\}$ such that 
\[
r_{j_D}=\ln\rho(A_{D,j_D})=\ln\rho(A_{j_D}),
\] 
and then
\begin{equation*}
A_{D,j_D}m^\theta|_D\leq \rho(A_{D,j_D})m^\theta|_D.
\end{equation*}
Since $\Lambda_D$ is coordinatewise irreducible,  $A_{D,j_D}$ is irreducible. Hence the subinvariance theorem \cite[Theorem~1.6]{Sen} implies that $A_{D,j_D}m^\theta|_D=\rho(A_{D,j_D})m^\theta|_D$, that is,  either $m^\theta|_D$ is a Perron--Frobenius eigenvector for $A_{D,j_D}$ or it is the zero vector. 

Suppose that $C\in \Cc\backslash\{D\}$ such that $C\Lambda D\neq\emptyset$. Then $C$ is not hereditary, and hence is not critical. Thus $\rho(A_{C, j_D})<e^{r_{j_D}}=\rho(A_{D, j_D})$,  which implies that the hypotheses of Propositions~\ref{evectors-new} and \ref{extendKMS} are satisfied. 

Set $H_D:=\{v\in \Lambda^0: v\Lambda D=\emptyset\}$ and 
\[
F_D:=\Lambda^0\backslash (D\sqcup H_D)=\{v\in \Lambda^0:v\Lambda D \neq \emptyset, v\not\in D\}.
\]
   Let $x^D$ be the unimodular Perron--Frobenius eigenvector of $A_{D,j_D}$ and set
 \begin{equation*}
y^D:= \big(\rho(A_{D,j_D})1_{F_D}-A_{F_D,j_D}\big)^{-1}A_{F_D,D,j_D}x^D.
 \end{equation*}  By Proposition~\ref{evectors-new}, relative to the decomposition $\Lambda^0=F_D\sqcup D\sqcup H_D$,   the vector $z^D=(y^D,x^D,0)$ is an eigenvector of  $A_{j_D}$ with eigenvalue $\rho(A_{D,j_D})$.   By Proposition~\ref{extendKMS}, there exists a KMS$_1$ state $\psi_D$ characterised by \eqref{defpsi}. We  define $a_D\geq 0$ by 
 \[
 m^\theta|_D=a_D\|z^D\|_1^{-1}x^D.
 \]

Our goal now is to  show that 
\begin{equation*}
\sum_{D\in\Ccrit}a_D\leq 1.
\end{equation*}
To show this, we will  prove that 
\begin{equation}
\label{sum of coefficients single vertex}
\theta(q_v)\geq \sum_{D\in \Ccrit} a_D \psi_D(q_v) \quad \text{for each $v\in \Lambda^0$};
\end{equation}
this suffices since summing over $v\in\Lambda^0$ gives 
\[
1=\theta(1)=\sum_{v\in \Lambda^0}\theta(q_v)\geq \sum_{v\in \Lambda^0}\sum_{D\in \Ccrit}a_D\psi_D(q_v)=\sum_{D\in \Ccrit}a_D.
\]
First, suppose that $v\in G$. Then $v\in D$ for some $D\in \Ccrit$ and
\[
a_D \psi_D(q_v)=a_D\|z^D\|_1^{-1}z^D_v=a_D\|z^D\|_1^{-1}x^D_v=(m^\theta|_D)_v=\theta(q_v). 
\]
All the critical components are hereditary, so for $D'\in \Ccrit\backslash \{D\}$ we have $v\in H_{D'}=\{w\in \Lambda^0:w\Lambda D'=\emptyset\}$. Thus
\[
a_{D'} \psi_{D'}(q_v)=a_{D'}\|z^{D'}\|_1^{-1}z^{D'}_v=0,
\]
and then
\begin{equation}
\label{states agree on G}
\theta(q_v)= \sum_{D\in \Ccrit} a_D \psi_D(q_v) \quad \text{for $v\in G$}
\end{equation}
which verifies \eqref{sum of coefficients single vertex} for $v\in G$.

Second, suppose that $v\not\in G$.  If $v\Lambda D=\emptyset$ for all $D\in \Ccrit$, then $v\in H_{D}$ for all $D\in \Ccrit$, and so
\[\sum_{D\in \Ccrit} a_D \psi_D(q_v)=\sum_{D\in \Ccrit} a_D\|z^D\|_1^{-1}z^D_v=0\leq \theta(q_v).\] So to verify \eqref{sum of coefficients single vertex}, it remains to consider $v\not\in G$ such that $v\Lambda D\neq \emptyset$ for at least one $D\in \Ccrit$, that is,  $v\in F_D$ for some $D$. To do this we mimic some calculations from \cite[page~2545]{aHLRS4} by looking at the paths which make a ``quick exit'' from a component $D$ in the colour $j_D$. The argument is long and complicated. 

We write $\QE_{j_D}(D)$ for  $\{\lambda\in \Lambda^{\mathbb{N}e_{j_D}}D:s(\lambda(0,d(\lambda)-e_{j_D}))\in F_D\}$, which is the set of paths  of colour $j_D$ which have source in   $D$ and such that ranges of all the edges in the path are outside $D$. We claim that 
\begin{equation}\label{orthogprojs}
\{t_\lambda t_\lambda^*:\lambda\in v\!\QE_{j_D}(D)\text{ for some $D\in \Ccrit$}\}
\end{equation}
consists of mutually orthogonal projections. To see this, fix $C,D\in \Ccrit$ and choose distinct $\lambda\in v\!\QE_{j_C}(C)$ and $\mu\in v\!\QE_{j_D}(D)$. By the Toeplitz--Cuntz--Krieger relation (T3), we have  $t_\lambda t_\lambda^*t_\mu t_\mu^*=\sum_{(\alpha,\beta)\in\Lambda^{\min}(\lambda,\mu)}t_{\lambda\alpha}t_{\mu\beta}^*$, and so it suffices to show that $\Lambda^{\min}(\lambda,\mu)=\emptyset$. 
Since $C$ and $D$ are hereditary, if there exists $(\alpha,\beta)\in \Lambda^{\min}(\lambda,\mu)$, then the common source of $\alpha$ and $\beta$ is in both $C$ and $D$, which means $C=D$. Thus $C\neq D$ implies that $\Lambda^{\min}(\lambda,\mu)=\emptyset$. So suppose that $C=D$. Since $\lambda, \mu \in \Lambda^{\mathbb{N}e_{j_D}}$, $\Lambda^{\min}(\lambda,\mu)$ will be empty unless $\lambda\in \mu\Lambda$ or $\mu\in \lambda\Lambda$. Looking for a contradiction, we assume, without loss of generality, that $\lambda\in \mu\Lambda$, say $\lambda=\mu \eta$. Since $D$ is hereditary and $s(\mu)\in D$, we see that $\eta\in \Lambda_D$. Since $\lambda\neq \mu$, we must have $d(\eta)\geq e_{j_D}$. Thus $s(\lambda(0,d(\lambda)-e_{j_D}))=s(\eta(0,d(\eta)-e_{j_D}))\in D$, which is impossible since $s(\lambda(0,d(\lambda)-e_{j_D}))\in F_D$.  Thus \eqref{orthogprojs} consists of mutually orthogonal projections as claimed,  and $q_v\geq t_\lambda t_\lambda^*$ for $\lambda\in v\Lambda$ gives
\[
\theta(q_v)\geq \sum_{D\in \Ccrit}\ \sum_{\lambda\in v\!\QE_{j_D}(D)}\theta(t_\lambda t_\lambda^*).
\]
Using the KMS condition (see  \cite[Theorem~3.1(a)]{aHLRS2}) we have
\begin{align}\label{lowerbdfortheta}
\theta(q_v)&\geq \sum_{D\in \Ccrit}\ \sum_{\lambda\in v\!\QE_{j_D}(D)}\rho(A_{D,j_D})^{-|\lambda|}\theta(q_{s(\lambda)})\\
&=\sum_{D\in \Ccrit}a_D\|z^D\|_1^{-1}\Big(\sum_{\lambda\in v\!\QE_{j_D}(D)}\rho(A_{D,j_D})^{-|\lambda|}x^D_{s(\lambda)}\Big).\notag
\end{align}
We now examine the inner sum. Since we are only looking at paths that make a quick exit from $D$, we see that
\begin{align}
\sum_{\lambda\in v\!\QE_{j_D}(D)}&\rho(A_{D,j_D})^{-|\lambda|}x^D_{s(\lambda)}\label{useQE} \\
&=\sum_{w\in D}\sum_{n=0}^\infty \rho(A_{D,j_D})^{-(n+1)}\big(A_{F_D,j_D}^nA_{F_D,D,j_D}\big)(v,w)x^D_w.\notag
\end{align}
Since all the critical components are hereditary, none of them lie in $F_D\subset \Lambda^0\backslash G$. Thus $\rho(A_{D,j_D})=e^{r_{j_D}}>\rho(A_{F_D,j_D})$, and we see that 
\begin{align*}
\eqref{useQE}&=\sum_{w\in D}\rho(A_{D,j_D})^{-1}\big(\big(1_{F_D}-\rho(A_{D,j_D})^{-1}A_{F_D,j_D}\big)^{-1}A_{F_D,D,j_D}\big)(v,w)x^D_w\\
&=\sum_{w\in D}\big((\rho(A_{D,j_D})1_{F_D}-A_{F_D,j_D})^{-1}A_{F_D,D,j_D}\big)(v,w)x^D_w\\
&=\big((\rho(A_{D,j_D})1_{F_D}-A_{F_D,j_D})^{-1}A_{F_D,D,j_D}x^D\big)_v.\\
&=y^D_v.
\end{align*}
 Since  \eqref{useQE} was the inner sum of the right-hand side of \eqref{lowerbdfortheta} and since $v\in F_D$, we get
\[
\theta(q_v)\geq\sum_{D\in\Ccrit}a_D\|z^D\|_1^{-1}y^D_v=\sum_{D\in\Ccrit}a_D\psi_D(q_v).
\]
We have now established \eqref{sum of coefficients single vertex} for all $v\in\Lambda^0$.  As mentioned above, it  follows that 
\[
\sum_{D\in\Ccrit}a_D\leq 1.
\]

Suppose that $\sum_{D\in\Ccrit}a_D=1$.  Then
\[
\begin{aligned}
\sum_{v\in \Lambda^0} \left(\theta(q_v)-\sum_{D\in \Ccrit} a_D \psi_D(q_v)\right)
&=\sum_{v\in \Lambda^0}\theta(q_v)-\sum_{D\in \Ccrit}a_D\left(\sum_{v\in \Lambda^0}\psi_D(q_v)\right)\\
&=1-\sum_{D\in \Ccrit}a_D=0.
\end{aligned}
\]
It then follows from \eqref{sum of coefficients single vertex} that $\theta(q_v)= \sum_{D\in \Ccrit} a_D \psi_D(q_v)$ for $v\in \Lambda^0$.  Since both    $\theta$ and $\sum_{D\in \Ccrit} a_D\psi_D$ are KMS$_1$ states and the coordinates of $r$ are rationally independent, it follows from \cite[Proposition~3.1(b)]{aHLRS2} that they agree on all the spanning elements  $t_\mu t_\nu^*$ of $\TC^*(\Lambda)$. Hence by linearity and continuity, we have
\[
\theta=\sum_{D\in \Ccrit} a_D \psi_D
\]
and $\theta$ is a convex combination, as required.

The other possibility is that $\sum_{D\in \Ccrit}a_D<1$. To handle this case, we adapt the argument of the last four paragraphs in the proof of \cite[Theorem~6.5]{aHKR2}. For convenience we write $E:=\Lambda^0\backslash G$.
Let 
\[
\kappa:=\Big(m^\theta-\sum_{D\in \Ccrit} a_Dm^{\psi_D}\Big)\Big|_E. 
\]
By \eqref{states agree on G},  $\theta$ and $\sum_{D\in \Ccrit} a_D\psi_D$ agree on the vertex projections $\{q_v: v\in G\}$, and a short calculation using this  shows that $\|\kappa\|_1=1-\sum_{D\in\Ccrit}a_D$. 
We claim  that  the vector
\begin{equation*}
\eta:=\prod_{i=1}^k\big(1_E-e^{-r_i}A_{E,i}\big)\kappa
\end{equation*}
belongs to $[0,\infty)^E$.  Once we have stablished that $\eta\geq 0$, we will argue that the vector $\epsilon:=\|\kappa\|_1^{-1}\eta$ gives a KMS$_1$ state $\phi_\epsilon$ of the quotient $\TC^*(\Lambda\setminus G)$ and that $\theta$ is a convex combination of $\phi_\epsilon\circ q_G$ and $\{\psi_D:D\in \Ccrit\}$.

Since $\theta$ is a KMS$_1$ state of $(\TC^*(\Lambda),\alpha^r)$, Proposition~4.1(a) of \cite{aHLRS2} implies that the vector $m^\theta=\big(\,\theta(q_v)\,\big)_{v\in \Lambda^0}$ in $[0,\infty)^{\Lambda^0}$ satisfies the subinvariance relation
\begin{equation}
\label{full sub inv product}
\prod_{i=1}^k(1_{\Lambda^0}-e^{-r_i}A_i)m^\theta\geq 0;
\end{equation}
we will show that the restriction of this vector to $E$ is    $\eta$. 

Consider the block decomposition of \eqref{full sub inv product} relative to $\Lambda^0=E\sqcup G$. 
An induction argument on the number of critical components shows that the top entry of \eqref{full sub inv product}  is
\begin{align}\label{topofsubinv}
\prod_{i=1}^k(1_E-e^{-r_i}A_{E,i})m^\theta|_E-\sum_{D\in \Ccrit}\prod_{\substack{i=1\\i\not=j_D}}^k(1_E-e^{-r_i}A_{E,i})e^{-r_{j_D}}A_{E,D,{j_D}}m^\theta|_D.
\end{align}
Now we restrict attention to  the $D$th summand of the second term: 
\begin{align*}
\prod_{\substack{i=1\\i\not=j_D}}^k&(1_E-e^{-r_i}A_{E,i})e^{-r_{j_D}}A_{E,D,{j_D}}m^\theta|_D\\
&=a_D\|z^D\|_1^{-1}\prod_{\substack{i=1\\i\not=j_D}}^k(1_E-e^{-r_i}A_{E,i})\rho(A_{D,j_D})^{-1}A_{E,D,{j_D}}x^D.
\end{align*}
Since there can be no paths from $D$ to $H_D$, nor from $F_D$ to $H_D$, relative to the decomposition $E=F_D\sqcup(H_D\backslash G)$,  this is given by
\begin{align}
a_D\|z^D\|_1^{-1}&\prod_{\substack{i=1\\i\not=j_D}}^k
\begin{pmatrix} 1_{F_D}-e^{-r_i}A_{F_D,i} & -e^{-r_i}A_{F_D, H_D\backslash G,i}\\ 0 & 1_{H_D\backslash G}-e^{-r_i}A_{H_D\backslash G,i}\end{pmatrix}
\rho(A_{D,j_D})^{-1}
\begin{pmatrix} A_{F_D,D,j_D} \\ 0 \end{pmatrix}
x^D\notag\\
&=
\begin{pmatrix} a_D\|z^D\|_1^{-1}\prod_{\substack{i=1\\i\not=j_D}}^k (1_{F_D}-e^{-r_i}A_{F_D,i})\rho(A_{D,j_D})^{-1}A_{F_D,D,j_D}x^D \\ 0 \end{pmatrix}.\label{second term decomposed}
\end{align}
Since  every critical component is hereditary, $F_D$ does not contain any critical components. Thus it follows from \eqref{assondyn} that $\rho(A_{D,j_D})=e^{r_{j_D}}>\rho(A_{F_D,j_D})$, and so the matrix $1_{F_D}-\rho(A_{D,j_D})^{-1}A_{F_D,j_D}$ is invertible. We can use this inverse to rewrite the top block of \eqref{second term decomposed} as
\begin{align*}
a_D\|z^D\|_1^{-1}&\prod_{i=1}^k(1_{F_D}-e^{-r_i}A_{F_D,i})\big(1_{F_D}-\rho(A_{D,j_D})^{-1}A_{F_D,j_D})^{-1}\rho(A_{D,j_D}\big)^{-1}A_{F_D,D,j_D}x^D\\
&=a_D\|z^D\|_1^{-1}\prod_{i=1}^k(1_{F_D}-e^{-r_i}A_{F_D,i})(\rho(A_{D,j_D})1_{F_D} -A_{F_D,j_D})^{-1}A_{F_D,D,j_D}x^D\\
&=a_D\|z^D\|_1^{-1}\prod_{i=1}^k(1_{F_D}-e^{-r_i}A_{F_D,i})y^D\\
&=\prod_{i=1}^k(1_{F_D}-e^{-r_i}A_{F_D,i})(a_Dm^{\psi_D}|_{F_D})
\end{align*}
because $m^{\psi_D}_v=\psi_D(q_v)=\|z\|_1^{-1}z^D_v$ for $v\in\Lambda^0$ and $z^D=(x^D, y^D, 0)$. 
Since $m^{\psi_D}|_{H_D\backslash G}=0$ and $E=F_D\sqcup (H_D\backslash G)$, we see that \eqref{second term decomposed} is given by
\begin{equation*}
\begin{aligned}
&\begin{pmatrix} \prod_{i=1}^k(1_{F_D}-e^{-r_i}A_{F_D,i})(a_Dm^{\psi_D}|_{F_D}) \\ 0 \end{pmatrix}\\
&\hspace{5em}=\prod_{i=1}^k
\begin{pmatrix} 1_{F_D}-e^{-r_i}A_{F_D,i} & -e^{-r_i}A_{F_D, H_D\backslash G,i}\\ 0 & 1_{H_D\backslash G}-e^{-r_i}A_{H_D\backslash G,i}\end{pmatrix}
\begin{pmatrix} a_Dm^{\psi_D}|_{F_D} \\ 0 \end{pmatrix}\\
&\hspace{5em}=\prod_{i=1}^k(1_E-e^{-r_i}A_{E,i})a_Dm^{\psi_D}|_E.
\end{aligned}
\end{equation*}
Summing over $D\in\Ccrit$ gives us back the second term in \eqref{topofsubinv}:
\[ \sum_{D\in \Ccrit}\prod_{\substack{i=1\\i\not=j_D}}^k(1_E-e^{-r_i}A_{E,i})e^{-r_{j_D}}A_{E,D,{j_D}}m^\theta|_D=\sum_{D\in\Ccrit}\prod_{i=1}^k(1_E-e^{-r_i}A_{E,i})a_Dm^{\psi_D}|_E.\]
Now, starting with the definition of $\eta$, we trace our way back to  \eqref{topofsubinv}:
\begin{align*}
\eta&= \prod_{i=1}^k(1_E-e^{-r_i}A_{E,i})\Big(m^\theta-\sum_{D\in \Ccrit}a_Dm^{\psi_D}\Big)\Big|_E
\\
&=\prod_{i=1}^k(1_E-e^{-r_i}A_{E,i}) m^\theta|_E-\prod_{i=1}^k(1_E-e^{-r_i}A_{E,i})\sum_{D\in \Ccrit}a_Dm^{\psi_D}|_E\\
&=\prod_{i=1}^k(1_E-e^{-r_i}A_{E,i}) m^\theta|_E-\sum_{D\in \Ccrit}\prod_{i=1}^k(1_E-e^{-r_i}A_{E,i})a_Dm^{\psi_D}|_E.
\end{align*}
Thus $\eta$ is the top entry of \eqref{full sub inv product}, and hence $\eta\geq 0$.

We now set 
\[
\epsilon:=\|\kappa\|_1^{-1}\eta=\Big(1-\sum_{D\in \Ccrit}a_D\Big)^{-1}\eta.
\]
Since $E$ does not contain any critical components,  by \eqref{assondyn} we have $r_i\geq\ln\rho(A_i)>\ln\rho(A_{E,i})$ for  $i\in \{1,\ldots, k\}$, and the inverse temperature $\beta=1$ is in the range for which \cite[Theorem~6.1]{aHLRS2} applies\footnote{The graph $\Lambda_E$ could have sources (see \cite[Example~8.4]{aHKR2}). So this application of the result from \cite[Theorem~6.1]{aHLRS2} depends on the observation at the start of \cite[\S8]{aHKR2} that \cite[Theorem~6.1]{aHLRS2} applies also to graphs with sources.}.
Also  $e^{r_i}>\rho(A_{E,i})$ for  $i\in \{1,\ldots, k\}$, and so all the matrices $1_E-e^{-r_i}A_{E,i}$ are invertible and we can recover $\kappa$ from $\eta$. Thus
\begin{equation*}
\begin{aligned}
\Big\|\prod_{i=1}^k (1_E-e^{-r_i}A_{E,i})^{-1}\epsilon\Big\|_1&=\Big(1-\sum_{D\in \Ccrit}a_D\Big)^{-1}\Big\|\prod_{i=1}^k (1_E-e^{-r_i}A_{E,i})^{-1}\eta\Big\|_1\\
&=\Big(1-\sum_{D\in \Ccrit}a_D\Big)^{-1}\|\kappa\|_1\\
&=1,
\end{aligned}
\end{equation*}
and it follows from \cite[Theorem~6.1(a)]{aHLRS2} that $\epsilon$ belongs to the simplex $\Sigma_1$ of that theorem for the graph $\Lambda_E=\Lambda\backslash G$. We deduce that there is a KMS$_1$ state $\phi_\epsilon$ of $(\TC^*(\Lambda\backslash G),\alpha^r)$ such that \[\phi_\epsilon(q_v)=\Big(1-\sum_{D\in \Ccrit}a_D\Big)^{-1}\kappa_v\] for all $v\in E$. Now
\[
\Big(1-\sum_{D\in \Ccrit}a_D\Big)(\phi_\epsilon\circ q_{G})+\sum_{D\in \Ccrit}a_D\psi_D
\]
is a KMS$_1$ state of $(\TC^*(\Lambda),\alpha^r)$ which agrees with $\theta$ on  the vertex projections $q_v$. Since they are both KMS$_1$ states and the coordinates of $r$ are rationally independent, it again follows from \cite[Proposition~3.1(b)]{aHLRS2} that 
\[
\theta=\Big(1-\sum_{D\in \Ccrit}a_D\Big)(\phi_\epsilon\circ q_G)+\sum_{D\in \Ccrit}a_D\psi_D,
\]
and $\theta$ is a convex combination, as required.
\end{proof}


\begin{cor}
In the situation of Theorem~\ref{critvshered}, the KMS$_1$ simplex of $(\TC^*(\Lambda),\alpha^r)$ has dimension $|\Lambda^0\backslash G|+|\Ccrit|-1$.
\end{cor}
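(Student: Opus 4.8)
The plan is to read off the dimension directly from the structure theorem just proved. First I would abbreviate $E:=\Lambda^0\backslash G$ and note that, since every critical component is contained in $G$, the subgraph $\Lambda\backslash G=\Lambda_E$ contains no critical component, so $r_i\geq\ln\rho(A_i)>\ln\rho(A_{E,i})$ for all $i$; hence $\beta=1$ is strictly above the critical value for $\Lambda\backslash G$, and \cite[Theorem~6.1]{aHLRS2} tells us that the KMS$_1$ states of $(\TC^*(\Lambda\backslash G),\alpha^r)$ form a simplex with exactly $|E|$ extreme points, say $\{\phi_w:w\in E\}$. Because $q_G$ is surjective, $\phi\mapsto\phi\circ q_G$ is an injective affine map, so Theorem~\ref{critvshered} shows that the KMS$_1$ simplex $S$ of $(\TC^*(\Lambda),\alpha^r)$ is the convex hull of the $|\Ccrit|+|E|$ states $\{\psi_D:D\in\Ccrit\}\cup\{\phi_w\circ q_G:w\in E\}$. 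This already gives $\dim S\leq|\Ccrit|+|E|-1$, and what remains is to prove that these states are affinely independent.

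To get affine independence I would pass to the vertices: since the coordinates of $r$ are rationally independent, \cite[Proposition~3.1(b)]{aHLRS2} makes $\theta\mapsto m^\theta:=(\theta(q_v))_{v\in\Lambda^0}$ injective on KMS$_1$ states, so it suffices to show the vectors $m^{\psi_D}$ ($D\in\Ccrit$) and $m^{\phi_w\circ q_G}$ ($w\in E$) are affinely independent in $\RR^{\Lambda^0}$. The main device is a support computation. For $v$ in a critical component $D_0$: every $\phi_w\circ q_G$ kills $q_v$ because $v\in G$ and $q_G(q_v)=0$; for $D\in\Ccrit\backslash\{D_0\}$ one has $v\in H_D$ (since $D_0$ is hereditary, $v$ cannot reach the distinct component $D$), so $m^{\psi_D}_v=\|z^D\|_1^{-1}z^D_v=0$; whereas $m^{\psi_{D_0}}_v=\|z^{D_0}\|_1^{-1}x^{D_0}_v>0$ since $x^{D_0}$ is a strictly positive Perron--Frobenius eigenvector. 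Evaluating any affine relation $\sum_Dc_Dm^{\psi_D}+\sum_wd_wm^{\phi_w\circ q_G}=0$ (with $\sum c_D+\sum d_w=0$) at such a $v$ therefore forces each $c_{D_0}=0$. The relation then collapses to $\sum_wd_wm^{\phi_w}=0$ on $E$ with $\sum_wd_w=0$, which is impossible unless all $d_w=0$, because the $\phi_w$ are the (by construction affinely independent) extreme points of the KMS$_1$ simplex of $\TC^*(\Lambda\backslash G)$ and $\phi\mapsto m^\phi$ is again injective there by \cite[Proposition~3.1(b)]{aHLRS2}.

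Putting this together, $S$ is the convex hull of $|\Ccrit|+|E|$ affinely independent states, hence a simplex of dimension $|\Ccrit|+|E|-1=|\Lambda^0\backslash G|+|\Ccrit|-1$. I do not anticipate a real obstacle: the only thing requiring care is the bookkeeping of which generating state is nonzero on which vertex projection, which is inherited from Propositions~\ref{evectors-new} and \ref{extendKMS} and from the fact that distinct critical components are hereditary and hence mutually unreachable. The degenerate case $E=\emptyset$ causes no trouble either---the $\phi_w$-part of the argument is then vacuous and the formula still reads correctly.
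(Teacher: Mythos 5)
Your argument is correct and follows the same route as the paper: observe that $\Lambda\backslash G$ has no critical components, so $r_i>\ln\rho(A_{\Lambda^0\backslash G,i})$ and \cite[Theorem~6.1]{aHLRS2} gives a KMS$_1$ simplex of dimension $|\Lambda^0\backslash G|-1$ for the quotient, and then combine with Theorem~\ref{critvshered}. The paper leaves the affine independence of the generating states implicit, whereas you verify it explicitly via the support computation on the vertex projections; that is a correct and worthwhile piece of bookkeeping, not a deviation in method.
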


\begin{proof}
Since all the critical components are hereditary and belong to $\Ccrit$, the dynamics on $\TC^*(\Lambda\backslash G)$ induced by $\alpha^r$ has $r_i>\ln\rho(A_{\Lambda^0\backslash G,i})$ for all $i$, and hence by \cite[Theorem~6.1]{aHLRS2} has a KMS$_1$ simplex of dimension $|\Lambda^0\backslash G|-1$. Thus the result follows from Theorem~\ref{critvshered}.
\end{proof}

\begin{cor}
Suppose that we have a $k$-graph $\Lambda$ with the properties in Theorem~\ref{critvshered}, and suppose  in addition that for $D\in\Ccrit$ the graph $\Lambda\backslash D$ does not have sources. Then a KMS$_1$ state of $\TC^*(\Lambda)$ factors through $C^*(\Lambda)$ if and only if it is a convex combination of the states \[\{\psi_D:D\in \Ccrit, r_i=\ln\rho(A_{D,i}) \text{ for }1\leq i\leq k\}.\]
\end{cor}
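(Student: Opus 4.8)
The plan is to reduce the statement to Theorem~\ref{critvshered} and the last sentence of Proposition~\ref{extendKMS}, using the elementary observation (already exploited in the proof of Corollary~\ref{cor5.2-1}) that a convex combination $\sum_i c_i\phi_i$ of states of $\TC^*(\Lambda)$ with every $c_i>0$ factors through $C^*(\Lambda)$ if and only if each $\phi_i$ does: the kernel of the quotient map $q\colon\TC^*(\Lambda)\to C^*(\Lambda)$ is generated by the positive elements $q_v-\sum_{\lambda\in v\Lambda^n}t_\lambda t_\lambda^*$, a state annihilates the ideal generated by a set of positive elements exactly when it annihilates those elements, and $\sum_i c_i\phi_i$ kills a positive element precisely when each $\phi_i$ does. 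Granting this, the ``if'' direction is immediate: if $\theta=\sum_D a_D\psi_D$ with each $D\in\Ccrit$ satisfying $r_i=\ln\rho(A_{D,i})$ for $1\le i\le k$, then by the last sentence of Proposition~\ref{extendKMS} each $\psi_D$ factors through a state of $C^*(\Lambda)$, and hence so does $\theta$.

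For the ``only if'' direction I would start with a KMS$_1$ state $\theta$ of $(\TC^*(\Lambda),\alpha^r)$ that factors through $C^*(\Lambda)$ and apply Theorem~\ref{critvshered} to write $\theta=\sum_{D\in\Ccrit}a_D\psi_D+b\,(\phi\circ q_G)$ with $a_D\ge 0$, $b=1-\sum_D a_D\ge 0$, and $\phi$ a KMS$_1$ state of $(\TC^*(\Lambda\backslash G),\alpha^r)$. By the observation above, every summand occurring with a strictly positive coefficient factors through $C^*(\Lambda)$; in particular, whenever $a_D>0$ the state $\psi_D$ factors through $C^*(\Lambda)$, so Proposition~\ref{extendKMS} forces $r_i=\ln\rho(A_{D,i})$ for all $i$. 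Thus each $D$ that actually appears lies in the index set of the statement, and the proof will be complete once we show that $b=0$.

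Suppose for contradiction that $b>0$, so $\phi\circ q_G$ factors through $C^*(\Lambda)$. Since $G$ is hereditary, $q_G$ kills every $t_\lambda$ with $s(\lambda)\in G$, so the relation $q_v-\sum_{\lambda\in v\Lambda^n}t_\lambda t_\lambda^*$ is sent to $0$ when $v\in G$ and to $q_v-\sum_{\lambda\in v(\Lambda\backslash G)^n}t_\lambda t_\lambda^*$ when $v\notin G$; hence $\phi\circ q_G$ factoring through $C^*(\Lambda)$ is equivalent to $\phi$ factoring through $C^*(\Lambda\backslash G)$. Using $\phi(t_\lambda t_\lambda^*)=e^{-r\cdot d(\lambda)}\phi(q_{s(\lambda)})$ from the KMS condition, the latter forces $A_{\Lambda^0\backslash G,i}\,m^\phi=e^{r_i}m^\phi$ for $1\le i\le k$. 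But, exactly as in the proof of Theorem~\ref{critvshered}, $\Lambda\backslash G$ contains no critical components, so $e^{r_i}>\rho(A_{\Lambda^0\backslash G,i})$ for every $i$; since a nonnegative matrix has no nonzero nonnegative eigenvector for an eigenvalue strictly larger than its spectral radius, and $m^\phi\ne 0$ because $\phi$ is a state, this is impossible. Hence $b=0$ and $\theta=\sum_D a_D\psi_D$, which is the asserted convex combination. (The no-sources hypothesis on the graphs $\Lambda\backslash D$ is the standing assumption of the section ensuring the quotient graphs stay well-behaved, cf.\ the footnotes in the proofs above; alternatively one may route the last step through \cite[Theorem~6.1]{aHLRS2} and \cite[Proposition~4.1(b)]{aHLRS2}, as in Corollary~\ref{cor5.2-1}.)

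I expect the only delicate step to be this last paragraph: one must transcribe ``$\phi\circ q_G$ factors through $C^*(\Lambda)$'' correctly into a statement about $\phi$ and $\TC^*(\Lambda\backslash G)$ via the hereditariness of $G$, and then exploit that deleting every critical component leaves all relevant spectral radii strictly below $e^{r_i}$, which is incompatible with a KMS$_1$ state of the Toeplitz algebra factoring through the quotient graph algebra. Everything else is a bookkeeping assembly of Theorem~\ref{critvshered} and Proposition~\ref{extendKMS}.
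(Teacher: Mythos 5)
Your argument is correct and follows essentially the same route as the paper: the factoring criterion for each $\psi_D$ from the last assertion of Proposition~\ref{extendKMS} (via Corollary~\ref{cor5.2-1}), the observation that a convex combination of states factors through $C^*(\Lambda)$ exactly when every summand with positive weight does, and the impossibility of the lifted state $\phi\circ q_G$ factoring through $C^*(\Lambda\backslash G)$. The only cosmetic difference is that you rule out that last summand by a direct eigenvector/spectral-radius argument, whereas the paper recovers the nonzero vector $\epsilon$ from $\phi_\epsilon$ and cites \cite[Proposition~4.1(b)]{aHLRS2}, exactly as in the proof of Corollary~\ref{cor5.2-1}; these amount to the same thing.
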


\begin{proof}  By Corollary~\ref{cor5.2-1}, each $\psi_D$ factors through $C^*(\Lambda)$ if and only if $r_i=\ln\rho(A_{D,i})$ for $1\leq i\leq k$.  No state $\phi_\epsilon$ of $\TC^*(\Lambda\backslash G)$  factors through $C^*(\Lambda\backslash G)$ (this follows as in the proof of Corollary~\ref{cor5.2-1}). Since a convex combination of states of $\TC^*(\Lambda)$ factors through $C^*(\Lambda)$ if and only all the summands do, the result follows. 
\end{proof}

\section{Computing the KMS$_1$ states}\label{procedure}

We now describe how to combine our results to find the extreme KMS states at the critical inverse temperature. We begin by making some general assumptions about the graphs we consider.

Throughout, we consider a finite $k$-graph $\Lambda$ with no sources and no sinks. First we assume that the graph is suitably connected:
\begin{itemize}
 \item[(A1)]
We assume that there are no trivial strongly connected components, which forces $\Lambda^0=\bigcup\{C:C\in \Cc\}$. We assume that there are no isolated subgraphs: there is no decomposition $\Lambda=\Lambda_K\sqcup \Lambda_H$ with $H\cap K=\emptyset$.
\end{itemize}
We assume that the full results of \cite{aHLRS2} apply to the reductions $\Lambda_C$:
\begin{itemize}
\item[(A2)] For all $C\in \Cc$, the graph $\Lambda_C$ is coordinatewise irreducible, and $\rho(A_{C,i})>1$ for all $i$.
\end{itemize}
Our next assumption rules out the case in which the only bridge between components  consists of edges of a single colour, as in the example in \cite[Remark~6.2]{aHKR2}.
\begin{itemize}
\item[(A3)] If $C,D\in \Cc$ and $C\Lambda^{e_j} D\not=\emptyset$ for some $j$, then $C\Lambda^{e_i}D\not=\emptyset$ for all $1\leq i\leq k$.
\end{itemize}
Since we know by (A2) that the graphs $\{\Lambda_C:C\in \Cc\}$ are coordinatewise irreducible, (A1) and (A3) imply that $\Lambda$ satisfies the hypothesis \eqref{allconnected} in Proposition~\ref{reducetocrither}. The three assumptions (A1--A3) imply that for every hereditary subset $H$ of $\Lambda^0$, the graph $\Lambda\backslash H$ obtained by removing $H$ has no sources or sinks. The examples in \cite[\S8]{aHKR2} show that otherwise sources of various types could be created.

We now consider a dynamics $\alpha^r:\RR\to \Aut \TC^*(\Lambda)$ determined by a vector $r\in (0,\infty)^k$ satisfying the standing hypothesis \eqref{assondyn} and having rationally independent coordinates $\{r_i:1\leq i\leq k\}$.
Then the following procedure will generate the extreme points of the simplex of KMS$_1$ states of the system $(\TC^*(\Lambda),\alpha^r)$. We are thinking of the components as small, even singletons, in which case $\Lambda$ is one of our favourite  dumbbell graphs.
\smallskip

\begin{itemize}
\item[(C1)] First we calculate the spectral radii of the matrices $A_{C,i}$, and identify the critical components of $\Lambda$. If any of them are not hereditary, we identify the set $H$ of Proposition~\ref{reducetocrither}. (Recall that $H$ is the the complement of the union of $\Cmcrit$ in its hereditary closure.) Then we study the system $(\TC^*(\Lambda\backslash H), \alpha^r)$. 
\smallskip

\item[(C2)] All of the critical components of $\Lambda\backslash H$  are now hereditary. For each such component $D$, we compute the common unimodular Perron--Frobenius eigenvector $x$ of the matrices $A_{D,i}$. (Theoretically, it suffices to find the eigenvector of one of them, but there is an opportunity for a reality check here.) Proposition~\ref{evectors-new} tells us how to extend $x$ to an eigenvector $z$ of $A_{\Lambda^0\backslash H, i}$, and then Proposition~\ref{extendKMS} gives us an explicit KMS$_1$ state $\psi_D$ of $(\TC^*(\Lambda\backslash H),\alpha^r)$.
\smallskip

\item[(C3)] Now we take $G$ to be the union of the critical components of $\Lambda\backslash H$. Then Theorem~\ref{critvshered} tells us that the states
\[\{\psi_D: D\in \Ccrit(\Lambda\backslash H)\}=\{\psi_D: D\in \Cmcrit(\Lambda)\}
\]
are extreme points of the KMS$_1$ simplex of $(\TC^*(\Lambda\backslash H),\alpha^r)$, and that the other KMS$_1$ states factor through the quotient map $q_G:\TC^*(\Lambda\backslash H)\to \TC^*(\Lambda\backslash (H\cup G))$. Since we have removed all the critical components from $\Lambda$, we have \[
r_i>\ln\rho(A_{\Lambda^0\backslash (H\cup G),i})\quad \text{for $1\leq i\leq k$},
\]
and Theorem~6.1 of \cite{aHLRS2} describes an explicit parametrisation $\epsilon\mapsto \phi_\epsilon$ of the KMS$_1$ simplex of $(\TC^*(\Lambda\backslash (H\cup G)),\alpha^r)$. The extreme points are those where $\epsilon$ is a multiple of a point mass $\delta_v$; in the notation of \cite[Theorem~6.1]{aHLRS2}, the multiple is $y_v^{-1}\delta_v$. (This vector $y$ is not the one of Proposition~\ref{evectors-new}.) Thus the extreme points of the simplex of KMS$_1$ states of $(\TC^*(\Lambda),\alpha^r)$ are
\begin{equation*}
\big\{\psi_D:\text{$D$ is critical for $\Lambda\backslash H$}\big\}\cup
\big\{\phi_{y_v^{-1}\delta_v}\circ q_G:v\in \Lambda^0\backslash (H\cup G)\big\}.
\end{equation*}
\end{itemize}

\section{Finding all the KMS states}\label{program}

We now describe our program for finding all the KMS states of $(\TC^*(\Lambda),\alpha^r)$. We suppose that $\Lambda$ is a finite $k$-graph with no sources and no sinks, satisfying the assumptions (A1--A3) of the previous section. We suppose that $r\in(0,\infty)^k$ has rationally independent coordinates, and has been normalised (multiplied by a suitable scalar) to ensure that the standing hypothesis \eqref{assondyn} is satisfied. Our program has three steps, which can be iterated to reduce our problem to the same problem for graphs with fewer strongly connected components. When we get down to a strongly connected graph (one with a single component), \cite[Proposition~4.2]{aHKR} implies that there is a unique KMS$_1$ state. The program consists of the following 3 items.
\begin{itemize}
\item[(P1)] For $\beta>1$, we apply \cite[Theorem~6.1]{aHLRS2}. This yields a KMS$_\beta$ simplex with $|\Lambda^0|$ extreme points  $\big\{\phi_{y_v^{-1}\delta_v}:v\in \Lambda^0\big\}$. 
\smallskip

\item[(P2)] At $\beta=1$, we apply the procedure of \S\ref{procedure}. Let $H$ be the hereditary set  described in Proposition~\ref{reducetocrither} and let $G$ be the union of the critical components of $\Lambda\backslash H$. Then we follow the steps (C1--C3), arriving at a KMS$_1$ simplex with extreme points parametrised by the critical components in $\Lambda\backslash H$ and the vertices $v$ in $\Lambda^0\backslash(G\cup H)$.
\smallskip

\item[(P3)] For $\beta<1$, we start with a lemma.  

\begin{lem}\label{lemP3} Every KMS$_\beta$ state of $(\TC^*(\Lambda),\alpha^r)$ factors through a KMS$_\beta$ state of $(\TC^*(\Lambda\backslash(H\cup G)),\alpha^r)$. 
\end{lem}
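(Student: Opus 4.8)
The plan is to prove the stronger statement that, for $\beta<1$, every KMS$_\beta$ state $\psi$ of $(\TC^*(\Lambda),\alpha^r)$ satisfies $\psi(q_w)=0$ for all $w\in H\cup G$, and then to obtain the factorisation by the ideal-theoretic argument already used in Corollary~\ref{Ccrit} and Proposition~\ref{reducetocrither}. The first point is to identify $H\cup G$: by Proposition~\ref{reducetocrither}, $G=\bigcup\{C:C\in\Cmcrit\}$ (these being exactly the critical components of $\Lambda\backslash H$), and $H=H'\backslash G$ where $H'$ is the hereditary closure of $G$; hence $H\cup G=H'$ is hereditary in $\Lambda$. Consequently \cite[Proposition~2.2]{aHKR2} gives a canonical isomorphism $\TC^*(\Lambda)/I_{H\cup G}\cong\TC^*(\Lambda\backslash(H\cup G))$ under which the quotient map $q_{H\cup G}$ is $\alpha^r$-equivariant. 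So once we know $\psi$ vanishes on $\{q_w:w\in H\cup G\}$, \cite[Lemma~6.2]{AaHR} shows that $\psi$ vanishes on the ideal $I_{H\cup G}$, hence factors as $\psi=\bar\psi\circ q_{H\cup G}$; since $q_{H\cup G}$ is an equivariant surjection, $\bar\psi$ is automatically a KMS$_\beta$ state of $(\TC^*(\Lambda\backslash(H\cup G)),\alpha^r)$.

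For the vanishing I would use that, by \cite[Proposition~4.1(a)]{aHLRS2}, the vector $m^\psi=\big(\psi(q_v)\big)\in[0,1]^{\Lambda^0}$ satisfies $A_im^\psi\leq e^{\beta r_i}m^\psi$ for $1\leq i\leq k$, and hence $A_i^nm^\psi\leq e^{n\beta r_i}m^\psi$ for all $n\geq1$ since $A_i$ has nonnegative entries. Fix a critical component $C$, say $j$-critical, so that $A_{C,j}$ is irreducible and $e^{r_j}=\rho(A_{C,j})$. Restricting the $j$th subinvariance relation to $C$ and discarding the nonnegative contributions from outside the $C\times C$ block gives $A_{C,j}(m^\psi|_C)\leq e^{\beta r_j}(m^\psi|_C)$, and because $r_j>0$ and $\beta<1$ we have $e^{\beta r_j}<e^{r_j}=\rho(A_{C,j})$. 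The subinvariance theorem \cite[Theorem~1.6]{Sen} then forces $m^\psi|_C=0$. This is the one place where the hypothesis $\beta<1$ is used in an essential way, and the argument is in fact easier here than in Proposition~\ref{estnonpref}.

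To finish, let $w\in H\cup G=H'$. There is $C\in\Cmcrit$ with $C\Lambda w\neq\emptyset$; if $w\in C$ we are done by the previous paragraph, so assume $w\notin C$. The standing assumptions (A1--A3) guarantee the hypotheses of Lemma~\ref{allpossame}, so choosing $v\in C$ with $v\Lambda w\neq\emptyset$ we get $v\Lambda^{\NN e_j}w\neq\emptyset$ for the colour $j$ witnessing the criticality of $C$; pick $n\geq1$ with $v\Lambda^{ne_j}w\neq\emptyset$, so that $A_j^n(v,w)>0$. Then
\[
0\leq A_j^n(v,w)\,m^\psi_w\leq\big(A_j^nm^\psi\big)_v\leq e^{n\beta r_j}m^\psi_v=0,
\]
since $m^\psi_v=0$ because $v\in C$. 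Hence $m^\psi_w=0$, which completes the proof of the vanishing and with it the lemma. I do not expect a real obstacle: the only points needing care are the bookkeeping identity $H\cup G=H'$ (so that the quotient in the statement is well defined and $\alpha^r$ descends correctly) and the use of $\beta<1$ to upgrade a subinvariance inequality on a critical block to $m^\psi|_C=0$; the remainder mirrors the proofs of Proposition~\ref{estnonpref} and Proposition~\ref{reducetocrither}.
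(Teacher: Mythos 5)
Your proposal is correct and follows essentially the same route as the paper: both arguments use the subinvariance relation from \cite[Proposition~4.1(a)]{aHLRS2} together with the subinvariance theorem and $\beta<1$ to force $m^\psi|_C=0$ on each (minimal) critical component, then propagate the vanishing along paths to all of $H\cup G=H'$, and finally invoke \cite[Lemma~6.2]{AaHR} and \cite[Proposition~2.2]{aHKR2} to factor the state through the quotient. The only cosmetic differences are that you propagate vanishing via iterated subinvariance along a monochromatic path supplied by Lemma~\ref{allpossame}, where the paper applies the KMS condition to $\sum_{\lambda\in w\Lambda^n}t_\lambda t_\lambda^*$ for an arbitrary $n\in\NN^k$.
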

\begin{proof} Let $\phi$ be a KMS$_\beta$ state of $(\TC^*(\Lambda),\alpha^r)$.  Fix $v\in H\cup G$.  We will show that $\phi(q_v)=0$. Recall that the critical components of $\Lambda\setminus H$ are also critical components of $\Lambda$ (in fact the minimal ones). So there exists a critical component $C$ of $\Lambda$ such that $C\Lambda v\neq \emptyset$.  We restrict $\phi$ to get a KMS$_\beta$ state of $(\TC^*(\Lambda_C),\alpha^r)$.  By \cite[Proposition~4.1(a)]{aHLRS2}
\[
A_{C,i}m^\phi|_C\leq e^{\beta r_i}m^\phi|_C
\]
for $1\leq i\leq k$. Since each $A_{C,i}$ is irreducible,  the subinvariance theorem \cite[Theorem~1.6]{Sen} gives that
\[
m^\phi|_C\neq 0\Longrightarrow \rho(A_{C, i})\leq e^{\beta r_i}\Longrightarrow r_i^{-1}\ln\rho(A_{C, i})\leq \beta<1
\]
for all $i$.  But $C$ is $j$-critical, say, and this gives $r_j^{-1}\ln\rho(A_{C, j})=1$, which is impossible. Thus $m^\phi|_C= 0$.

Now let $w\in C$ and choose $n\in\NN^k$ such that $w\Lambda^nv\neq \emptyset$. Then
\[
0=\phi(q_w)\geq \sum_{\lambda\in w\Lambda^n}\phi(t_\lambda t_\lambda^*)=\sum_{\lambda\in w\Lambda^n}e^{-\beta r\cdot n}\phi(t_{s(\lambda)})
\]
using that $\phi$ is a KMS state.
Thus  $\phi(t_{s(\lambda)})=0$ for all $\lambda\in w\Lambda^n$. In particular, $\phi(q_v)=0$.  It follows from \cite[Lemma~6.2]{AaHR} that $\phi$ vanishes on the ideal $I_{H\cup G}$, and hence $\phi$ factors through a KMS$_\beta$ state of $(\TC^*(\Lambda\backslash(H\cup G)),\alpha^r)$. 
\end{proof}

Next we need to check that the graph $\Sigma:=\Lambda\backslash(G\cup H)$ satisfies the assumptions (A1--A3). 
 Since $H$ is hereditary, every component $C$ with $C\cap H\not=\emptyset$ lies entirely inside $H$. So $H$ is a union of strongly connected components, all of which are nontrivial by (A1). The set $G$ is the union of the critical components in $\Lambda\backslash H$. So $G\cup H$ is a union of strongly connected components of $\Lambda^0$. Since $\Lambda$ has no trivial components and all components are contained in one of $\Lambda^0\backslash(G\cup H)$ or $G\cup H$, $\Lambda^0\backslash(G\cup H)$ is also the union of the components it contains. So $\Sigma$ has no nontrivial components. But $\Sigma$ may not satisfy the second part of (A1): there may be disjoint subsets $K_j$ of $\Sigma^0$ which are unions of components, which satisfy $\Sigma^0=\bigcup_{j=1}^nK_j$, and which do not speak to each other in the sense that $K_j\Sigma K_l=\emptyset$ for $j\not=l$.\footnote{For example, suppose that $\Lambda$ has three components arranged as in \S\ref{sec3cpts}, that the hereditary component $D$ belongs to $\Cmcrit$, and and that $A_{C,B,i}=0$ for all $i$. Then $\Lambda\backslash H$ is the disjoint union of $\Lambda_C$ and $\Lambda_B$.} 
 
 We set $P_j=\sum_{v\in K_j}q_v$, and 
observe that \[\{q_v:v\in K_j\}\cup\{t_\lambda:\lambda\in K_j\Sigma K_j\}\] is a Toeplitz--Cuntz--Krieger $\Sigma_{K_j}$-family in $P_j\TC^*(\Sigma)P_j$ which gives an isomorphism of $\TC^*(\Sigma_{K_j})$ onto $P_j\TC^*(\Sigma)P_j$. Since $P_jP_k=0$ for $j\not=k$, we have \[\TC^*(\Sigma)=\bigoplus_{j=1}^n  P_j\TC^*(\Sigma)P_j.\] Thus the KMS$_\beta$ states of $(\TC^*(\Sigma),\alpha^r)$ are convex combinations of KMS$_\beta$ states of $(\TC^*(\Sigma_{K_j}),\alpha^r)$. Since each $\Sigma_{K_j}$ is smaller than $\Lambda$, we have reduced the problem of computing KMS$_\beta$ states of $(\TC^*(\Lambda),\alpha^r)$ to the analogous problem for smaller graphs, each of which satisfies the hypotheses (A1--A3). We now study one of these smaller graphs, $\Sigma_K$, say.

Since we removed all the critical components in $\Lambda\backslash H$ when we removed $G$,  
\[
\beta_c:=\max\big\{r_i^{-1}\ln\rho(A_{\Sigma_K,i}):1\leq i\leq k\big\}
\]
is strictly less than $1$. This is another critical inverse temperature for the original system $(\TC^*(\Lambda),\alpha^r)$. To make the results of \S\ref{critcpts}--\S\ref{procedure} available verbatim, we consider the dynamics $\alpha^{\beta_cr}$. This new dynamics satisfies the standing hypothesis \eqref{assondyn} for $\Lambda_{\Sigma_K}$, and has the same KMS states as $(\TC^*(\Sigma_K),\alpha^r)$: the KMS$_\beta$ states of $(\TC^*(\Sigma_K),\alpha^r)$ are the KMS$_{\beta_c^{-1}\beta}$ states of $(\TC^*(\Sigma_K),\alpha^{\beta_cr})$ \cite[Lemma~2.1]{aHKR}. Now the system $(\TC^*(\Sigma_K),\alpha^{\beta_cr})$ is one to which we can apply our program (P1--P3). 
\end{itemize}

Since the requirement \eqref{assondyn} implies that $\Lambda$ has at least one critical component, the set $G$ contains at least one component. Thus the graphs $\Sigma_{K_j}$ have strictly fewer strongly connected components than $\Lambda$, and the iterative process we have described must terminate after finitely many steps.

\section{Applications and examples}\label{apps}

We now discuss implementation of our program. In the various subsections, we focus on graphs with relatively few components (\S\ref{sec2cpts} and \S\ref{sec3cpts}), graphs with just one colour, where we carry out a reality check by comparing with the results for ordinary graph algebras in \cite{aHLRS3}, and graphs in which the components are singletons, where we can do specific calculations like those for $1$-graphs in \cite{aHRabel}. We are reassured that our program does not apparently run into new difficulties.

\subsection{Graphs with two components}\label{sec2cpts} We first apply our program to a graph $\Lambda$ with exactly two nontrivial strongly connected components. Our assumptions (A1--A3) imply that one component $C$ is forwards hereditary, the other component $D$ is hereditary, and the vertex matrices of $\Lambda$ have the form
\[
A_i=\begin{pmatrix}A_{C,i}&A_{C,D,i}\\0&A_{D,i}
\end{pmatrix}
\quad\text{with $A_{C,D,i}\not=0$ for all $i$.}
\]

For $\beta>1$, our first step (P1) gives a simplex of KMS$_\beta$ states with $|\Lambda^0|=|C|+|D|$ extreme points.

At $\beta=1$, (P2) tells us to apply the procedure (C1--C3) of \S\ref{procedure}. First, suppose that $C$ is critical. Then $\Ccrit$ is $\{C\}$ or $\{C,D\}$. Either way, only $C$ is minimal. Thus $H=D$, $\Lambda\backslash H=\Lambda_C$ and Proposition~\ref{reducetocrither} implies that every KMS$_1$ state of $(\TC^*(\Lambda),\alpha^r)$ factors through a KMS$_1$ state of $(\TC^*(\Lambda_C),\alpha^r)$. Since $\Lambda_C$ is coordinatewise irreducible and $\{r_i:1\leq i\leq k\}$ are rationally independent, there is a unique KMS$_1$ state on $\TC^*(\Lambda_C)$ by \cite[Theorem~4.2]{aHKR}.  Thus there is a unique KMS$_1$ state on $\TC^*(\Lambda)$ as well.

Second, suppose that $C$ is not critical. Then $\Ccrit=\{D\}$, $H=\emptyset$, and $\Lambda\backslash(G\cup H)=\Lambda_C$. Thus every KMS$_1$ state of $\TC^*(\Lambda)$ is a convex combination of $\psi_D$ and a KMS$_1$ state of $(\TC^*(\Lambda_C),\alpha^r)$. Since $C$ is not critical, $r_i>\ln\rho(A_{C,i})$ for $1\leq i\leq k$, and \cite[Theorem~6.1]{aHLRS2} gives $|\Lambda^0_C|=|C|$ extreme KMS$_1$ states of $(\TC^*(\Lambda_C),\alpha^r)$. Thus the KMS$_1$-simplex of $\TC^*(\Lambda)$ has $|C|+1$ extreme points. 

For $0<\beta<1$, we follow (P3). If $C$ is critical, then $H=D$, $G=C$ and $\Sigma=\Lambda\backslash (G\cup H)$ is empty, and there are no KMS$_\beta$ states. So we suppose that $C$ is not critical. Then
\[
\beta_c:=\max\{r_i^{-1}\ln\rho(A_{C,i}):1\leq i\leq k\}
\]
is strictly less than $1$, and  (P3) tells us to apply the program to $\Lambda_C$. For $\beta_c<\beta<1$, the KMS$_\beta$ states are lifted from KMS$_\beta$ states of $(\TC^*(\Lambda_C),\alpha^r)$, and (P1) gives us a KMS$_\beta$ simplex with $|C|$ extreme points. Then (P2) gives us a single KMS$_{\beta_c}$ state of $(\TC^*(\Lambda_C),\alpha^r)$, and hence also of the original system. Now (P3) tells us to look at $C\backslash C=\emptyset$, and the original system has no KMS$_\beta$ states for $\beta<\beta_c$.

\subsection{Graphs with three components}\label{sec3cpts}
We now consider a finite $k$-graph $\Lambda$ which satisfies the assumptions (A1--A3) of \S\ref{procedure}, and which has three strongly connected components.  Recall from \cite[Proposition~3.1]{aHKR2} that we can order the components so that the vertex matrices $A_i$ are simultaneously block upper-triangular. The assumption of ``no trivial components'' says that these decompositions have no strictly upper-triangular diagonal blocks. The component $C$ such that the $\{A_{C,i}\}$ are the top blocks is forwards hereditary, and the component $D$ such that the $\{A_{D,i}\}$ are the bottom blocks is hereditary. We call the remaining component $B$. Then each $A_i$ has the form
\begin{equation}\label{block3cpts}
A_i=\begin{pmatrix}A_{C,i}&A_{C,B,i}&A_{C,D,i}\\0&A_{B,i}&A_{B,D,i}\\0&0&A_{D,i}\end{pmatrix}.
\end{equation}

We now consider a dynamics $\alpha^r:\TT\to \Aut\TC^*(\Lambda)$ such that $r\in (0,\infty)^k$ satisfies \eqref{assondyn} and has rationally independent coordinates. We want to find the KMS$_1$ states of the system $(\TC^*(\Lambda),\alpha^r)$, and we run through the program. The first step (P1) gives a KMS$_\beta$ simplex with $|\Lambda^0|$ extreme points for every $\beta>1$.

Next (P2) tells us to look at the KMS$_1$ states using the procedure (C1--C3) of \S\ref{procedure}. Suppose first that $C$ is critical. Since (A1) says that $C$ and $D$ are not isolated, there must be paths from $D$ to $C$ (possibly going through $B$). Thus the set $H$ in (C1) is either $D$ or $B\cup D$. Either way, (C1) reduces the problem of computing the KMS$_1$ states on $\TC^*(\Lambda)$ to the analogous problem for a graph with one or two components, and the analysis of \S\ref{sec2cpts} tells us how to do this.

We suppose next that $C$ is not critical and  $B$ is critical. If there exists $j$ such that the block $A_{B,D,j}$ in \eqref{block3cpts} is nonzero, then the set $H$ in (C1) is $D$, and again the problem reduces to the same one for the graph $\Lambda_{C\cup B}$ with two components. This is either a disjoint union of two irreducible graphs, in which case $\TC^*(\Lambda_{C\cup B})=\TC^*(\Lambda_C)\oplus\TC^*(\Lambda_B)$ and we can study the summands separately, or the analysis of \S\ref{sec2cpts} applies. So we suppose that $A_{B,D,i}=0$ for all $i$. Then both $B$ and $D$ are hereditary. If $D$ is also critical, then (C2) gives two KMS$_1$ states $\psi_B$ and $\psi_D$, and a $(|C|-1)$-dimensional simplex of KMS$_1$ states lifted from states of $(\TC^*(\Lambda_C),\alpha^r)$; thus the KMS$_1$ simplex of $(\TC^*(\Lambda),\alpha^r)$ has dimension $|C|+1$. If $D$ is not critical, the simplex has dimension $|C|+|D|$.

Finally, we suppose that neither $C$ nor $B$ is critical. Then $D$ has to be critical, and the KMS$_1$ simplex of $(\TC^*(\Lambda),\alpha^r)$ has dimension $|C|+|B|$.

Below $\beta=1$, step (P3) tells us to run the program for a smaller graph $\Sigma:=\Lambda\backslash(G\cup H)$. Since $\Sigma$ has one or two components, we have seen in the previous subsection that the program will give us all the KMS states.

\subsection{Comparison with previous results for $1$-graphs}

When $k=1$, a $k$-graph $\Lambda$ is the path category of a directed graph $E$, and its Toeplitz algebra is the algebra $\TC^*(E)$ whose KMS states were analysed in \cite{aHLRS1} and \cite{aHLRS4}. We write $A$ for the vertex matrix of $E$, and consider the dynamics $\alpha:t\mapsto \gamma_{e^{it}}$ on $\TC^*(E)$ studied in \cite{aHLRS1, aHLRS4}. Then the dynamics in this paper is given by $\alpha':t\mapsto \alpha_{t\ln\rho(A)}$; thus the KMS$_\beta$ states of $(\TC^*(E),\alpha)$ are the KMS$_{\beta (\ln\rho(A))^{-1}}$ states of $(\TC^*(E),\alpha')$ (see, for example, \cite[Lemma~2.1]{aHKR}). In particular, the KMS$_{\ln\rho(A)}$ simplex of $(\TC^*(E),\alpha)$ should be the KMS$_1$ simplex of our $(\TC^*(E),\alpha')$.

To find the KMS$_{\ln\rho(A)}$ states of $(\TC^*(E),\alpha)$, we apply the procedure of \cite[Theorem~4.3]{aHLRS4}, which focuses on the set $\mc(E)$ of ``minimal critical components'' in $E^0/\!\!\sim$. Under our hypotheses, $E^0/\!\!\sim$ is the set $\Cc$ of nontrivial strongly connected components of $\Lambda^0=E^0$; a component $C$ is critical if $\rho(A_C)=\rho(A)$, and minimal if $D\leq C$ and $D$ critical imply $D=C$. Part~(a) of \cite[Theorem~4.3]{aHLRS4} describes KMS$_{\ln\rho(A)}$ states $\{\psi_C:C\in \mc(E)\}$, and part (b) says that every KMS$_{\ln\rho(A)}$ state is a convex combination of the $\psi_C$ and a state lifted from the quotient associated to the hereditary closure of $C':=\bigcup_{C\in \mc(E)}C$. When we carry out our procedure from \S\ref{procedure}, we take two quotients: first in step (C1) by an ideal $I_H$, and then in step (C3) by an ideal $I_G$. The hereditary closure of $C'$ is precisely $G\cup H$, and hence \cite[Theorem~4.3]{aHLRS4} merely does both quotients in one hit. Thus the KMS$_{\ln\rho(A)}$ states of $(\TC^*(E),\alpha)$ and the KMS$_1$ states of our $(\TC^*(E),\alpha')$ are the same, as expected. (Which is reassuring, because our constructions in \S\ref{extendPF}--\S\ref{partial} were based on those of \cite[\S4]{aHLRS4}.)

The other main result in \cite{aHLRS4} describes the KMS$_\beta$ states at a fixed inverse temperature $\beta$, as follows. First, consider the hereditary closure $H_\beta$ of the components $C$ with $\ln \rho(A_C)>\beta$. If $H_\beta=E^0$, there are no KMS$_\beta$ states. Otherwise,  Theorem~5.3 of \cite{aHLRS4} says that all KMS$_\beta$ states of $(\TC^*(E),\alpha)$ factor through a state of $(\TC^*(E\backslash H_\beta),\alpha)$, and are then given by \cite[Theorem~3.1]{aHLRS1} provided that $\beta>\ln\rho(A_{E^0\backslash H_\beta})$, that is, $\beta$ is not critical. None of these states factor through $C^*(E)$. (The subtleties in \cite[Theorem~5.3]{aHLRS4} involving the saturation $\Sigma H_\beta$ do not arise here because we are not allowing trivial components.)

If $\beta$ is critical, so that $\beta=\ln\rho(A_C)$ for some component $C$, then \cite[Theorem~5.3(c)]{aHLRS4} tells us to look also at the hereditary closure $K_\beta$ of $\{C:\ln\rho(A_C)\geq \beta\}$, which strictly contains $H_\beta$. Then applying \cite[Theorem~4.3]{aHLRS4} to $E\backslash H_\beta$ gives the extreme KMS$_\beta$ states $\{\psi_C:C\in \mc(E\backslash H_\beta)\}$ and other states lifted from $\TC^*(E\backslash K_\beta)$; this is our step (C3). In our situation (no trivial components), only convex combinations of the $\psi_C$ factor through states of $C^*(E)$.

\subsection{Concrete examples}\label{examples}

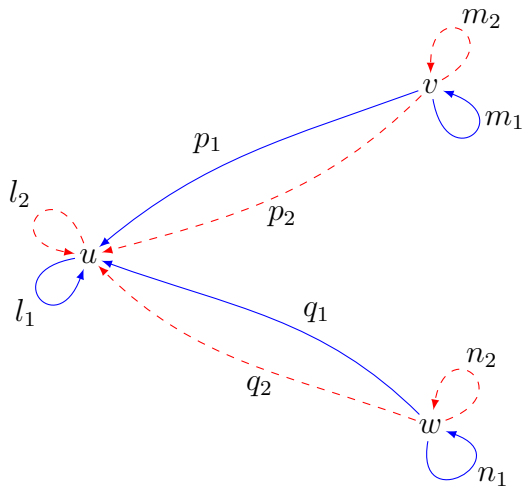
\begin{figure}[h]
\begin{tikzpicture}[scale=1.5]
 \node[inner sep=0.5pt, circle] (u) at (0,0) {$u$};
    \node[inner sep=0.5pt, circle] (v) at (3,1.5) {$v$};
    \node[inner sep=0.5pt, circle] (w) at (3,-1.5) {$w$};
\draw[-latex, blue] (u) edge [out=190, in=250, loop, min distance=20, looseness=2.5] (u);
\draw[-latex, red, dashed] (u) edge [out=110, in=170, loop, min distance=20, looseness=2.5] (u);
\draw[-latex, blue] (v) edge [out=280, in=340, loop, min distance=20, looseness=2.5] (v);
\draw[-latex, red, dashed] (v) edge [out=30, in=90, loop, min distance=20, looseness=2.5] (v);
\draw[-latex, blue] (w) edge [out=260, in=340, loop, min distance=20, looseness=2.5] (w);
\draw[-latex, red, dashed] (w) edge [out=20, in=80, loop, min distance=20, looseness=2.5] (w);

\draw[-latex, red, dashed] (v) edge [out=225, in=15]  (u);
\draw[-latex, blue] (v) edge [out=200, in=40]   (u);
\draw[-latex, red, dashed] (w) edge [out=160, in=315] (u);
\draw[-latex, blue] (w) edge [out=135, in=340]  (u);
\node at (-0.55, -0.5) {$l_1$};
\node at (-0.6,0.55) {$l_2$};
\node at (3.65, 1.2) {\color{black} $m_1$}; 
\node at (3.45, 2.1) {\color{black} $m_2$};
\node at (3.55, -1.95) {\color{black} $n_1$};
\node at (3.45, -0.9 ) {\color{black} $n_2$};
\node at (1.05, 1) {$p_1$}; 
\node at (1.7, 0.35) {$p_2$};
\node at (2,-0.5) {${q_1}$};
\node at (1.5, -1.15) {$q_2$};
\end{tikzpicture}
\caption{A dumbbell graph with $2$ hereditary components.}\label{fig3cptdumbbell}
\end{figure}

Our examples are dumbbell graphs with three components and minimal activity between the components. The most interesting case seems to be graphs with skeleton shown in Figure~\ref{fig3cptdumbbell}, where as usual the blue loop at $w$ labelled $n_1$ means there are $n_1$ blue loops at $w$. Thus the vertex matrices of the $2$-coloured graph in Figure~\ref{fig3cptdumbbell} for the ordering $\{u,v,w\}$ of $\Lambda^0$ are 
\[
A_1=\begin{pmatrix}l_1&p_1&q_1\\0&m_1&0\\0&0&n_1\end{pmatrix}\quad\text{and}\quad 
A_2=\begin{pmatrix}l_2&p_2&q_2\\0&m_2&0\\0&0&n_2\end{pmatrix}.
\]
Kumjian and Pask proved that the graph in Figure~\ref{fig3cptdumbbell} is the skeleton of a $2$-graph if and only if the matrices $A_1$ and $A_2$ commute \cite[\S6]{KP} (and is then the skeleton of many such graphs). For the matrices $A_1$ and $A_2$, this is equivalent to 
\begin{align}\label{equivtocomm}
l_1p_2+p_1m_2&=l_2p_1+p_2m_1,\text{ and }\\
l_1q_2+q_1n_2&=l_2q_1+q_2n_1.\notag
\end{align} 
 
\begin{example}
We consider a $2$-graph $\Lambda$ with skeleton shown in Figure~\ref{fig3cptdumbbell}, so that the  
vertex matrices are
\[
A_1=\begin{pmatrix}2&2&3\\0&4&0\\0&0&5\end{pmatrix}\quad\text{and}\quad
A_2=\begin{pmatrix}2&1&2\\0&3&0\\0&0&4\end{pmatrix}.
\]
We consider the preferred dynamics given by $r=(\ln 5, \ln 4)$. It follows from \cite[Proposition~A.1]{aHKR2} that $\ln 4$ and $\ln 5$ are rationally independent.

In this graph, there is one critical component $D=\{w\}$. The other blocks in the decomposition of $\Lambda^0$ in Proposition~\ref{evectors-new} are $F=\{u\}$ and $H=\{v\}$, and for the ordering $\{u,w,v\}$ of $\Lambda^0$, the vertex matrices become
\[
A_1=\begin{pmatrix}2&3&2\\0&5&0\\0&0&4\end{pmatrix}\quad\text{and}\quad
A_2=\begin{pmatrix}2&2&1\\0&4&0\\0&0&3\end{pmatrix}.
\]
In the notation of Proposition~\ref{evectors-new}  we have $E_1=(2)=E_2$, $B_1=(3)$ and $B_2=(2)$. The common unimodular Perron--Frobenius eigenvector $x$ of $A_{D,1}=(5)$ and $A_{D,2}=(4)$ is the scalar $1$, and the vector $y$ in Proposition~\ref{evectors-new} is the scalar 
\[
(\rho(A_{D,1})1_F-E_1)^{-1}B_1x=(5-2)^{-1}3=1.
\] 
(As a reality check, we confirm that $(\rho(A_{D,2})1_F-E_2)^{-1}B_2x=(4-2)^{-1}2$ is also $1$.) Thus the vector $z$ in Proposition~\ref{extendKMS}  is $(1,1,0)$, and that proposition gives a KMS$_1$ state $\psi_D$ of $(\TC^*(\Lambda),\alpha^r)$ such that $\psi_D(q_u)=2^{-1}=\psi_D(q_w)$ and $\psi_D(q_v)=0$. Since $\rho(A_{D,1})=5=\rho(A_1)$ and $\rho(A_{D,2})=4=\rho(A_2)$, and the spectral radii of the other diagonal blocks are all smaller, Theorem~\ref{critvshered} says that every KMS$_1$ state of $(\TC^*(\Lambda),\alpha^r)$ is a convex combination $a\psi_D+(1-a)(\theta\circ q_D)$  for some KMS$_1$ state $\theta$  of $(\TC^*(\Lambda\backslash D),\alpha^r)=(\TC^*(\Lambda_{F\cup H}),\alpha^r)$. 

So we want to analyse the KMS$_1$ states on $\TC^*(\Lambda_{F\cup H})$ for the dynamics induced by the preferred dynamics $\alpha^r$ on $\TC^*(\Lambda)$. 
The graph $\Lambda_{F\cup H}$ has vertex matrices 
\[
A_{F\cup H,1} =\begin{pmatrix}2&2\\0&4\end{pmatrix}\quad\text{and}\quad
A_{F\cup H,2}=\begin{pmatrix}2&1\\0&3\end{pmatrix}.
\]
Since the dynamics satisfies $r_i=\ln\rho(A_i)>\ln\rho(A_{F\cup H,i})$ for $i=1,2$, the inverse temperature $\beta=1$ lies in the range for which Theorem~6.1 of \cite{aHLRS2} applies to $(\TC^*(\Lambda_{F\cup H}),\alpha^r)$. Thus every KMS$_1$ state of $(\TC^*(\Lambda_{F\cup H}),\alpha^r)$ has the form $\phi_\epsilon$ for $\epsilon$ in a simplex lying in $[0,\infty)^{F\cup H}$, which is described in \cite[Theorem~6.1(c)]{aHLRS2}.

 As we did for other dumbbell graphs in \cite[\S4]{aHRabel}, we directly solve the subinvariance relation
\begin{equation}\label{subinv}
\epsilon:=\prod_{i=1}^2(1-e^{-r_i}A_{F\cup H,i})m\geq 0
\end{equation}
for a vector $m\in [0,\infty)^{F\cup H}$ satisfying $\|m\|_1=1$. 
Thus we seek $t\in [0,1]$ such that $m=(m_u, m_v)=(1-t,t)$ satisfies \eqref{subinv}. Multiplying out the product gives 
\[
\epsilon=\frac{1}{20}\Big(5\begin{pmatrix}1&0\\0&1\end{pmatrix}-A_{F\cup H,1}\Big)\Big(4\begin{pmatrix}1&0\\0&1\end{pmatrix}-A_{F\cup H,2}\Big)\begin{pmatrix}1-t\\t\end{pmatrix}
=\frac{1}{20}\begin{pmatrix}6-11t\\t\end{pmatrix},
\]
which is nonnegative if and only if $t\in [0,\frac{6}{11}]$. Taking $t=\frac{6}{11}$ gives a KMS$_1$ state $\psi_H$ of $(\TC^*(\Lambda_{F\cup H}),\alpha^r)$ such that  $\psi_H(q_u)=\frac{5}{11}$ and $\psi_H(q_v)=\frac{6}{11}$; taking $t=0$ gives a state $\psi$ such that $\psi(q_v)=0$ and $\psi(q_u)=1$, which by an application of \cite[Lemma~6.2]{AaHR} factors through a state $\phi$ of $\TC^*(\Lambda_F)$. 

Thus the KMS$_1$ simplex of $(\TC^*(\Lambda),\alpha^r)$ has extreme points $\psi_D$, $\psi_H\circ q_D$ and $\phi\circ q_{D\cup H}$.

We now consider $\beta<1$. The only critical component of $\Lambda$ is $D=\{w\}$, so (P3) tells us to apply (P1) or (P2) to $\TC^*(\Lambda_{F\cup H})$, and we get KMS$_\beta$ states for $\beta<1$ which factor through the quotient map $q_D$.  The next critical inverse temperature is 
\[
\beta_c=\max\Big\{\frac{\ln 4}{\ln 5},\ \frac{\ln 3}{\ln 4}\Big\}=\frac{\ln 4}{\ln 5}.
\]
For $\beta_c<\beta<1$, Theorem~6.1 of \cite{aHLRS2} gives a KMS$_\beta$ simplex of dimension $1$. Since $r$ has rationally independent coordinates\footnote{There is a subtlety here. Strictly speaking, we are applying Theorem~\ref{dominant2cpt} to the dynamics $\alpha^{\beta_cr}$ associated to the vector $\beta_cr=\frac{\ln 4}{\ln 5}r$. But this vector has rationally independent coordinates if and only if $r$ does, and $r_1=\ln 5$ and $r_2=\ln 4$ are rationally independent by \cite[Proposition~A.1]{aHKR2}.}, Theorem~\ref{dominant2cpt} says that there is a unique KMS$_{\beta_c}$  state $\psi_{\{v\}}=\psi_H$ of $(\TC^*(\Lambda_{F\cup H},\alpha^r)$ from the critical components, and hence again $1$-dimensional simplices of KMS$_{\beta_c}$ states on $(\TC^*(\Lambda\backslash D),\alpha^r)$ and $(\TC^*(\Lambda),\alpha^r)$. For $\beta<\beta_c$, all KMS$_\beta$ states factor through $q_{\{v,w\}}=q_{D\cup H}$. The dynamics on $\TC^*(\Lambda_{F})$ has another critical inverse temperature at 
\[
\beta_c'=\max\Big\{\frac{\ln 2}{\ln 5},\ \frac{\ln 2}{\ln 4}\Big\}=\frac{\ln 2}{\ln 4},
\]
and a single KMS$_\beta$ state for $\beta_c'\leq \beta<\beta_c$. Thus so does $(\TC^*(\Lambda),\alpha^r)$. For $\beta<\beta_c'$, there are no KMS$_\beta$ states.
\end{example}

In the previous example, we had $\rho(A_{\{u\},i})<\rho(A_{\{v\},i})<\rho(A_{\{w\},i})$ for all~$i$. In the next example there are also two hereditary components, but neither dominates the other. This is where our new Theorem~\ref{critvshered} is useful.

\begin{example}
We take a $2$-graph $\Lambda$ with skeleton as described in Figure~\ref{fig3cptdumbbell}, with $p=(1,2)$ and $q=(1,1)$. Then one checks that $l=(5,3)$, $m=(10,13)$ and $n=(11,9)$ satisfy the relations \eqref{equivtocomm}, and hence for these choices there is a $2$-graph $\Lambda$ with the skeleton in Figure~\ref{fig3cptdumbbell}. With respect to the ordering $\{u,v,w\}$ of $\Lambda^0$, the vertex matrices are
\begin{equation*}
A_1=\begin{pmatrix} 5&1&1\\0&10&0\\0&0&11 \end{pmatrix}\quad\text{and}\quad A_2=\begin{pmatrix} 3&2&1\\0&13&0\\0&0&9 \end{pmatrix}.
\end{equation*}
The numbers have been chosen quite carefully: the matrices have to commute, and we have chosen numbers so each of $l,m,n$ has coordinates that are coprime, so the vertex matrices of the subgraphs $\Lambda_C$ have $\{\ln\rho(A_{C,i}):1\leq i\leq k\}$ rationally independent for all components $C$ (see \cite[Proposition~A.1]{aHKR2}). We also have $\rho(A_1)=11$ and $\rho(A_2)=13$, so $\ln\rho(A_1)$ and $\ln\rho(A_2)$ are rationally independent too. Let $C:=\{u\}$, $B:=\{v\}$ and $D:=\{w\}$. 
Then we have 
\begin{align*}
\rho(A_{B,1})&<\rho(A_1)=11=\rho(A_{D,1}),\\ 
\rho(A_{D,2})&<\rho(A_2)=13=\rho(A_{B,2}),\text{ and}\\
\rho(A_{C,i})&\leq 5<9\leq\min\big\{\rho(A_{B,i}),\rho(A_{D,i})\big\}\quad\text{for $i=1,2$.}
\end{align*}
We consider the preferred dynamics $\alpha^r$ on $\TC^*(\Lambda)$, so that $r=(\ln 11, \ln 13)$.

Proposition~\ref{extendKMS} gives two KMS$_1$ states $\psi_B$ and $\psi_D$ of $(\TC^*(\Lambda),\alpha^r)$ such that
\[
m^{\psi_B}=\begin{pmatrix}\psi_B(q_u)\\\psi_B(q_v)\\\psi_B(q_w)\end{pmatrix}
=\frac{5}{6}\begin{pmatrix}1/5\\1\\0\end{pmatrix}\quad\text{and}\quad
m^{\psi_D}
=\frac{6}{7}\begin{pmatrix}1/6\\0\\1\end{pmatrix}.
\]
For all $e\in v\Lambda^{e_1}$ we have $s(e)=v$, and hence Proposition~\ref{extendKMS} implies that
\[
\textstyle{\psi_B(t_et_e^*)=\rho(A_1)^{-1}\psi_B(q_v)=(11)^{-1}\cdot\frac{5}{6}=\frac{5}{66}.}
\]
Thus
\[
\psi_B\Big(\sum_{e\in v\Lambda^{e_1}}t_et_e^*\Big)=10\cdot\textstyle{\frac{5}{66}}<\textstyle{\frac{5}{6}}=\psi_B(q_v),
\]
and the state $\psi_B$ does not factor through a state of $C^*(\Lambda)$. A similar argument (using red edges instead of blue ones) shows that $\psi_D$ does not factor through $C^*(\Lambda)$ either.

The dynamics on the quotient $q_{B\cup D}(\TC^*(\Lambda))=\TC^*(\Lambda_C)$ induced by the preferred dynamics $\alpha^r$ on $\TC^*(\Lambda)$ falls in the range covered by \cite[Theorem~6.1]{aHLRS2}. Hence there is a unique KMS$_1$ state $\phi$ on $(\TC^*(\Lambda_C),\alpha^r)$. Since both components $B$ and $D$ are critical, Theorem~\ref{critvshered} implies that the KMS$_1$ simplex of $(\TC^*(\Lambda),\alpha^r)$ has extreme points $\psi_B$, $\psi_D$ and $\phi\circ q_{B\cup D}$.

For $\beta<1$, Lemma~\ref{lemP3} implies that all KMS$_\beta$ states of $\TC^*(\Lambda)$ factor through $\TC^*(\Lambda_C)$.
The induced dynamics $\alpha^r$ on $\TC^*(\Lambda_C)$ has critical inverse temperature 
\[
\beta_c=\max\Big\{\frac{\ln 5}{\ln 11},\ \frac{\ln 3}{\ln 13}\Big\}=\frac{\ln 5}{\ln 11}.
\]
Since $11$ and $13$ are coprime, the vector $r$ has rationally independent coordinates, and it follows as in the previous example from \cite[Theorem~6.1]{aHLRS2} and Theorem~\ref{dominant2cpt} (or \cite[Proposition~4.2]{aHKR}) that the system $(\TC^*(\Lambda),\alpha^r)$ has a single KMS$_\beta$ state for $\beta_c\leq \beta<1$. Also, there are no KMS$_\beta$ states of $\TC^*(\Lambda)$ for $\beta<\beta_c$.
\end{example}

\appendix
\section{Dumbbell graphs with three components}\label{app3vertex}

As we pointed out earlier, while Theorem~\ref{orderrho} is essentially a result about commuting integer matrices, our proof is indirect and makes heavy use of Perron--Frobenius theory. So, as a reality check, we tried to prove it directly. For $2$-graphs with two components, it was quite easy to see why it works (see Example~\ref{ex2dumbbell}). It was a little harder to see what was going on for three components, but the exercise was instructive --- it led us, for example, to the strengthening of Theorem~\ref{orderrho} in Corollary~\ref{orderrho2} (see Remark~\ref{r=0}).

We suppose throughout this appendix that $\Lambda$ is a dumbbell $2$-graph with three strongly connected components. Thus $\Lambda$ has three vertices $u$, $v$ and $w$, and vertex matrices of the form
\begin{equation*}
A_i=\begin{pmatrix}
m_i&q_i&r_i\\0&n_i&s_i\\0&0&p_i
\end{pmatrix}\quad\text{for $i=1,2$.}
\end{equation*}
We write $C=\{u\}$, $B=\{v\}$ and $D=\{w\}$. The hypothesis $C\Lambda^{e_i}D\not=\emptyset$ and $B\Lambda^{e_i}D\not=\emptyset$ in Theorem~\ref{orderrho} says that all the $r_i$ and $s_i$ are nonzero, and we assume until further notice that this holds. 

We now aim to prove Theorem~\ref{orderrho} directly. We may as well suppose that the $j$ in the hypothesis of Theorem~\ref{orderrho} is $j=1$ (otherwise swap colours). So we assume that
\begin{equation}\label{hypsr}
p_1=\rho(A_{D,1})>\max\{\rho(A_{C,1}),\rho(A_{B,1})\}=\max\{n_1,m_1\},
\end{equation}
and aim to prove that $p_2>n_2$ and $p_2>m_2$.

The factorisation property implies that the vertex matrices $A_i$ commute. Looking at the super-diagonal entries in $A_1A_2$ and $A_2A_1$, and rearranging, shows that $A_1A_2=A_2A_1$ if and only if
\begin{itemize}
\item[(D1)] $q_2(n_1-m_1)=q_1(n_2-m_2)$,
\item[(D2)] $s_2(p_1-n_1)=s_1(p_2-n_2)$, and 
\item[(D3)] $r_2(p_1-m_1)+q_2s_1=r_1(p_2-m_2)+q_1s_2$.
\end{itemize}
The hypothesis \eqref{hypsr} implies that $p_1>n_1$, and since $s_i\not=0$, (D2) forces $p_2>n_2$.  Thus it remains to show that $p_2>m_2$.

The first case we consider is when $q_1$ and $q_2$ are both nonzero. From (D1) we deduce that the numbers $m_i-n_i$ have the same sign. If $n_1\geq m_1$, then we have $n_2\geq m_2$ and $p_2>n_2\geq m_2$. So we assume that $m_1>n_1$ and $m_2>n_2$, and aim to prove that $p_2>m_2$.

We break up each side of (D2) using $p_i=(p_i-m_i)+m_i$, multiply the resulting equation by 
$q_2$, and apply (D1) to get
\begin{align}\label{adjustC2}
q_2s_1(p_2-m_2)+q_2s_1(m_2-n_2)&=q_2s_2(p_1-m_1)+q_2s_2(m_1-n_1)\\
&=q_2s_2(p_1-m_1)+q_1s_2(m_2-n_2).\notag
\end{align}
Next, we swap sides in (D3) and multiply by $m_2-n_2$ to get
\begin{equation}\label{adjustC3}
r_1(m_2-n_2)(p_2-m_2)+q_1s_2(m_2-n_2)=r_2(m_2-n_2)(p_1-m_1)+q_2s_1(m_2-n_2).
\end{equation}
Adding both sides of \eqref{adjustC2} and \eqref{adjustC3} gives an equation in which each of $q_1s_2(m_2-n_2)$ and $q_2s_1(m_2-n_2)$ appears on both sides; cancelling them gives
\begin{align*}
r_1(m_2-n_2)(p_2-m_2)+&q_2s_1(p_2-m_2)\\&=r_2(m_2-n_2)(p_1-m_1)+q_2s_2(p_1-m_1).
\end{align*}
Equivalently, we have
\[
\big(r_1(m_2-n_2)+q_2s_1\big)(p_2-m_2)=\big(r_2(m_2-n_2)+q_2s_2\big)(p_1-m_1).
\]
Since the coefficients of $(p_2-m_2)$ and $(p_1-m_1)$ are both positive, we deduce that $p_2-m_2$ and $p_1-m_1$ have the same sign. Thus the hypothesis $p_1>m_1$ implies that $p_2>m_2$, as required.

The second case we consider is when $q_1=0=q_2$. Then (D1) gives no information and (D3) collapses to $r_2(p_1-m_1)=r_1(p_2-m_2)$. Since $r_i\not=0$, we deduce that the numbers $p_i-m_i$ have the same same sign. Since $p_1>\max\{m_1,n_1\}$, we deduce that $p_2>m_2$, as required.

The remaining case to consider is when exactly one $q_i$ is zero. If $q_1=0$ and $q_2\neq 0$, then (D3) gives
\[
r_1(p_2-m_2)=r_2(p_1-m_1)+q_2s_1>r_2(p_1-m_1)\geq 0.
\]
Thus $p_2>m_2$. On the other hand if $q_1\neq0$ and $q_2= 0$, then (D1) reduces to $q_1(n_2-m_2)=0$, and so $n_2=m_2$. Hence, $p_2>n_2=m_2$ as required. This completes the direct proof of Theorem~\ref{orderrho} for dumbbell graphs with three components.

The next remark shows that we can relax the hypotheses of Theorem~\ref{orderrho} slightly. Graphs of this type served as motivation for our development of Corollary~\ref{orderrho2}.

\begin{rmk}\label{r=0}
Again consider a $2$-graph $\Lambda$ with three vertices, and keep the above notation. Suppose that $r_1=r_2=0$, but all $q_i$ and $s_i$ are nonzero. Notice that whilst $C\Lambda^{e_i}D=\emptyset$, we still have $C\Lambda^{\NN e_i}D\neq\emptyset$ since $C\Lambda^{e_i}B=\emptyset$ and $B\Lambda^{e_i}D=\emptyset$. As before, our goal is to show that if
\[
p_1=\rho(A_{D,1})>\max\{\rho(A_{C,1}),\rho(A_{B,1})\}=\max\{n_1,m_1\},
\]
then $p_2>n_2$ and $p_2>m_2$. 

We again derive (D1--D3). Notice that (D3) reduces to 
\begin{itemize}
\item[(D3')] $q_2s_1=q_1s_2 \Longleftrightarrow \frac{q_2}{q_1}=\frac{s_2}{s_1}$.
\end{itemize}
As before, since $p_1>n_1$ and $s_i\neq 0$, (D2) forces $p_2>n_2$, and it remains to show that $p_2>m_2$. 

First we consider the case where $n_1=m_1$. Since $q_i\neq 0$, (D1) shows that $n_2=m_2$. Thus $p_2>n_2=m_2$ as required. 

Secondly, we consider the situation where $n_1\neq m_1$. Combining (D1) and (D2) with (D3'), we get that
\[
\frac{n_2-m_2}{n_1-m_1}=\frac{q_2}{q_1}=\frac{s_2}{s_1}=\frac{p_2-n_2}{p_1-n_1}.
\]
Since $p_1>n_1$ and $p_2>n_2$, we must have that $n_2\neq m_2$, and $n_2-m_2$ and $n_1-m_1$ must have the same sign. If $n_1>m_1$, then $n_2>m_2$, and so $p_2>n_2>m_2$ as required. Alternatively, $n_1<m_1$. Since $p_1>m_1$ by assumption, we have $0<m_1-n_1<p_1-n_1$. Thus
\[
\frac{p_2-n_2}{p_1-n_1}=\frac{m_2-n_2}{m_1-n_1}>\frac{m_2-n_2}{p_1-n_1}.
\]
Hence, $p_2-n_2>m_2-n_2$, and so $p_2>m_2$. 
\end{rmk}

\begin{rmk}
Now we consider the case where $s_i=0$ for $i=1,2$. These graphs do not satisfy the hypotheses of either Theorem~\ref{orderrho} or Corollary~\ref{orderrho2}, and we shall see that they are examples of $2$-graphs in which the conclusions of Theorem~\ref{orderrho} or Corollary~\ref{orderrho2} do not hold.

When $s_i=0$, (D2) says nothing, and (D3) reduces to $r_2(p_1-m_1)=r_1(p_2-m_2)$. Thus $p_1>\max\{n_1,m_1\}$ implies $p_2>m_2$. But there is no relation relating $p_2$ to $n_2$. Indeed, consider the matrices
\[
A_1=\begin{pmatrix}
1&2&2\\0&3&0\\0&0&5
\end{pmatrix}
\quad\text{and}\quad
A_2=\begin{pmatrix}
1&3&1\\0&4&0\\0&0&3
\end{pmatrix}.
\]
Then there are $2$-graphs $\Lambda$ with these vertex matrices, and in fact many: see \cite[\S6]{KP} (or \cite{HRSW} for a concrete description of these graphs). For such $\Lambda$, we have
\begin{align*}
\rho(A_{D,1})=5&>\max\{\rho(A_{B,1}),\rho(A_{C,1})\}=\max\{3,1\}=3,\quad\text{but}\\
\rho(A_{D,2})=3&<\rho(A_{B,2})=4.
\end{align*}
So there is no version of Theorem~\ref{orderrho} for such $\Lambda$.
\end{rmk}


\begin{thebibliography}{19}

\bibitem{AaHR} Z. Afsar, A. an Huef and I. Raeburn, \emph{KMS states on $C^*$-algebras associated to local homeomorphisms}, Internat. J. Math. \textbf{25} (2014), article no. 1450066, 1--28.

\bibitem{AaHR2} Z. Afsar, A. an Huef and I. Raeburn, \emph{KMS states on $C^*$-algebras associated to a family of $*$-commuting local homeomorphisms}, arXiv:1701.07183.


\bibitem{C} J. Christensen, \emph{Symmetries of the KMS simplex}, arXiv:1710:04412.


\bibitem{FGKP} C. Farsi, E. Gillaspy, S. Kang and J.A. Packer, \emph{Separable representations, KMS states, and wavelets for higher-rank graphs}, J. Math. Anal. Appl. \textbf{434} (2016), 241--270.


\bibitem{HRSW} R. Hazlewood, I. Raeburn, A. Sims and S.B.G. Webster, \emph{Remarks on some fundamental results about higher-rank graphs and their $C^*$-algebras}, Proc. Edinburgh Math. Soc. \textbf{56} (2013), 575--597.

\bibitem{aHKR} A. an Huef, S. Kang and I. Raeburn, \emph{Spatial realisations of KMS states on the $C^*$-algebras of higher-rank graphs}, J. Math. Anal. Appl. {\bf 427} (2015), 977--1003.

\bibitem{aHKR2} {A. an Huef, S. Kang and I. Raeburn}, \emph{KMS states on the operator algebras of reducible higher-rank graphs}, Integral Equations \& Operator Theory \textbf{88} (2017), 91--126.


\bibitem{aHLRS1} {A. an Huef, M. Laca, I. Raeburn and A. Sims}, \emph{KMS states on the $C^*$-algebras of finite graphs}, J. Math. Anal. Appl. {\bf 405} (2013), 388--399.

\bibitem{aHLRS2} {A. an Huef, M. Laca, I. Raeburn and A. Sims}, \emph{KMS states on $C^*$-algebras associated to higher-rank graphs}, J. Funct. Anal.  {\bf 266} (2014), 265--283.

\bibitem{aHLRS3} {A. an Huef, M. Laca, I. Raeburn and A. Sims}, \emph{KMS states on the $C^*$-algebra of a higher-rank graph and periodicity in the path space}, J. Funct. Anal. \textbf{268} (2015), 1840--1875.

\bibitem{aHLRS4} A. an Huef, M. Laca, I. Raeburn and A. Sims, \emph{KMS states on the $C^*$-algebras of reducible graphs}, Ergodic Theory Dynam. Systems \textbf{35} (2015), 2535--2558.

\bibitem{aHRabel} A. an Huef and I. Raeburn, \emph{Equilibrium states on graph algebras}, in Operator Algebras and Applications, Proc. 2015 Abel Symposium, Springer, 2016, pp. 171--183.


\bibitem{KP}{A. Kumjian and D. Pask}, \emph{Higher-rank graph $C^*$-algebras}, New York J. Math. {\bf 6} (2000), 1--20.

\bibitem{KPS}{A. Kumjian, D. Pask and A. Sims}, \emph{Generalised morphisms of {$k$}-graphs: {$k$}-morphs}, Trans. Amer. Math. Soc. \textbf{363} (2011), 2599--2626.

\bibitem{LLNSW} M. Laca, N.S. Larsen, S. Neshveyev, A. Sims and S.B.G. Webster, \emph{Von Neumann algebras of strongly connected higher-rank graphs}, Math. Ann. \textbf{363} (2015), 657--678.


\bibitem{M} R. McNamara, KMS states of graph algebras with a generalised gauge dynamics, PhD thesis, Univ. of Otago, 2016.


\bibitem{N} S. Neshveyev, \emph{KMS states on the $C^*$-algebras of non-principal groupoids}, J.  Operator Theory \textbf{70} (2013), 513--530.


\bibitem{Ped}{G.K. Pedersen}, $C^*$-Algebras and their Automorphism Groups, London Math. Soc. Monographs, vol. 14, Academic Press, London, 1979.

\bibitem{R} I. Raeburn, Graph Algebras, CBMS Regional Conference  Series in Math., vol.~103, Amer. Math. Soc., Providence, 2005.

\bibitem{RS}{I. Raeburn and A. Sims}, \emph{Product systems of graphs and the Toeplitz algebras of higher-rank graphs}, J. Operator Theory {\bf 53} (2005), 399--429.

\bibitem{RSY1}{I. Raeburn, A. Sims and T. Yeend}, \emph{Higher-rank graphs and their $C^*$-algebras}, Proc. Edinburgh Math. Soc. \textbf{46} (2003), 99--115.


\bibitem{Robertson-Sims}
D. Robertson and A. Sims, \emph{Simplicity of $C^*$-algebras associated to row-finite locally convex higher-rank graphs},  
Israel J. Math. \textbf{172} (2009), 171--192. 

\bibitem{RobS}{G. Robertson and T. Steger}, \emph{Affine buildings, tiling systems and higher-rank Cuntz--Krieger algebras}, J. Reine Angew. Math. \textbf{513} (1999), 115--144.

\bibitem{Sen}{E. Seneta}, Non-Negative Matrices and Markov Chains, second edition, Springer-Verlag, New York, 1981.



\bibitem{Yang1} D. Yang, \emph{Endomorphisms and modular theory of 2-graph $C^*$-algebras}, Indiana Univ. Math. J. \textbf{59} (2010), 495--520.


\bibitem{Yang3} D. Yang, \emph{Factoriality and type classification of $k$-graph von Neumann algebras}, Proc. Edinburgh Math. Soc. \textbf{60} (2017), 499--518.



\end{thebibliography}
\end{document}